\begin{document}

\newcommand{\wk}{\mbox{$\,<$\hspace{-5pt}\footnotesize )$\,$}}

\numberwithin{equation}{section}
\newtheorem{teo}{Theorem}
\newtheorem{lemma}{Lemma}

\newtheorem{coro}{Corollary}
\newtheorem{prop}{Proposition}
\theoremstyle{definition}
\newtheorem{definition}{Definition}
\theoremstyle{remark}
\newtheorem{remark}{Remark}

\newtheorem{scho}{Scholium}
\newtheorem{open}{Question}
\newtheorem{example}{Example}
\numberwithin{example}{section}
\numberwithin{lemma}{section}
\numberwithin{prop}{section}
\numberwithin{teo}{section}
\numberwithin{definition}{section}
\numberwithin{coro}{section}
\numberwithin{figure}{section}
\numberwithin{remark}{section}
\numberwithin{scho}{section}

\bibliographystyle{abbrv}

\title{Convex analysis in normed spaces and metric projections onto convex bodies}
\date{}

\author{Vitor Balestro\footnote{Corresponding author}  \\ Instituto de Matem\'{a}tica e Estat\'{i}stica \\ Universidade Federal Fluminense \\ 24210201 Niter\'{o}i \\ Brazil \\ vitorbalestro@id.uff.br \and Horst Martini \\ Fakult\"{a}t f\"{u}r Mathematik \\ Technische Universit\"{a}t Chemnitz \\ 09107 Chemnitz\\ Germany \\ martini@mathematik.tu-chemnitz.de \and Ralph Teixeira \\ Instituto de Matem\'{a}tica e Estat\'{i}stica \\ Universidade Federal Fluminense \\ 24210201 Niter\'{o}i \\ Brazil \\ ralph@mat.uff.br}

\maketitle

\begin{abstract} We investigate metric projections and distance functions referring to  
convex bodies in finite-dimensional normed spaces. For this purpose we  
identify the vector space with its dual space by using, instead of  
the usual identification via the standard inner product, the Legendre  
transform associated with the given norm. This
approach yields re-interpretations of various properties of convex  
functions, and new relations between such functions and geometric  
properties of the studied norm are also derived.
\end{abstract}

\noindent\textbf{Keywords}: Birkhoff orthogonality, convex body, convex functions, (differentiability of) distance functions, Legendre transform, sub-gradient


\bigskip

\noindent\textbf{MSC 2010:} 41A50, 41A65, 46B20, 46G05, 52A20, 52A21, 53C23, 58C20.

\section{Introduction}

Let $(M,d)$ be a metric space, and let $C\subseteq M$ be a subset of $M$. The \emph{distance} of a given point $x \in M$ to $C$ is defined to be the number
\begin{align*} \mathrm{dist}(x,C) = \inf\{d(x,y):y \in C\}.
\end{align*}
If this number is finite, and if $y_0 \in C$ is such that $d(x,y_0) = \mathrm{dist}(x,C)$, then we say that $y_0$ is a \emph{metric projection} of $x$ onto $C$. 

Metric projections have been widely studied through the past  
decades and in a number of contexts (see, e.g., \cite{alimov}, \cite{asplund}, \cite{correa} and \cite{zili}, further references will be given throughout the text). A very special case is when one considers the metric projections onto a given \emph{convex body} (i.e., a compact, convex set with non-empty interior) $K$ in $\mathbb{R}^n$ endowed with the standard distance given for the Euclidean norm (which we denote by $|\cdot|$). For this case we know that, for example, \\

\noindent\textbf{(i)} $K$ is a \emph{Chebyshev set}, meaning that a metric projection onto $K$ exists and is unique for any $x \in \mathbb{R}^n$. Actually, every Chebyshev set of $\mathbb{R}^n$ is convex, see \cite{bunt} and \cite{deutsch}. For infinite-dimensional vector spaces this is more complicated, and we refer the reader to \cite{vlasov} and \cite{vlasov2},\\

\noindent\textbf{(ii)} the map $p_K\colon\mathbb{R}^n\rightarrow K$ which takes each $x \in \mathbb{R}^n$ to its metric projection onto $K$ is \emph{contracting}, meaning that $|p_K(x) - p_K(y)| \leq |x-y|$ for any $x,y \in \mathbb{R}^n$,\\

\noindent\textbf{(iii)} the distance function $\mathrm{dist}(\cdot,K)\colon\mathbb{R}^n\rightarrow\mathbb{R}$ is convex, and differentiable at $\mathbb{R}^n\setminus K$, even if $K$ is not \emph{smooth} (a convex body is said to be \emph{smooth} if it has a unique supporting hyperplane at each boundary point), \\

\noindent\textbf{(iv)} the gradient of the distance function can be described in terms of the outer normals of $K$.\\

We originally wanted to understand which of these (and further) properties remain true if we consider the metric given by an arbitrary norm in $\mathbb{R}^n$, instead of the usual norm given by the standard inner product. In this paper, we give precise answers to all the cases above except for \textbf{(ii)}, describing  
what properties the norm and/or the convex body have to satisfy such  
that the desired holds. For normed spaces, property \textbf{(ii)} was discussed in \cite{karlovitz}. We mention that regularity properties for distance functions have also been extensively investigated in various contexts (see, e.g, \cite{yanyan}, \cite{noll}, \cite{thibault} and \cite{zajicek2}). In the context of normed (or \emph{gauge}) spaces we mention the papers \cite{fitzpatrick}, \cite{giles}, \cite{giles2}, \cite{safdari} and \cite{zajicek}. However, to our best knowledge the following question was not explicitly answered: can we guarantee differentiability for the distance function to a convex body only assuming that the ambient norm is smooth and strictly convex (in the ``geometric" sense that its unit ball is smooth and strictly convex)? We answer this question positively, also relating the so-called \emph{norm gradient} of these functions to the Birkhoff orthogonality relation given by the norm. 

Seeing properties \textbf{(iii)} and \textbf{(iv)} in the context of normed  
spaces, we observed that the early theory of convex functions on $\mathbb{R}^n$ can be built \textbf{replacing the norm given by the standard inner product by a smooth and strictly convex norm}. Besides providing the complete understanding of differentiability of distance functions onto convex sets in normed spaces, we believe that this approach is interesting for itself. Using the Legendre transform of the norm, we define and study the notions of \emph{norm gradients} and \emph{norm sub-gradients} of convex functions, relating them also to the concept of Birkhoff orthogonality (to our best knowledge, this relation is also new).

We organize the text as follows: in Section \ref{convex} we recall some definitions and properties regarding the classical theory of convex functions, briefly discussing also the sub-linear functions. We begin to study metric projections onto convex sets in normed spaces in Section \ref{metric}, where we give necessary and sufficient conditions for the uniqueness (among other results). Section \ref{gradsubgrad} is devoted to introduce and investigate the so-called norm gradients and norm sub-gradients of convex functions. They are also used to ``detect" differentiability, and this is applied in Section \ref{distdiff} to study the differentiability of distance functions to convex bodies. 

To finish this introduction, we fix some basic concepts and notation. We say that a hyperplane $\mathbf{h} \subseteq \mathbb{R}^n$ \emph{supports} a given convex body $K$ at a point $x \in \partial K$ (where $\partial K$ denotes the  
\emph{boundary of} $K$) if $K$ is contained in one of the (closed) half-spaces determined when $\mathbf{h}$ is translated to pass through $x$. As it was previously mentioned, a convex body is \emph{smooth} if it is supported by a unique hyperplane at each boundary point. We also say that a convex body is \emph{strictly convex} if its boundary contains no line segment. Let $(X,||\cdot||)$ denote a \emph{normed or Minkowski space}, i.e., a finite dimensional, real Banach space. The \emph{unit ball} of $(X,||\cdot||)$ is the set $B := \{x \in X:||x|| \leq 1\}$, and it is immediate that $B$ is a convex body which is symmetric with respect to the origin. The boundary of the unit ball is the \emph{unit sphere} $\partial B = \{x \in X:||x|| = 1\}$. We say that a norm is \emph{smooth} (resp. \emph{strictly convex}) if its unit ball $B$ is a smooth convex body (resp. strictly convex body). Equivalently, a norm is strictly convex if and only if the triangle inequality is strict for linearly independent vectors (see \cite{martini1}). 

A norm $||\cdot||$ on a vector space $X$ induces an orthogonality relation which is known as \emph{Birkhoff orthogonality}. We say that a vector $x \in X$ is \emph{Birkhoff left-orthogonal} to a vector $y \in X$ if $||x+ty|| \geq ||x||$ for every $t \in \mathbb{R}$. We denote this relation by $x \dashv_B y$, and in this case we also say that $y$ is \emph{Birkhoff right-orthogonal} to $x$. It is easy to see that Birkhoff orthogonality is a homogeneous relation (meaning it is a relation between directions rather than vectors). Extending this concept, we say that a vector $x \in X$ is \emph{Birkhoff left-orthogonal to a hyperplane} $\mathbf{h}$ if $x\dashv_B z$ for every $z \in \mathbf{h}$. In this case, we also say that $\mathbf{h}$ is Birkhoff right-orthogonal to $x$, and this is similarly denoted by $x \dashv_B \mathbf{h}$. Birkhoff orthogonality between vectors and hyperplanes is related to the geometry of the unit ball: the norm is smooth if and only if each non-zero vector $x \in X$ admits a unique Birkhoff right-orthogonal hyperplane, and the norm is strictly convex if and only if each hyperplane $\mathbf{h}\subseteq X$ has a unique Birkhoff left-orthogonal direction. We refer the reader to \cite{alonso} for more information on Birkhoff orthogonality (and other orthogonality types in normed spaces).

\section{Convex functions}\label{convex}

A function $f\colon\mathbb{R}^n\rightarrow\mathbb{R}$ is said to be \emph{convex} if
\begin{align*} f(\lambda x + (1-\lambda)y) \leq \lambda f(x) + (1-\lambda)f(y)
\end{align*}
for any $x,y \in \mathbb{R}^n$ and $\lambda \in [0,1]$. Equivalently, a function $f\colon\mathbb{R}^n\rightarrow\mathbb{R}$ is convex if and only if its \emph{epigraph}
\begin{align*} \mathrm{epi}(f) = \{(x,c)\in\mathbb{R}^{n+1}:f(x) \leq c\}
\end{align*}
is a convex set. Replacing the domain by convex open subsets of ${\mathbb R}^n$, the definition can be extended, as well as the most part of our results. For the sake of simplicity, we work with functions defined over $\mathbb{R}^n$ and which only take values on $\mathbb{R}$ (and not on the extended line $\mathbb{R}\cup\{-\infty,+\infty\}$, as it is usual in the convex analysis literature).

We deal with two differentiability notions of convex functions. First, if $f\colon\mathbb{R}^n\rightarrow\mathbb{R}$ is a convex function, then we denote by $f'_+$ the \emph{one-sided directional derivative}
\begin{align*} f'_+(x,v) := \lim_{t\rightarrow 0^+}\frac{f(x+tv)-f(x)}{t},
\end{align*}
for any $x,v \in \mathbb{R}^n$. In our context (that of convex functions whose domain is $\mathbb{R}^n$), this limit always exists. Also, it is well-known that $f'_+(x,\cdot)$ is a sublinear map for each $x \in \mathbb{R}^n$. 
For proofs we refer the reader to \cite[Chapter 1]{schneider}. Letting $t \rightarrow 0^-$ in the limit above, we get the \emph{left-sided derivative} $f'_-(x,v)$. It is not difficult to see that $f'_-(x,v) = -f'_+(x,-v)$, and for that reason we work mainly with $f'_+$.

A function $f\colon U\rightarrow\mathbb{R}$, where $U\subseteq\mathbb{R}^n$ is an open set, is said to be (\emph{Fr\'{e}chet}) \emph{differentiable} at $x \in U$ if there exists a linear map $df_x\colon\mathbb{R}^n\rightarrow\mathbb{R}$ such that
\begin{align*} \lim_{||v||\rightarrow 0}\frac{|f(x+v)-f(x)-df_x(v)|}{||v||} = 0,
\end{align*}
where the limit is taken in the metric given by the norm. It is immediate to notice that this definition coincides with the differentiability definition given by the Euclidean norm. For convex functions, there are several characterizations of Fr\'{e}chet differentiability, and next we state the ones which are important for us.

\begin{prop}\label{diffconvexchar} Let $f\colon\mathbb{R}^n\rightarrow\mathbb{R}$ be a convex function, and let $x \in \mathbb{R}^n$. For simplicity, we write $c := f(x)$. The  following statements are equivalent:\\

\noindent\textbf{\emph{(i)}} $f$ is Frech\'{e}t differentiable at $x$,\\

\noindent\textbf{\emph{(ii)}} the one-sided derivative $f'_+(x,\cdot)$ is a linear map, and\\

\noindent\textbf{\emph{(iii)}} $\mathrm{epi}(f)$ is supported by a unique hyperplane at $(x,c)$.\\

\noindent In this case, we have that $f'_+(x,\cdot) = df_x(\cdot)$.
\end{prop}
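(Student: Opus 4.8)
The plan is to prove the cycle by establishing (i) $\Rightarrow$ (ii), then (ii) $\Rightarrow$ (i), and finally the equivalence (ii) $\Leftrightarrow$ (iii), using the subdifferential as a bridge between the directional derivative and the supporting hyperplanes of the epigraph. The implication (i) $\Rightarrow$ (ii), together with the final identity, is the routine direction: if $f$ is Fr\'echet differentiable at $x$, then for a fixed direction $v$ and $t>0$ one has $f(x+tv)-f(x) = t\,df_x(v) + o(t)$, so dividing by $t$ and letting $t\to 0^+$ gives $f'_+(x,v) = df_x(v)$. Hence $f'_+(x,\cdot)$ coincides with the linear map $df_x$, which simultaneously proves the concluding statement of the proposition.

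The heart of the matter is (ii) $\Rightarrow$ (i), and here I would exploit that $f$, being convex and finite on all of $\mathbb{R}^n$, is locally Lipschitz. Writing $L := f'_+(x,\cdot)$, which is linear by hypothesis, and setting $\phi(v) := f(x+v) - f(x) - L(v)$, one checks that $\phi$ is convex, that $\phi(0)=0$, that all directional derivatives of $\phi$ at $0$ vanish (whence $\phi \ge 0$ by the supporting inequality for convex functions), and that $\phi$ is locally Lipschitz near $0$. The goal is to show $\phi(v)/\|v\| \to 0$ as $v\to 0$, which is exactly Fr\'echet differentiability with $df_x = L$. The key structural fact is that for each unit vector $u$ the difference quotient $s \mapsto \phi(su)/s$ is nondecreasing in $s>0$ and decreases to $\phi'_+(0,u)=0$ as $s\downarrow 0$; the task is to upgrade this direction-by-direction statement to uniform convergence over the compact unit sphere. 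Concretely, I would set $\omega(r) := \sup_{\|v\|=r}\phi(v)/r$, note that $\omega$ is nondecreasing, and argue by contradiction: were $\lim_{r\to 0^+}\omega(r)$ strictly positive, a sequence of unit vectors realizing the suprema would subconverge to some $u^*$, and continuity of $\phi$ (from local Lipschitzness) would transfer the smallness of $\phi(s_0 u^*)/s_0$ to nearby directions, which via the monotonicity of the difference quotients contradicts that $\phi/\|\cdot\|$ stays bounded away from $0$ at small scales. I expect this uniformization — converting purely directional information into a genuine Fr\'echet estimate — to be the main obstacle, precisely because for general functions Gateaux differentiability does not imply Fr\'echet differentiability; convexity and finite-dimensionality are exactly what rescue us.

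Finally, I would treat (ii) $\Leftrightarrow$ (iii) through the subdifferential $\partial f(x)$, the nonempty compact convex set of vectors $p$ satisfying $f(y) \ge f(x) + \langle p, y-x\rangle$ for all $y$. Since $f$ is finite everywhere, the epigraph admits no vertical supporting hyperplane at $(x,c)$, so the supporting hyperplanes of $\mathrm{epi}(f)$ at $(x,c)$ correspond bijectively to the elements of $\partial f(x)$. Moreover, $f'_+(x,\cdot)$ is the support function of $\partial f(x)$, and a support function of a compact convex set is linear if and only if that set is a single point. Combining these, $f'_+(x,\cdot)$ is linear if and only if $\partial f(x)$ is a singleton if and only if $\mathrm{epi}(f)$ has a unique supporting hyperplane at $(x,c)$, which closes the equivalence.
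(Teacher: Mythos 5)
Your proof is correct, but the comparison with the paper is lopsided in an unusual way: the paper offers no argument at all for this proposition --- its ``proof'' is a single citation to Chapter 1 of Schneider's book --- so what you have written is in effect a self-contained reconstruction of the textbook material being cited. Both nontrivial ingredients are handled soundly. For (ii) $\Rightarrow$ (i), the reduction to $\phi(v) := f(x+v)-f(x)-L(v) \geq 0$ followed by the compactness argument (monotonicity of the difference quotients $s \mapsto \phi(su)/s$ along rays, local Lipschitz continuity of $\phi$ to transfer smallness at scale $s_0$ from the limiting direction $u^*$ to nearby directions, then monotonicity again to push that smallness down to all smaller scales) is exactly the standard finite-dimensional upgrade from Gateaux to Fr\'{e}chet differentiability of convex functions; you correctly identify this as the step where convexity and compactness of the unit sphere are indispensable, and the sketch contains every piece needed to write it out in full. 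For (ii) $\Leftrightarrow$ (iii), you import the max formula --- that $f'_+(x,\cdot)$ is the support function of the nonempty compact convex set $\partial f(x)$ --- which is a borrowed theorem of depth comparable to the statement being proved; it is, incidentally, the Euclidean case of the paper's own Theorem 4.2, which is proved there from the sublinearity lemmas rather than from this proposition, so there is no circularity, but you should flag it explicitly as an imported result rather than fold it silently into the argument. The remaining bookkeeping --- the bijection between $\partial f(x)$ and the non-vertical supporting hyperplanes of $\mathrm{epi}(f)$ at $(x,c)$, and the exclusion of vertical supporting hyperplanes because $f$ is finite on all of $\mathbb{R}^n$ --- is correct as stated. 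In short: the paper's citation buys brevity, your route buys a proof the paper never actually gives.
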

\begin{proof}
Again, for the proofs we refer to \cite[Chapter 1]{schneider}.

\end{proof}

In the points where $f$ is not differentiable, we still can guarantee that the one-sided derivative $f'_+(x,\cdot)\colon\mathbb{R}^n\rightarrow\mathbb{R}$ is \emph{sub-linear}, meaning that it has the following properties: \\

\noindent\textbf{i. (sub-additivity)} $f'_+(x,u+v) \leq f'_+(x,u)+f'_+(x,v)$ for every $u,v \in\mathbb{R}^n$, and\\

\noindent\textbf{ii. (positive homogeneity)} $f'_+(x,\lambda u) = \lambda f'_+(x,u)$ for any $u \in \mathbb{R}^n$ and every $\lambda \geq 0$. \\

The reader may carefully notice that sub-linear functions are, in particular, convex. Hence, if $g\colon\mathbb{R}^n\rightarrow\mathbb{R}$ is a sub-linear function, then it has well-defined one-sided derivatives $g'_+(x,u)$ for any $x,u \in \mathbb{R}$, and the maps $g'_+(x,\cdot)$ are also sub-linear. A \emph{linearity direction} of a sub-linear function $g$ is a vector $u \in \mathbb{R}^n$ for which $g(u) = -g(-u)$. The set of linearity directions of $g$ is denoted by $\mathrm{lin}(g)$. 

The next lemma establishes some important properties of sub-linear functions that we will use later. The proof is given in \cite[Chapter 1]{schneider}.

\begin{lemma}\label{sublinearproperties} Let $g\colon\mathbb{R}^n\rightarrow\mathbb{R}$ be a sub-linear function. The following statements hold:\\

\noindent\textbf{\emph{(a)}} $g'_+(x,\lambda x) = \lambda g(x)$ for any $x \in \mathbb{R}^n$ and $\lambda \in \mathbb{R}$, \\

\noindent\textbf{\emph{(b)}} $g'_+(x,u) \leq g(u)$ for any $x,u \in \mathbb{R}^n$,\\

\noindent\textbf{\emph{(c)}} $\mathrm{lin}(g)$ is a vector subspace of $\mathbb{R}^n$, \\

\noindent\textbf{\emph{(d)}} $\mathrm{lin}(g'_+(x,\cdot)) \supseteq \mathrm{lin}(g) + \mathrm{span}(x)$ for every $x \in \mathbb{R}^n$,  \\

\noindent\textbf{\emph{(e)}} $\mathrm{lin}(g) = \mathbb{R}^n$ if and only if $g$ is linear. 
\end{lemma}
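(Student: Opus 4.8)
The plan is to derive all five parts from a single elementary observation together with direct computation in the difference quotient. First I would record two consequences of sub-linearity: positive homogeneity forces $g(0) = 0$, and then sub-additivity yields the \emph{fundamental inequality} $0 = g(0) = g(u + (-u)) \leq g(u) + g(-u)$ for every $u$. This identifies $\mathrm{lin}(g)$ precisely as the set on which this inequality becomes an equality, which is exactly the leverage needed for the ``squeeze'' arguments in parts (c)--(e). Parts (a) and (b) are then immediate from the definition of $g'_+$. For (a), substituting $\lambda x$ and using that $1 + t\lambda > 0$ for $t > 0$ small, positive homogeneity gives $g(x + t\lambda x) = (1 + t\lambda)g(x)$, so the difference quotient equals $\lambda g(x)$ identically and the limit follows. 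For (b), sub-additivity and homogeneity give $g(x + tu) \leq g(x) + tg(u)$ for $t > 0$, so the quotient is bounded above by $g(u)$.

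For part (c) I would verify the subspace axioms. Membership of $0$ is clear, and closure under scalar multiplication follows from positive homogeneity after splitting into the cases $c \geq 0$ and $c < 0$ (in the latter rewriting $g(cu) = |c|g(-u)$ and $g(-cu) = |c|g(u)$). The only nontrivial axiom is closure under addition, and here the squeeze appears: for $u,v \in \mathrm{lin}(g)$, applying sub-additivity to both $u+v$ and $-(u+v)$ gives $g(u+v) + g(-(u+v)) \leq (g(u)+g(-u)) + (g(v)+g(-v)) = 0$, while the fundamental inequality supplies the reverse, forcing $u+v \in \mathrm{lin}(g)$.

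Part (d) ties the previous parts together, and this is the step I expect to require the most care in assembling. Since $g'_+(x,\cdot)$ is again sub-linear, part (c) makes $\mathrm{lin}(g'_+(x,\cdot))$ a subspace, so it suffices to show it contains both $\mathrm{span}(x)$ and $\mathrm{lin}(g)$. The inclusion of $x$ is read off directly from part (a), which gives $g'_+(x,x) = g(x) = -g'_+(x,-x)$. The inclusion of $\mathrm{lin}(g)$ is one more squeeze: for $u \in \mathrm{lin}(g)$, part (b) gives $g'_+(x,u) + g'_+(x,-u) \leq g(u) + g(-u) = 0$, and the fundamental inequality applied to the sub-linear map $g'_+(x,\cdot)$ gives the reverse. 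For part (e), the backward direction is immediate from the definition, while for the forward direction $\mathrm{lin}(g) = \mathbb{R}^n$ says $g$ is odd; oddness upgrades positive homogeneity to full homogeneity, and combining $g(u) = g((u+v)+(-v)) \leq g(u+v) + g(-v) = g(u+v) - g(v)$ with sub-additivity yields $g(u+v) = g(u)+g(v)$, so $g$ is linear.

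Every step is elementary; the one conceptual point I would emphasize is that the inequality $g(u) + g(-u) \geq 0$, paired each time with its mirror coming from sub-additivity, is the single mechanism producing the equalities in (c), (d), and (e). Once this is isolated, the remaining work in each part is a two-line computation.
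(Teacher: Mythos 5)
Your proof is correct, and it is necessarily a different route from the paper's, because the paper gives no proof at all: it delegates the entire lemma to \cite[Chapter 1]{schneider}. Your argument is a complete, self-contained substitute, and the details check out. The fundamental inequality $0 = g(0) \leq g(u) + g(-u)$ does identify $\mathrm{lin}(g)$ as the equality set, and each squeeze is sound: in (c), sub-additivity applied to $\pm(u+v)$ against that inequality; in (d), part (b) applied to $\pm u$ against the same inequality for the sub-linear map $g'_+(x,\cdot)$ (whose sub-linearity you are entitled to import, since the paper states it in the discussion preceding the lemma, together with the existence of the one-sided derivatives); in (e), the decomposition $u = (u+v) + (-v)$ closing the gap left by sub-additivity. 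The case split for scalar multiples in (c), the identity $g'_+(x,x) = g(x) = -g'_+(x,-x)$ giving $\mathrm{span}(x) \subseteq \mathrm{lin}(g'_+(x,\cdot))$ in (d), and the upgrade from oddness to full homogeneity in (e) are all correct; and your observation that a subspace containing $\mathrm{lin}(g)$ and $\mathrm{span}(x)$ contains their sum legitimately finishes (d). What your approach buys is transparency and self-containment: the whole lemma is exhibited as repeated use of one mechanism (the equality case of $g(u) + g(-u) \geq 0$), whereas the paper's citation hides the argument in a reference whose treatment runs through the general theory of support functions. The only cost is length, which here is negligible.
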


If $f$ is a convex function, then it is easy to notice that for each $c\in\mathbb{R}$ the \emph{sub-level set} $\{f \leq c\} := \{z \in \mathbb{R}^n:f(z) \leq c\}$ is convex. Despite being simple, the next proposition provides important information about the structure of the sub-level sets of convex functions (namely, a point which is not a global minimizer lies in the boundary of some sub-level set).

\begin{prop}\label{convexsublevel} Let $f\colon\mathbb{R}^n\rightarrow\mathbb{R}$ be a convex function and assume that $f(x) = c$. Then precisely one of the following statements holds:\\

\noindent\textbf{\emph{(i)}} $\{f < c\} \neq \emptyset$ and $x \in \partial \{f \leq c\}$, \\

\noindent\textbf{\emph{(ii)}} $c$ is the global minimum of $f$.
\end{prop}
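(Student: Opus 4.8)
The plan is to split into the obvious dichotomy --- whether or not $c$ is the global minimum of $f$ --- and to dispatch the ``precisely one'' part by first checking that the two alternatives are mutually exclusive. I would begin by observing that (i) and (ii) cannot hold simultaneously: statement (ii) says $f(z) \geq c$ for all $z$, which forces $\{f < c\} = \emptyset$ and hence contradicts the first clause of (i); conversely, if (i) holds then $\{f < c\} \neq \emptyset$ provides a point at which $f$ is strictly below $c$, so $c$ cannot be the global minimum. Thus at most one of the two statements holds, and it remains to show that at least one of them always does.

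If $c$ is the global minimum, then (ii) holds and we are done. So I would assume $c$ is not the global minimum; then there is some $y \in \mathbb{R}^n$ with $f(y) < c$, which immediately gives $\{f < c\} \neq \emptyset$, the first clause of (i). Since $f(x) = c$, the point $x$ already lies in the sublevel set $\{f \leq c\}$, so in order to prove $x \in \partial \{f \leq c\}$ it suffices to show that $x$ is \emph{not} interior to $\{f \leq c\}$; equivalently, that every neighborhood of $x$ contains points where $f > c$.

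The key step is a one-dimensional convexity argument along the line through $x$ and $y$. I would set $g(s) := f(x + s(y - x))$, which is a convex function of the single real variable $s$ with $g(0) = c$ and $g(1) < c$. For each $s < 0$ the value $g(0)$ is a convex combination of $g(1)$ and $g(s)$: writing $0 = \lambda \cdot 1 + (1 - \lambda)s$ with $\lambda = -s/(1-s) \in (0,1)$, convexity gives $g(0) \leq \lambda g(1) + (1 - \lambda) g(s)$, and rearranging this together with $g(1) < c = g(0)$ yields $g(s) > c$ for every $s < 0$. Hence the points $x + s(y - x)$ with $s < 0$, which approach $x$ as $s \to 0^-$, all lie outside $\{f \leq c\}$. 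Therefore $x$ is not interior to $\{f \leq c\}$, so $x \in \partial \{f \leq c\}$ and (i) holds.

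I expect the algebraic rearrangement in this last step to be the only place requiring care: one must keep track of the fact that $1 - \lambda > 0$, so that dividing by it preserves the direction of the inequality, and one must use the strict inequality $g(1) < c$ (rather than $\leq$) precisely to upgrade the conclusion from $g(s) \geq c$ to the strict $g(s) > c$ that is needed to exclude $x$ from the interior. Everything else follows directly from the convexity of the one-variable restriction $g$.
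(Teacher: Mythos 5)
Your proof is correct and follows essentially the same route as the paper: both arguments restrict $f$ to the line through $x$ and a point of $\{f<c\}$ and use the convexity inequality to show $f>c$ on the ray beyond $x$. The only (harmless) difference is that you use $x$ itself as the witness that every neighborhood meets $\{f\leq c\}$, whereas the paper additionally exhibits nearby points of $\{f<c\}$, which it then uses in its closing remark about $\mathrm{int}\{f\leq c\}=\{f<c\}$.
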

\begin{proof} If $c$ is not the minimum of $f$, then there exists a point $z \in \mathbb{R}^n$ such that $f(z) < c = f(x)$. To prove that $x \in \partial \{f\leq c\}$, let $\varepsilon > 0$ and put $w = x-z$. Then
\begin{align*} f(x-\varepsilon w) = f((1-\varepsilon)x + \varepsilon z) \leq (1-\varepsilon)f(x) + \varepsilon f(z) < c,
\end{align*}
from where we get that $x-\varepsilon w \in B(x,\varepsilon)\cap\{f < c\}$. On the other hand, writing
\begin{align*} x = \frac{1}{1+\varepsilon}(x+\varepsilon w) + \frac{\varepsilon}{1+\varepsilon}z\,,
\end{align*}
we get
\begin{align*} c = f(x) \leq \frac{1}{1+\varepsilon}f(x+\varepsilon w) + \frac{\varepsilon}{1+\varepsilon}f(z) < \frac{1}{1+\varepsilon}f(x+\varepsilon w) + \frac{\varepsilon}{1+\varepsilon}c,
\end{align*}
and this leads to $f(x+\varepsilon w) > c$. Hence $x + \varepsilon w \in B(x,\varepsilon)\cap\{f > c\}$. Since this holds for any $\varepsilon > 0$, it follows that any neighborhood of $x$ contains points which lie inside and outside of $\{f\leq c\}$. Hence $x \in \partial \{f\leq c\}$.

On the other hand, it is clear that if $c$ is the global minimum of $f$, then $\{f < c\} = \emptyset$. As a remark, notice that by continuity we have that $\{f < c\}$ is an open (convex) set. Moreover, observe that if $c$ is not a global minimum, then $\mathrm{int}\{f \leq c\} = \{f < c\}$ and $\{f = c\} = \partial \{f < c\}$.

\end{proof}

\begin{prop} Let $f\colon\mathbb{R}^n\rightarrow\mathbb{R}$ be a convex function, and assume that $f$ is differentiable at $x \in \mathbb{R}^n$. If $f(x) = c$ is not a global minimum of $f$, then the sub-level set $\{f\leq c\}$ is supported by a unique hyperplane at $x$.
\end{prop}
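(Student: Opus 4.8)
The plan is to exhibit the gradient-induced hyperplane as the unique supporting hyperplane. First I would record that, since $c=f(x)$ is not a global minimum, Proposition \ref{convexsublevel} places $x$ on $\partial\{f\leq c\}$ and guarantees $\{f<c\}\neq\emptyset$, so asking for a supporting hyperplane at $x$ is meaningful. Next I would observe that the derivative $df_x$ cannot vanish: were $df_x=0$, the subgradient inequality $f(z)\geq f(x)+df_x(z-x)$ valid for differentiable convex functions would force $f(z)\geq f(x)=c$ for all $z$, making $c$ a global minimum, contrary to hypothesis. Thus $df_x\neq 0$, and the candidate hyperplane $\mathbf{h}:=\{z\in\mathbb{R}^n:df_x(z-x)=0\}$ is well-defined.

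To see that $\mathbf{h}$ supports $\{f\leq c\}$ at $x$, I would again invoke the subgradient inequality: for any $z$ with $f(z)\leq c$ we get $df_x(z-x)\leq f(z)-f(x)\leq 0$, so the entire sub-level set lies in the closed half-space $\{z:df_x(z-x)\leq 0\}$ bounded by $\mathbf{h}$. Existence being settled this way, the heart of the matter is uniqueness.

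For uniqueness I would argue that the outer normal of any supporting hyperplane at $x$ is forced to be a positive multiple of $df_x$. Suppose $\ell$ is a nonzero linear functional with $\{f\leq c\}\subseteq\{z:\ell(z-x)\leq 0\}$. Using Proposition \ref{diffconvexchar}, differentiability gives $f'_+(x,v)=df_x(v)$, so whenever $df_x(v)<0$ the point $x+tv$ satisfies $f(x+tv)<c$ for small $t>0$; membership in the sub-level set then yields $\ell(v)\leq 0$. A short limiting step—perturbing $v$ by a direction $w$ with $df_x(w)<0$ and letting the perturbation tend to zero—upgrades this to the implication $df_x(v)\leq 0\Rightarrow\ell(v)\leq 0$ for all $v$. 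Applying this to both $v$ and $-v$ on $\ker df_x$ shows that $\ell$ vanishes there, whence $\ker df_x\subseteq\ker\ell$ and therefore $\ell=\lambda\,df_x$ for some scalar $\lambda$; the sign condition then forces $\lambda>0$. Consequently the supporting hyperplane $\{z:\ell(z-x)=0\}$ coincides with $\mathbf{h}$.

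I expect the main obstacle to be this last normal-cone computation, and in particular the passage from the strict inequality $df_x(v)<0$ to the non-strict one $df_x(v)\leq 0$, which is precisely what collapses the cone of admissible normals to a single ray; the remaining steps are a routine application of the subgradient inequality together with the linearity of $f'_+(x,\cdot)$ granted by differentiability.
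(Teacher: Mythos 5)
Your proof is correct, but it runs in essentially the opposite direction from the paper's and is worth comparing. The paper takes an \emph{arbitrary} supporting hyperplane $\mathbf{h}$ of $\{f\leq c\}$ at $x$ and shows that $df_x$ vanishes on every direction $z$ of $\mathbf{h}$: points $x+\varepsilon z$ lie on $\mathbf{h}$, hence outside $\mathrm{int}\{f\leq c\}=\{f<c\}$, so $f'_+(x,\pm z)\geq 0$, and the linearity of $f'_+(x,\cdot)$ granted by Proposition \ref{diffconvexchar} forces $df_x(z)=0$; if there were two distinct supporting hyperplanes, $df_x$ would vanish on a spanning set of $\mathbb{R}^n$, giving $df_x=0$ and contradicting non-minimality. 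Thus the paper proves the inclusion ``direction space of any supporting hyperplane $\subseteq\ker df_x$'' and finishes with a dimension count, leaving the \emph{existence} of a supporting hyperplane to the standard supporting hyperplane theorem for the convex set $\{f\leq c\}$ at its boundary point $x$ (Proposition \ref{convexsublevel}). You instead exhibit the candidate $x+\ker df_x$ explicitly, verify support via the subgradient inequality, and obtain uniqueness from the \emph{reverse} inclusion $\ker df_x\subseteq\ker\ell$ for any supporting functional $\ell$, via the normal-cone computation: strict descent directions ($df_x(v)<0$) lead into the sub-level set, forcing $\ell(v)\leq 0$, and your perturbation-and-limit step ($v+\epsilon w$ with $df_x(w)<0$, $\epsilon\to 0^+$) correctly upgrades this to all $v$ with $df_x(v)\leq 0$. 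Your route is longer but buys two things: existence of the supporting hyperplane comes for free, with the hyperplane identified concretely as the kernel of $df_x$, and the argument is in effect a Euclidean-free special case of the sub-differential/normal-cone correspondence the paper only develops later (Theorems \ref{normsubgradgeom} and \ref{subgradientchar}). The paper's argument is shorter, needing only the sign of one-sided derivatives along supporting directions and no separation machinery beyond the classical existence statement.
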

\begin{proof} Let $\mathbf{h}$ be a supporting hyperplane of $f$ at $x$, and let $z \in \mathbf{h}$. Observe that $f(x+\varepsilon z) \geq f(x)$ for \textbf{every} $\varepsilon \in \mathbb{R}$, meaning that $f$ grows in both directions $z$ and $-z$. That is,
\begin{align*} f'_+(x,\pm z) \geq 0.
\end{align*}
On the other hand, since $f$ is differentiable at $x$, we have
\begin{align*} 0 \leq f'_+(x,z) = df_x(z) = f'_-(x,z) = -f'_+(x,-z) \leq 0,
\end{align*}
from which it follows that $df_x(z) = 0$. This shows that $df_x$ vanishes in any direction which supports $\{f \leq c\}$ at $x$. Consequently, if $\{f \leq c\}$ has more than one supporting hyperplane at $x$, then $df_x = 0$, and this contradicts the fact that $f(x) = c$ is not a global minimum. 

\end{proof}

\section{The metric projection}\label{metric}

Roughly speaking, the \emph{metric projection} of a point $x$ onto a convex body $K$ is the point of $\partial K$ where the minimum distance from $x$ to points from $K$ is attained. For the Euclidean case, this concept is studied in \cite[Chapter 1]{schneider}, and in this section we extend the ideas presented there to Minkowski spaces. If $(X,||\cdot||)$ is a normed space, then the \emph{distance} from a point $x \in X$ to a set $C\subseteq X$ is defined as
\begin{align*} \mathrm{dist}(x,C) = \inf\{||x-y||:y \in C\}.
\end{align*}
We are mainly interested in studying metric projections of points onto convex bodies. The reader must be aware of the fact that, in general, strict convexity and smoothness are not assumed, neither for the norm nor for the considered convex body.

\begin{teo}\label{metricprojectionteo1} Let $(X,||\cdot||)$ be a finite-dimensional vector space, and let $K\subseteq X$ be a convex body. For each point $x \in X$, there exists a point $p_K(x) \in K$ such that
\begin{align*} \mathrm{dist}(x,K) = ||x - p_K(x)|| \leq ||x-y||
\end{align*}
for any $y \in K$. Moreover, if $x \notin K$, then $p_K(x) \in \partial K$, and the vector $x - p_K(x)$ is (left) Birkhoff orthogonal to some supporting hyperplane of $K$ at $p_K(x)$.
\end{teo}
\begin{remark} The point $p_K(x)$ is called a \emph{metric projection} of $x$ onto $K$. Of course, if $x \in K$, then $p_K(x) = x$.
\end{remark}
\begin{proof} The existence of $p_K(x)$ comes straightforwardly from the compactness of $K$ and from the continuity of the norm. It is also clear that if $x \notin K$, then $p_K(x) \in \partial K$. Indeed, if $x \notin K$ and $y_0 \in \mathrm{int}K$, then the segment connecting $x$ to $y_0$ cuts the boundary of $K$ (at a point $z_0$, say) with $||x - y_0|| > ||x - z_0||$.

For the other claim, assume that $x \notin K$ and write $\rho:=\mathrm{dist}(x,K) = ||x-p_K(x)||$. Then $K$ and $B(x,\rho)$ are convex bodies which clearly intersect along their boundaries, that is,
\begin{align*} K\cap B(x,\rho) \subseteq \partial K\cap \partial B(x,\rho).
\end{align*}
In fact, if this does not hold, then we would have $\mathrm{dist}(x,K) < \rho$. It follows that $\mathrm{int}(K)\cap\mathrm{int}(B(x,\rho)) = \emptyset$, and hence $\mathrm{int}(K)$ and $\mathrm{int}(B(x,\rho))$ can be (properly) separated by some hyperplane $\mathbf{h}$ (see \cite[Theorem 1.3.8]{schneider}). Since we clearly have that $p_K(x) \in \mathbf{h}$, we get that $\mathbf{h}$ supports both $B(x,\rho)$ and $K$ at $p_K(x)$. The fact that $\mathbf{h}$ supports $B(x,\rho)$ at $p_K(x)$ gives that $x-p_K(x)$ is Birkhoff left-orthogonal to $\mathbf{h}$ (translated to pass through the origin).

\end{proof}

From now on, we sometimes will denote the distance function $\mathrm{dist}(\cdot,C)\colon X\rightarrow\mathbb{R}$ to a given non-empty compact set $C$ by $d_C(\cdot)$. In general, we work with convex bodies, but some results are still true if we only demand compactness.

\begin{prop}\label{distanceconvex} Let $K\subseteq X$ be a convex body. The distance function $d_K\colon X\rightarrow\mathbb{R}$ is a convex function.
\end{prop}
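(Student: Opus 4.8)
The plan is to exploit two convexity ingredients: the convexity of the body $K$, and the convexity of the norm itself, which is a direct consequence of the triangle inequality together with positive homogeneity. Concretely, I would fix two points $x_1,x_2 \in X$ and a scalar $\lambda \in [0,1]$, and then bound $d_K(\lambda x_1 + (1-\lambda)x_2)$ from above by exhibiting a single explicit competitor point lying in $K$. Producing such a competitor is the whole idea of the argument.

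The natural competitor comes from the metric projections. By Theorem \ref{metricprojectionteo1} there exist nearest points $p_K(x_1),p_K(x_2) \in K$ realizing $d_K(x_1) = ||x_1 - p_K(x_1)||$ and $d_K(x_2) = ||x_2 - p_K(x_2)||$. Since $K$ is convex, the point $\lambda p_K(x_1) + (1-\lambda)p_K(x_2)$ again belongs to $K$, so it is an admissible point against which the distance of $\lambda x_1 + (1-\lambda)x_2$ to $K$ may be estimated. This reduces the statement to a single norm inequality, which I would carry out as follows:
\begin{align*}
d_K(\lambda x_1 + (1-\lambda)x_2) &\leq ||\lambda x_1 + (1-\lambda)x_2 - (\lambda p_K(x_1) + (1-\lambda)p_K(x_2))|| \\
&= ||\lambda(x_1 - p_K(x_1)) + (1-\lambda)(x_2 - p_K(x_2))|| \\
&\leq \lambda||x_1 - p_K(x_1)|| + (1-\lambda)||x_2 - p_K(x_2)|| \\
&= \lambda d_K(x_1) + (1-\lambda)d_K(x_2).
\end{align*}
Here the first inequality is simply the definition of the distance (the middle point lies in $K$), the second combines the triangle inequality with positive homogeneity of the norm, and the final equality uses the defining property of the projections. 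Since $x_1,x_2$ and $\lambda$ were arbitrary, this is exactly the convexity of $d_K$.

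As for difficulties, I expect essentially no serious obstacle: everything rests on the existence of metric projections (already established) and on the convexity of the norm. The only point requiring a little care is that the whole argument relies on having \emph{actual} nearest points in $K$; if one wished to avoid invoking Theorem \ref{metricprojectionteo1}, the same estimate could instead be run with arbitrary $y_1,y_2 \in K$ in place of $p_K(x_1),p_K(x_2)$ and then completed by passing to the infimum over all such pairs, which works even when $K$ is merely a nonempty convex set rather than a compact body.
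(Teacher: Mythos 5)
Your proposal is correct and is essentially identical to the paper's own proof: both take the convex combination of the metric projections $p_K(x_1)$ and $p_K(x_2)$ as the competitor point in $K$ and conclude via the triangle inequality and homogeneity of the norm. Your closing remark about replacing projections by arbitrary points of $K$ and passing to the infimum is a nice observation (it removes the dependence on compactness), but it is not needed here.
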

\begin{proof} Let $x,y \in X$ and $\lambda \in (0,1)$. Since $K$ is convex, the segment connecting $p_K(x)$ and $p_K(y)$ is contained in $K$, and hence
\begin{align*} d_K((1-\lambda)x+\lambda y) \leq ||(1-\lambda)x+\lambda y - ((1-\lambda)p_K(x)+\lambda p_K(y))|| \leq \\ \leq (1-\lambda)||x-p_K(x)|| + \lambda ||y-p_K(y)|| = (1-\lambda)d_K(x) + \lambda d_K(y),
\end{align*}
proving that $d_K$ is convex.

\end{proof}

\begin{lemma} Let $C\subseteq X$ be a non-empty compact set. The distance function $d_C\colon X\rightarrow\mathbb{R}$ is a weak contraction, that is, we have
\begin{align*} |d_C(x) - d_C(y)| \leq ||x-y||
\end{align*}
for any $x,y \in X$.
\end{lemma}
\begin{proof} Without loss of generality, assume that $d_C(y) \geq d_C(x)$, and write $y = x + z$. Let $p_C(x)$ and $p_C(y)$ be metric projections of $x$ and $y$ onto $C$, respectively. Then we have
\begin{align*} |d_C(y) - d_C(x)| = d_C(y) - d_C(x) = ||y - p_C(y)|| - ||x-p_C(x)|| = \\= ||x+z - p_C(x+z)|| - ||x-p_C(x)|| \leq ||x+z - p_C(x)||-||x-p_C(x)|| \leq \\ \leq ||z|| = ||x-y||,
\end{align*}
and this concludes the proof.

\end{proof}

Next we discuss uniqueness of the metric projection of a given point. As it is explained now, we have no uniqueness only in the case that $B_X$ is not strictly convex and $\partial K$ contains a line segment which is parallel to some line segment of $\partial B_X$. The metric projection onto $K$ is said to be \emph{well-defined} if it is unique for each $x \in X$.

\begin{prop} If $(X,||\cdot||)$ is a normed space and $K\subseteq X$ is a convex body, then a metric projection $p_K(x)$ of an exterior point $x \notin K$ is not unique if and only if $p_K(x)$ is contained in some non-degenerate line segment of $\partial K$ which is parallel to some non-degenerate line segment in the boundary $\partial B_X$ of the unit ball.
\end{prop}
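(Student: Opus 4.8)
The statement characterizes non-uniqueness of the metric projection via parallel line segments on $\partial K$ and $\partial B_X$. The plan is to prove both implications by reducing everything to the single key fact established in Theorem~\ref{metricprojectionteo1}: if $q$ is a metric projection of $x$ onto $K$ with $\rho = \mathrm{dist}(x,K)$, then $\partial K$ and $\partial B(x,\rho)$ touch at $q$, and more generally the set of metric projections is exactly $K \cap \partial B(x,\rho)$. The first step I would carry out is to record this reformulation: $q$ is a metric projection of $x$ if and only if $\|x-q\| = \rho$ and $q \in K$, so the set of all metric projections of $x$ is precisely $K \cap \partial B(x,\rho)$, which is a convex set (an intersection of a convex body with the boundary of a ball, where the ball's boundary meets $K$ only in a supporting configuration). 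Non-uniqueness thus means exactly that this intersection contains more than one point, hence (being convex) contains a non-degenerate segment.

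For the forward direction, suppose $p_K(x)$ is not unique. Then there is a second metric projection $p'$, and by convexity of $K \cap \partial B(x,\rho)$ the whole segment $[p_K(x), p']$ lies in $K \cap \partial B(x,\rho)$. This segment is a non-degenerate line segment contained in $\partial K$; I must show it is parallel to a segment of $\partial B_X$. Since each point of the segment lies on $\partial B(x,\rho)$, which is just a translate and scaling of $\partial B_X$, the segment $[p_K(x),p']$ corresponds to the segment $\big[\tfrac{1}{\rho}(p_K(x)-x),\tfrac{1}{\rho}(p'-x)\big]$ lying on $\partial B_X$. These two segments differ only by the affine map $y \mapsto \tfrac{1}{\rho}(y-x)$, which is a translation composed with a homothety, so they are parallel. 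Hence $[p_K(x),p']$ is a non-degenerate segment of $\partial K$ parallel to a non-degenerate segment of $\partial B_X$, as required.

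For the converse, assume $p_K(x)$ lies on a non-degenerate segment $[a,b] \subseteq \partial K$ that is parallel to a non-degenerate segment $[u,v] \subseteq \partial B_X$. I would use the supporting hyperplane $\mathbf{h}$ furnished by Theorem~\ref{metricprojectionteo1}, which simultaneously supports $K$ and $B(x,\rho)$ at $p_K(x)$ and is Birkhoff right-orthogonal to $x - p_K(x)$. The direction of $[a,b]$ spans a direction in the supporting hyperplane of $B(x,\rho)$ at $p_K(x)$ (because the parallel segment $[u,v]$ lies on $\partial B_X$ and the two balls share the common supporting direction through Birkhoff orthogonality); the task is to move a small distance $\delta$ from $p_K(x)$ along this direction, landing at a point $p'$, and verify that $p'$ still lies in $K \cap \partial B(x,\rho)$, giving a second projection. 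Staying in $K$ is immediate for $\delta$ small since $[a,b]\subseteq K$. Staying on $\partial B(x,\rho)$ is where the parallelism of the segments is used: the segment of $\partial B_X$ parallel to $[a,b]$ guarantees that moving along this direction keeps $\|x - p'\| = \rho$ exactly (the sphere is flat in that direction). The main obstacle is this last verification — making precise that a segment of $\partial B_X$ parallel to the movement direction forces $\|x-p'\|$ to stay constant, which requires carefully tracking how the flat face of $\partial B_X$ transports to a flat face of $\partial B(x,\rho)$ under the homothety, and matching the direction of $[a,b]$ to a genuine linearity direction of the norm rather than merely to a tangent direction.
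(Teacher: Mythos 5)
Your forward implication is correct and is essentially the paper's own argument: the set of metric projections of $x$ is $K\cap B(x,\rho)$, which is convex and contained in $\partial K\cap\partial B(x,\rho)$, so two distinct projections span a non-degenerate segment lying in both boundaries, and the homothety $y\mapsto\tfrac{1}{\rho}(y-x)$ carries it to a parallel segment of $\partial B_X$. It is worth knowing that this is \emph{all} the paper proves: its proof handles only the ``non-unique $\Rightarrow$ parallel segments'' direction and is silent on the converse, so the second half of your attempt goes beyond the paper's own text.

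That second half, however, contains a genuine gap, and it is precisely the one you flag as ``the main obstacle'': from the hypothesis you only know that $\partial B_X$ contains a segment parallel to $[a,b]$ \emph{somewhere}, whereas your argument needs $\partial B_X$ to be flat in that direction \emph{at the specific touching point} $w=\tfrac{1}{\rho}\bigl(p_K(x)-x\bigr)$. These are not the same, and the converse as literally stated is in fact false in dimension $\geq 3$. Take in $\mathbb{R}^3$ the cylinder norm $\|(a,b,c)\|=\max\bigl(\sqrt{a^2+b^2},\,|c|\bigr)$, the box $K=[-1/2,1/2]\times[0,1]\times[0,1]$, and $x=(0,-1,-1)$. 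For $(a,b,c)\in K$ one has $\|(a,b,c)-x\|=\max\bigl(\sqrt{a^2+(b+1)^2},\,c+1\bigr)\geq 1$ with equality only at the origin, so the metric projection is \emph{unique}, $p_K(x)=(0,0,0)$; yet this point lies in the relative interior of the edge joining $(-1/2,0,0)$ to $(1/2,0,0)$ in $\partial K$, which is parallel to the segment joining $(-1,0,1)$ to $(1,0,1)$ in the top face of $\partial B_X$. The flat part of $\partial B_X$ in the direction $(1,0,0)$ simply does not pass through the touching point $w=(0,1,1)$, so moving along the edge of $K$ immediately leaves $\partial B(x,\rho)$. (Even in the plane your argument has a second unaddressed case: if $p_K(x)$ is an \emph{endpoint} of $[a,b]$, then $[a,b]$ need not lie in the supporting hyperplane at $p_K(x)$ at all.) The statement that actually admits a two-sided proof is the one your forward argument, like the paper's, really establishes: the projection is non-unique if and only if $\partial K$ and $\partial B(x,\rho)$ share a non-degenerate segment through $p_K(x)$, i.e.\ the parallel segment of $\partial B_X$ is required to be the image of the segment of $\partial K$ under $y\mapsto\tfrac{1}{\rho}(y-x)$; in that formulation the converse is immediate, since every point of the common segment is a projection. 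So your blind attempt has exposed a defect in the proposition's ``if'' direction rather than a repairable omission in your own proof.
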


\begin{proof} Assume that there are two distinct points $y_0,z_0 \in \partial K$ where the distance from $x$ to $K$ is attained, that is, with
\begin{align*} ||x-y_0|| = ||x-z_0|| = \mathrm{dist}(x,K) :=\rho.
\end{align*}
Hence $y_0,z_0 \in B(x,\rho)\cap K$. Since $B(x,\rho)\cap K \subseteq \partial K\cap \partial B(x,\rho)$, we have two distinct points in the intersection of the boundaries of two convex bodies. This is only possible if both boundaries contain the segment $\mathrm{seg}[y_0,z_0]$ connecting these points.

It follows that $\mathrm{seg}[y_0,z_0]$ is a non-degenerate line segment of $\partial K$. Since $\mathrm{seg}[y_0,z_0]$ is also a line segment in the boundary of $B(x,\rho)$, it corresponds to a (parallel) line segment in $\partial B_X$.

\end{proof}

\begin{coro} If $(X,||\cdot||)$ is a strictly convex normed space, then the metric projection on a given convex body is unique for any point $x \in X$.
\end{coro}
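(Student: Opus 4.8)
The plan is to obtain this corollary as an immediate consequence of the preceding proposition, which already isolates the exact mechanism behind non-uniqueness. The only substantive work is to check that the characterizing condition becomes vacuous under the strict convexity hypothesis, together with a quick dismissal of the trivial case in which the point already lies in $K$.

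First I would split the argument according to whether $x \in K$. If $x \in K$, then by the remark following Theorem \ref{metricprojectionteo1} we have $p_K(x) = x$, which is plainly the unique point of $K$ realizing zero distance, so uniqueness is automatic. For an exterior point $x \notin K$, I would invoke the previous proposition: a metric projection of $x$ onto $K$ fails to be unique if and only if it is contained in a non-degenerate line segment of $\partial K$ that is parallel to some non-degenerate line segment of the boundary $\partial B_X$ of the unit ball. Since $(X,||\cdot||)$ is assumed strictly convex, its unit ball $B_X$ is by definition a strictly convex body, so $\partial B_X$ contains no non-degenerate line segment whatsoever. Hence the condition required by the proposition for non-uniqueness can never be satisfied, and the metric projection of any exterior point is unique as well.

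I do not anticipate any genuine obstacle, since the entire content has been pushed into the proposition. The single point worth stating explicitly is that strict convexity of the norm is exactly the assertion that $\partial B_X$ carries no non-degenerate segment, as recalled in the introduction; once this definition is cited, the implication follows directly and the proof is complete.
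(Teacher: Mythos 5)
Your proof is correct and is essentially the paper's own (implicit) argument: the corollary is stated in the paper as an immediate consequence of the preceding proposition, exactly as you derive it, with strict convexity ruling out non-degenerate segments in $\partial B_X$ and the case $x \in K$ being trivial. No gaps.
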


Let $z\in \partial K$. An \emph{outer normal} of $K$ at $z$ is any outward pointing unit vector which is Birkhoff left-orthogonal to some supporting hyperplane of $K$ at $z$. 

\begin{prop}\label{outernormal} Let $x \notin K$, and let $p_K(x)$ be a metric projection of $x$ onto $K$. Then the outward pointing unit vector
\begin{align*} \eta_K(x) := \frac{x-p_K(x)}{\mathrm{dist}(x,K)}
\end{align*}
is an outer normal of $K$ at $p_K(x)$.
\end{prop}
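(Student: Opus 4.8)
The plan is to recognize that this proposition is an almost immediate consequence of Theorem~\ref{metricprojectionteo1}, combined with the homogeneity of Birkhoff orthogonality. First I would check that $\eta_K(x)$ is a \emph{unit} vector: by the very definition of the metric projection we have $\mathrm{dist}(x,K) = ||x - p_K(x)||$, so that
\begin{align*} ||\eta_K(x)|| = \frac{||x - p_K(x)||}{\mathrm{dist}(x,K)} = 1.
\end{align*}

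Next I would invoke Theorem~\ref{metricprojectionteo1}. Since $x \notin K$, that result provides a supporting hyperplane $\mathbf{h}$ of $K$ at $p_K(x)$ to which $x - p_K(x)$ is Birkhoff left-orthogonal. Because Birkhoff orthogonality is a homogeneous relation --- it depends only on directions, not on the lengths of the vectors involved --- and $\eta_K(x)$ is a \emph{positive} scalar multiple of $x - p_K(x)$, we immediately obtain $\eta_K(x) \dashv_B \mathbf{h}$ as well.

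The only remaining point is to argue that $\eta_K(x)$ is \emph{outward pointing}. Writing $\rho := \mathrm{dist}(x,K) > 0$ as in the proof of Theorem~\ref{metricprojectionteo1}, the hyperplane $\mathbf{h}$ constructed there separates $\mathrm{int}(K)$ from $\mathrm{int}(B(x,\rho))$. As $x \in \mathrm{int}(B(x,\rho))$, the point $x$ lies in the open half-space bounded by $\mathbf{h}$ that does \emph{not} contain $K$; hence the direction $x - p_K(x)$, and with it $\eta_K(x)$, points away from $K$. Collecting the three observations, $\eta_K(x)$ is a unit, outward-pointing vector that is Birkhoff left-orthogonal to a supporting hyperplane of $K$ at $p_K(x)$, which is precisely the definition of an outer normal.

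Since the statement essentially repackages Theorem~\ref{metricprojectionteo1}, I anticipate no genuine difficulty; the one step deserving a word of care is the outward-pointing claim, which has to be read off from the hypothesis $x \notin K$ (via the separating hyperplane) rather than taken for granted.
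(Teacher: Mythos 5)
Your proof is correct and follows essentially the same route as the paper's: both invoke Theorem~\ref{metricprojectionteo1} to get the Birkhoff left-orthogonal supporting hyperplane and then observe that $\eta_K(x)$ is a unit, outward-pointing positive multiple of $x - p_K(x)$. The only difference is that you spell out the homogeneity step and the outward-pointing argument, which the paper dismisses as ``clearly'' true.
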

\begin{proof} The fact that $\eta_K(x)$ is Birkhoff left-orthogonal to some supporting hyperplane of $K$ at $p_K(x)$ comes from Theorem \ref{metricprojectionteo1}. Since $\eta_K(x)$ is clearly unit and outward pointing, we get that $\eta_K(x)$ is an outer normal of $K$.

\end{proof}

In what follows, we denote the line passing through $z$ in the direction of $u$ as $z + \mathbb{R}u$. The ray starting at $z$ with the direction of $u$ will be denoted by $z + \mathbb{R}^+u$. Next we investigate the ``converse" of the idea that the (normalized) vector subtraction between an exterior point and its metric projection is an outer normal.

\begin{prop}\label{propsun} Let $K$ be any convex body in $X$, and let $x \in X\setminus K$. Then $p_K(x)$ is a metric projection for any $y \in p_K(x) +\mathbb{R}^+\eta_K(x)$.
\end{prop}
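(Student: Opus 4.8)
The plan is to show that, along the ray $y=p_K(x)+t\eta_K(x)$ with $t\ge 0$, the closed ball $B(y,t)$ touches $K$ only at $p_K(x)$ from outside, which forces $\mathrm{dist}(y,K)=t=\|y-p_K(x)\|$ and hence makes $p_K(x)$ a metric projection of $y$. Throughout I write $p:=p_K(x)$, $\eta:=\eta_K(x)$ and $y=p+t\eta$; since $\|\eta\|=1$ we have $\|y-p\|=t$. The case $t=0$ is immediate because then $y=p\in K$ is its own projection, so I assume $t>0$.

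The geometric input comes from Theorem \ref{metricprojectionteo1}, whose proof produces a hyperplane $\mathbf{h}$ supporting both $K$ and the ball $B(x,\rho)$ at $p$ (here $\rho=\mathrm{dist}(x,K)$), separating their interiors, and to which $x-p$---and therefore, by homogeneity of Birkhoff orthogonality, $\eta$---is Birkhoff left-orthogonal. I would record the two facts used below: first, $\eta\dashv_B\mathbf{h}$; and second, since the center $x=p+\rho\eta$ of $B(x,\rho)$ lies on the side of $\mathbf{h}$ towards $\eta$, the body $K$ lies in the opposite closed half-space bounded by $\mathbf{h}$.

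The crucial step is to check that $\mathbf{h}$ supports $B(y,t)$ at $p$. A generic point of $\mathbf{h}$ has the form $p+u$ with $u$ in the linear subspace parallel to $\mathbf{h}$, and its distance to the center $y$ is $\|u-t\eta\|$. Here Birkhoff orthogonality does the work: from $\eta\dashv_B u$ we have $\|\eta+su\|\ge\|\eta\|=1$ for all $s\in\mathbb{R}$, and taking $s=-1/t$ gives $\|u-t\eta\|=t\,\|\eta-\tfrac{1}{t}u\|\ge t$. Thus every point of $\mathbf{h}$ is at distance at least $t$ from $y$, while $p\in\mathbf{h}$ satisfies $\|y-p\|=t$; hence $\mathbf{h}$ meets $B(y,t)$ only on the boundary and supports it at $p$. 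Since $y$ lies strictly on the $\eta$-side, $\mathrm{int}\,B(y,t)$ is contained in the open $\eta$-side of $\mathbf{h}$, which is disjoint from the closed half-space containing $K$; therefore $\mathrm{int}\,B(y,t)\cap K=\emptyset$ and $\mathrm{dist}(y,K)\ge t$. Together with $p\in K$ and $\|y-p\|=t$ this yields $\mathrm{dist}(y,K)=\|y-p\|$, so $p_K(x)$ is a metric projection of $y$. I expect the ball-support step to be the only genuine obstacle, yet it collapses to the one-line use of Birkhoff orthogonality above; the sole point needing care is the bookkeeping of which side of $\mathbf{h}$ holds $K$ and which holds the ball, and this is already settled by the separation built in the proof of Theorem \ref{metricprojectionteo1}.
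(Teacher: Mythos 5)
Your proposal is correct and follows essentially the same route as the paper: both take the hyperplane $\mathbf{h}$ from Theorem \ref{metricprojectionteo1} supporting $K$ and $B(x,\rho)$ at $p_K(x)$, show that $\mathbf{h}$ also supports $B(y,t)$ at $p_K(x)$ via Birkhoff orthogonality of $\eta_K(x)$ to $\mathbf{h}$, and conclude by separation that $\mathrm{int}\,B(y,t)\cap K=\emptyset$. The only difference is that you spell out the support step with the explicit estimate $\|u-t\eta\|\ge t$, which the paper leaves implicit.
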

\begin{proof} We write $\rho:=\mathrm{dist}(x,K)$. By Theorem \ref{metricprojectionteo1}, let $\mathbf{h}$ be a hyperplane which supports both $K$ and $B(x,\rho)$ at $p_K(x)$. If $y \in p_K(x) + \mathbb{R}^+\eta_K(x)$, then $y - p_K(x)$ has the  direction of $x-p_K(x)$, and hence it is (left) Birkhoff orthogonal to $\mathbf{h}$. Writing $\alpha := ||y-p_K(x)||$, it follows that $\mathbf{h}$ supports $B(y,\alpha)$ at $p_K(x)$. Since $B(y,\alpha)$ clearly lies in the same half-space determined by $\mathbf{h}$ as $B(x,\rho)$, we get that $K\cap\mathrm{int}(B(y,\alpha)) = \emptyset$. Consequently, we have that $||y - z|| \geq \alpha$ for all $z \in K$, and hence $p_K(x)$ is a metric projection of $y$ onto $K$ (because $||y-p_K(x)|| = \alpha$).

\end{proof}

\begin{remark} A set with the property proved in this proposition is often called a \emph{sun} (see, e.g., \cite{brown,hetzelt,narang} and references therein).
\end{remark}

\begin{scho}If $z \in \partial K$, then any point in the ray starting at $z$ and going into the direction of an outer normal of $K$ at $z$ has $z$ as a metric projection.
\end{scho}
\begin{proof} Let $z \in \partial K$ and assume that $u$ is an outer normal of $K$ at $z$. Let $\mathbf{h}$ be a supporting hyperplane of $K$ at $z$ which is (right) Birkhoff orthogonal to $u$. If $w = z + \beta u$ for some $\beta > 0$, then $z \in \partial B(w,\beta)$ and $w - z$ has the direction of $u$, from where $\mathbf{h}$ supports $B(w,\beta)$ at $z$. Clearly, $K$ and $B(w,\beta)$ lie in distinct half-spaces determined by $\mathbf{h}$, from where we get that $K\cap \mathrm{int}(B(w,\rho)) = \emptyset$. It follows that $||w - y|| \geq \beta$ for every $y \in K$, and this shows that $z$ is a metric projection of $w$ onto $K$, because $||w-z|| = \beta$.

\end{proof}

We have a geometric consequence that will be useful to understand the regularity of the distance function. A \emph{parallel set} (or \emph{parallel body}) of a given convex body $K$ is a set of the type
\begin{align*} K+\delta B := \{x+\delta y: x \in K, \ y \in B\} = \{x \in X:\mathrm{dist}(x,K)\leq \delta\},
\end{align*}
for $\delta > 0$. The equality above is immediate, and we will skip the proof. Also, it is easy to see that each parallel set $K+\delta B$ is a convex body.

If we assume that $B$ is smooth (in the sense that it has a unique supporting hyperplane at each boundary point), then $K+\delta B$ is smooth for any $\delta > 0$, even if $K$ is not smooth (see, e.g., \cite{krantz}).

\begin{prop}\label{birkhoffparallelset} Let $K$ be a convex body, and let $z \in \partial K$ be a boundary point. Let $u_K(z)$ be an outer normal vector of $K$ at $z$, and let $\delta > 0$. If $x = z + \delta u_K(z)$, then $u_K(z)$ is a Birkhoff normal vector of the parallel body $K+\delta B$ at $x$.
\end{prop}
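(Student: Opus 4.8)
The plan is to avoid constructing a supporting hyperplane of the parallel body directly, and instead realize $u_K(z)$ as the normalized direction from an exterior point to its metric projection onto $K+\delta B$, so that Proposition \ref{outernormal} applies verbatim. First I would record the two consequences of the previous Scholium that are needed. Since $u_K(z)$ is an outer normal of $K$ at $z$ and $x = z+\delta u_K(z)$ lies on the ray $z+\mathbb{R}^+ u_K(z)$, the Scholium gives that $z$ is a metric projection of $x$ onto $K$; as $u_K(z)$ is a unit vector this yields $\mathrm{dist}(x,K) = ||x-z|| = \delta$, and therefore $x \in \partial(K+\delta B)$ by the characterization $K+\delta B = \{w : \mathrm{dist}(w,K)\le\delta\}$.

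Next I would move further out along the same ray. Fix any $t > \delta$ and set $y := z + t\, u_K(z)$. The Scholium again gives $\mathrm{dist}(y,K) = ||y-z|| = t$. The heart of the argument is the identity $\mathrm{dist}(y, K+\delta B) = t - \delta$, together with the claim that $x$ is a metric projection of $y$ onto $K+\delta B$. The upper bound is immediate: $x \in K+\delta B$ and $||y-x|| = ||(t-\delta)u_K(z)|| = t-\delta$. For the lower bound I would invoke the weak contraction property of $\mathrm{dist}(\cdot,K)$ proved earlier in this section: for any $w$ with $\mathrm{dist}(w,K)\le\delta$ we have $||y-w|| \ge \mathrm{dist}(y,K) - \mathrm{dist}(w,K) \ge t - \delta$, so no point of $K+\delta B$ is closer to $y$ than $x$ is. Hence $\mathrm{dist}(y, K+\delta B) = t-\delta$ and $p_{K+\delta B}(y) = x$.

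Finally, applying Proposition \ref{outernormal} to the convex body $K+\delta B$ and the exterior point $y$, the vector
\[
\eta_{K+\delta B}(y) = \frac{y - p_{K+\delta B}(y)}{\mathrm{dist}(y, K+\delta B)} = \frac{(t-\delta)u_K(z)}{t-\delta} = u_K(z)
\]
is an outer normal of $K+\delta B$ at $x$, which is exactly the assertion. The only step that is more than bookkeeping is the lower bound $\mathrm{dist}(y,K+\delta B)\ge t-\delta$; it rests entirely on the $1$-Lipschitz estimate for the distance function, so the main point is to apply that estimate to the right pair of points rather than to re-derive any geometry of the parallel body.
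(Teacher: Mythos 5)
Your proof is correct, and it takes a genuinely different route from the paper. The paper argues directly with hyperplanes: it takes the supporting hyperplane $\mathbf{h}$ of $K$ at $z$ to which $u_K(z)$ is Birkhoff left-orthogonal, translates it by $\delta u_K(z)$ so that it passes through $x$, and shows that this translate supports $K+\delta B$ at $x$ (any point strictly beyond the translate is at distance greater than $\delta$ from $K$, hence outside the parallel body). You instead avoid constructing any hyperplane and recycle three results already proved in that section: the Scholium (sun property) gives $\mathrm{dist}(x,K)=\delta$ and $\mathrm{dist}(y,K)=t$ for $y = z+t\,u_K(z)$, the weak contraction lemma gives the lower bound $\mathrm{dist}(y,K+\delta B)\geq t-\delta$, and Proposition \ref{outernormal} then converts ``$x$ is a metric projection of $y$ onto $K+\delta B$'' into the desired outer-normal statement; all three ingredients precede the proposition in the text, so there is no circularity. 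The trade-off: the paper's argument explicitly identifies a supporting hyperplane of $K+\delta B$ at $x$, namely the translate of the one supporting $K$ at $z$, which is slightly more geometric information than the bare assertion that $u_K(z)$ is Birkhoff left-orthogonal to \emph{some} supporting hyperplane; your argument is more modular and replaces the half-space bookkeeping by a one-line Lipschitz sandwich, which is arguably cleaner. One cosmetic remark: your claim $x\in\partial(K+\delta B)$ does not follow from $\mathrm{dist}(x,K)=\delta$ and the sub-level characterization alone (that only gives membership), but it is also not needed — membership $x\in K+\delta B$ suffices for the upper bound, and boundary membership comes for free from Theorem \ref{metricprojectionteo1} once $x$ is known to be the metric projection of the exterior point $y$.
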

\begin{proof} Denote by $\mathbf{h}$ the hyperplane which is Birkhoff right-orthogonal to the vector $u_K(z)$. It suffices to prove that $\mathbf{h}$ supports $K+\delta B$ at $x$.  Abusing a little of the notation, we assume that $\mathbf{h}$ is translated to pass through $x$. Denoting the (closed) half-spaces regarding $\mathbf{h}$ by $\mathbf{h}^+$ and $\mathbf{h}^-$, we may assume, without loss of generality, that $K\subseteq \mathbf{h}^-$. Hence we have that $K+\delta B\subseteq (\mathbf{h}+\delta u_K(z))^-$. Indeed, if $y \in \mathrm{int}(\mathbf{h}+\delta u_K(z))^+$, then $||y - w|| > \delta$ for any $w \in K$, and this, together with the compactness of $K$, leads to $\mathrm{dist}(y,K) > \delta$, from where we get that $y \notin K+\delta B$. It follows that $\mathbf{h}+\delta u_K(z)$ supports $K+\delta B$ at $x$, yielding that $u_K(z)$ is a Birkhoff normal vector of $K+\delta B$ at $x$.

\end{proof}

We know that the distance function to a convex body is convex, and hence it is continuous. However, we said nothing about the continuity of the metric projection (in case it is unique) thus far. This will be clarified next. In the last section of the paper, we will prove that the continuity of the metric projection gives that the derivative of the distance function is continuous (that is, $d_K$ is of class $C^1$).

\begin{prop}\label{metricprojectioncontinuous} Let $K\subseteq X$ be a convex body, and assume that the metric projection $p_K\colon X\rightarrow K$ is well-defined. Then $p_K$ is continuous. 
\end{prop}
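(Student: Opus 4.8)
The plan is to argue by sequences, using the compactness of $K$ together with the continuity of the distance function $d_K$ (established as a weak contraction in the lemma above) and the assumed uniqueness of the metric projection. Fix $x \in X$ and let $(x_n)$ be any sequence with $x_n \to x$. I want to conclude that $p_K(x_n) \to p_K(x)$, which gives continuity of $p_K$ at the arbitrary point $x$.

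First I would observe that the points $p_K(x_n)$ all lie in the compact set $K$, so the sequence $(p_K(x_n))$ admits a convergent subsequence $p_K(x_{n_k}) \to y_0$ for some $y_0 \in K$. The goal is to identify $y_0$ with $p_K(x)$. To this end, I would write $d_K(x_{n_k}) = ||x_{n_k} - p_K(x_{n_k})||$ and let $k \to \infty$: continuity of the norm (applied to $x_{n_k} \to x$ and $p_K(x_{n_k}) \to y_0$) yields $||x_{n_k} - p_K(x_{n_k})|| \to ||x - y_0||$, while continuity of $d_K$ yields $d_K(x_{n_k}) \to d_K(x)$. Combining these gives $||x - y_0|| = d_K(x) = \mathrm{dist}(x,K)$, so $y_0$ is a metric projection of $x$ onto $K$. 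By the well-definedness hypothesis the metric projection is unique, and therefore $y_0 = p_K(x)$.

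Finally I would upgrade this subsequential convergence to convergence of the full sequence. The argument just given shows that \emph{every} convergent subsequence of $(p_K(x_n))$ has limit $p_K(x)$; since $(p_K(x_n))$ lives in the compact set $K$, this forces $p_K(x_n) \to p_K(x)$. (Otherwise some subsequence would stay bounded away from $p_K(x)$, and by compactness it would admit a further subsequence converging to a point different from $p_K(x)$, contradicting what was just proved.) As $x$ and the sequence were arbitrary, $p_K$ is continuous on $X$.

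The argument is essentially routine; the only point requiring genuine care is the passage from subsequential limits to the limit of the full sequence, and this is precisely where both the compactness of $K$ and the uniqueness of the projection enter. It is worth noting that the hypothesis that $p_K$ is well-defined is used only to guarantee $y_0 = p_K(x)$; without uniqueness one still obtains that the limit of any convergent subsequence is \emph{a} metric projection of $x$, so the same reasoning shows that the set-valued projection is upper semicontinuous.
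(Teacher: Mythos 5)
Your proof is correct and follows essentially the same approach as the paper: extract a convergent subsequence of $(p_K(x_n))$ using compactness of $K$, identify its limit as a metric projection of $x$ via continuity of the norm and of $d_K$, invoke uniqueness, and upgrade to convergence of the full sequence by the standard subsequence argument. The only difference is organizational (the paper phrases the whole argument as a single contradiction), and your closing remark about upper semicontinuity of the set-valued projection is a correct bonus observation.
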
 
\begin{proof} It is clear that $p_K$ is continuous in the interior of $K$. Hence we consider the case where $x \in X$ is not an interior point of $K$. Let $(x_n)_{n\in\mathbb{N}}$ be a sequence converging to $x$, and assume that $p_K(x_n)$ does not converge to $p_K(x)$. Since $K$ is compact, it follows that there exists a subsequence $p_K(x_{n_j})$ converging to a point $w \in K$ with $w \neq p_K(x)$. From the continuity of the norm and of the distance function we get
\begin{align*} ||x-w|| = \lim_{j\rightarrow\infty}||x_{n_j}-p_K(x_{n_j})|| = \lim_{j\rightarrow\infty}\mathrm{dist}(x_{n_j},K) = \mathrm{dist}(x,K),
\end{align*}
from where $w$ is a metric projection of $x$ onto $K$. This contradicts the uniqueness of the metric projection. Hence $p_K(x_n)\rightarrow p_K(x)$, and this shows that $p_K$ is continuous at $x$. 

\end{proof}

\section{Norm gradients, and norm sub-gradients}\label{gradsubgrad}

For simplicity, we work from now on in the vector space $\mathbb{R}^n$; so we can use standard notions of differentiability. Also, we \textbf{always} suppose that the norm is smooth and strictly convex; so the Birkhoff orthogonality is ``well-behaving". Let $f\colon U\rightarrow\mathbb{R}$ be a (Fr\'{e}chet) differentiable function, where $U\subseteq\mathbb{R}^n$ is an open set. A number $c \in \mathrm{im}(f)$ is said to be a \emph{regular value} of $f$ if $df_x$ is surjective for each $x \in f^{-1}(c)$. It is well known that the pre-image of any regular value is a hypersurface of $\mathbb{R}^n$. The \emph{gradient vector} $\mathrm{grad}f(x)$ of $f$ at $x \in U$ is the (unique) vector such that $df_x(v) = \langle \mathrm{grad} f(x),v\rangle$ for each $v\in\mathbb{R}^n$. We say that $f$ is a \emph{function of class $C^1$} if $\mathrm{grad}f(x)$ is continuous. This definition relies on an inner product fixed in $\mathbb{R}^n$, and we would like to have a definition consistent with the geometry given by a (smooth and strictly convex) arbitrary norm.

\begin{definition} Let $U\subseteq\mathbb{R}^n$ be an open set, and let $f\colon U\subseteq\mathbb{R}^n\rightarrow\mathbb{R}$ be a differentiable function. Let $c$ be a regular value of $f$. The \emph{norm gradient} of $f$ at a point $x \in f^{-1}(c)$ is the unique vector $\nabla f(x) \in \mathbb{R}^n$ with the following properties:\\

\noindent\textbf{(i)} $\nabla f(x) \dashv_B T_x(f^{-1}(c))$, and \\

\noindent\textbf{(ii)} $df_x(\nabla f(x)) = ||\nabla f(x)||^2$.\\

\noindent If $c \in \mathrm{im}(f)$ is not a regular value and $x \in f^{-1}(c)$, then we simply put $\nabla f(x) = 0$.
\end{definition}

\begin{remark} From now on we always use the symbol $\nabla$ as a notation for the norm gradient, despite the fact that this notation is often used for the standard Euclidean gradient. This choice is justified because we will not work with the Euclidean gradient throughout the text.
\end{remark}

It is not difficult to see that the norm gradient is well-defined. If $c$ is a regular value, then $f^{-1}(c)$ is a hypersurface, and hence it has an $(n-1)$-dimensional tangent hyperplane $T_x(f^{-1}(c))$ at any point $x \in f^{-1}(c)$. This hyperplane supports the unit ball of the norm at a point which determines the Birkhoff left-orthogonal direction to $T_x(f^{-1}(c))$. The condition \textbf{(ii)} gives both, the choice of an orientation and a normalization.

This definition makes sense for $C^1$ functions, since in this case we can define the tangent space of the pre-image of a regular value. However, we can relax this regularity hypothesis when dealing with convex functions, and we are mostly concerned with this case. Based on Proposition \ref{convexsublevel}, we define the \emph{norm gradient} of a convex function $f\colon\mathbb{R}^n\rightarrow\mathbb{R}$ at a point $x \in \mathbb{R}^n$ where $f$ is differentiable. If $x \in \mathbb{R}^n$ is such that $c = f(x)$ is not a global minimum, then we take the unique non-zero vector $\nabla f(x)$ such that\\

\noindent\textbf{(a)} $\nabla f(x)$ is Birkhoff left-orthogonal to the (unique) supporting hyperplane of $\{f\leq c\}$ at $x$, \\

\noindent\textbf{(b)} $df_x(\nabla f(x)) = ||\nabla f(x)||^2$.\\

\noindent Otherwise (that is, if $c$ is a global minimum), we put $\nabla f(x) = 0$.

Denote by $(\mathbb{R}^n)^*$ the dual space of $\mathbb{R}^n$, and assume that $||\cdot||$ is a smooth and strictly convex norm on $\mathbb{R}^n$. The \emph{Legendre transform} on $(\mathbb{R}^n,||\cdot||)$ is the map $\mathcal{L}\colon\mathbb{R}^n\rightarrow(\mathbb{R}^n)^*$ which associates to each non-zero vector $x \in \mathbb{R}^n$ the unique linear functional $\mathcal{L}(x) \in (\mathbb{R}^n)^*$ such that\\

\noindent\textbf{(a)} $\mathrm{ker}\mathcal{L}(x)$ is the (unique) hyperplane which is Birkhoff right-orthogonal to $x$, \\

\noindent\textbf{(b)} $\mathcal{L}(x)\cdot x = ||x||^2$.\\

\noindent We also define $\mathcal{L}(0) = 0$, that is, the null functional in $(\mathbb{R}^n)^*$. It is immediate to observe that $\mathcal{L}$ is a bijection. 

\begin{remark} Notice very carefully that, for defining the Legendre  
transform, we do not need the norm to be strictly convex. However, we always assume that this hypothesis is true, because we are more interested in this case. 
\end{remark}

\begin{remark} If $|\cdot|$ is the Euclidean norm in $\mathbb{R}^n$ derived from the standard inner product $\langle\cdot,\cdot\rangle$, then it is easy to see that the Legendre transform of $|\cdot|$ is given as $\mathcal{L}(x) = \langle x,\cdot\rangle$. That is, in this case the Legendre transform is the natural isomorphism between $\mathbb{R}^n$ and $(\mathbb{R}^n)^*$ given by the standard inner product. 
\end{remark}

\begin{lemma} The Legendre transform is homogeneous of degree one, that is, $\mathcal{L}(\alpha x) = \alpha\mathcal{L}(x)$ for any $\alpha \in \mathbb{R}$ and $x \in \mathbb{R}^n$. 
\end{lemma}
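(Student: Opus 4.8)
The plan is to exploit the uniqueness built into the definition of the Legendre transform: to prove $\mathcal{L}(\alpha x) = \alpha\mathcal{L}(x)$ it suffices to check that the candidate functional $\alpha\mathcal{L}(x)$ satisfies the two defining conditions \textbf{(a)} and \textbf{(b)} of $\mathcal{L}(\alpha x)$, after which the claim follows from the uniqueness asserted in the definition. I would first dispose of the degenerate cases: if $\alpha = 0$ or $x = 0$, then both sides equal the null functional by the convention $\mathcal{L}(0) = 0$, so from now on I may assume $\alpha \neq 0$ and $x \neq 0$.

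For the kernel condition \textbf{(a)}, I would verify that the hyperplane Birkhoff right-orthogonal to $\alpha x$ coincides with the one Birkhoff right-orthogonal to $x$. This rests on the homogeneity of Birkhoff orthogonality noted in the introduction: since $||\beta x + ty|| = |\beta|\,||x + (t/\beta)y||$ for $\beta \neq 0$, the relation $x \dashv_B y$ holds if and only if $\beta x \dashv_B y$, so Birkhoff left-orthogonality depends only on the line spanned by the left vector. In particular this is insensitive to the sign of $\alpha$, whence the Birkhoff right-orthogonal hyperplane to $\alpha x$ is the same as that to $x$, namely $\ker\mathcal{L}(x)$. Since $\ker(\alpha\mathcal{L}(x)) = \ker\mathcal{L}(x)$ for $\alpha \neq 0$, condition \textbf{(a)} holds for the functional $\alpha\mathcal{L}(x)$.

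For the normalization \textbf{(b)}, I would simply compute $(\alpha\mathcal{L}(x))\cdot(\alpha x) = \alpha^2\,(\mathcal{L}(x)\cdot x) = \alpha^2||x||^2 = ||\alpha x||^2$, using $\mathcal{L}(x)\cdot x = ||x||^2$ together with $||\alpha x|| = |\alpha|\,||x||$. With both \textbf{(a)} and \textbf{(b)} verified for $\alpha\mathcal{L}(x)$, the uniqueness clause in the definition of $\mathcal{L}(\alpha x)$ forces $\mathcal{L}(\alpha x) = \alpha\mathcal{L}(x)$, as desired.

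The only genuine subtlety, rather than a serious obstacle, is the case $\alpha < 0$: one must be sure that reversing the direction of $x$ does not change the Birkhoff right-orthogonal hyperplane. This is exactly what the homogeneity of Birkhoff orthogonality guarantees, since that relation is between directions (equivalently, lines) and not oriented rays; the orientation is recovered entirely through the sign carried by the normalization \textbf{(b)}, which is why the factor $\alpha$ (and not $|\alpha|$) appears on the right-hand side.
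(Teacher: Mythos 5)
Your proposal is correct and follows essentially the same route as the paper: both arguments reduce to the trivial cases, use the homogeneity of Birkhoff orthogonality to see that $\alpha x$ and $x$ share the same right-orthogonal hyperplane, and then check the normalization $\|\alpha x\|^2 = \alpha^2\|x\|^2$ at the vector $\alpha x$. The only cosmetic difference is that the paper makes the uniqueness step explicit (two linear functionals with common kernel $\mathbf{h}$ agreeing at $\alpha x$ agree on a basis of $\mathbb{R}^n$), whereas you cite the uniqueness clause of the definition directly.
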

\begin{proof} Of course, we may assume that $\alpha \neq 0$ and $x \neq 0$, since otherwise the proof is trivial. Let $\mathbf{h}$ be the hyperplane such that $x \dashv_B\mathbf{h}$. Then we also have that $\alpha x \dashv_B \mathbf{h}$, meaning that the linear functionals $\mathcal{L}(\alpha x)$ and $\alpha\mathcal{L}(x)$ have the same kernel $\mathbf{h}$. Now we observe that
\begin{align*} \mathcal{L}(\alpha x)\cdot(\alpha x) = ||\alpha x||^2 = \alpha^2||x||^2 = \alpha\mathcal{L}(x)\cdot(\alpha x),
\end{align*}  
that is, the linear functionals $\mathcal{L}(\alpha x)$ and $\alpha\mathcal{L}(x)$ take the same value at $\alpha x$. This concludes the proof, since now we have that both linear functionals agree on a basis of $\mathbb{R}^n$. 

\end{proof}

We also have that the Legendre transform is a norm-preserving map when one considers the \emph{dual norm} on $(\mathbb{R}^n)^*$. We prove this next. 

\begin{prop} Let $||\cdot||_*$ be the \emph{dual norm} on $(\mathbb{R}^n)^*$, defined as
\begin{align*} ||\phi||_* := \sup\{\phi(x):x \in B\}. 
\end{align*}
The Legendre transform is a norm-preserving map of $(\mathbb{R}^n,||\cdot||)$ onto $((\mathbb{R}^n)^*,||\cdot||_*)$.
\end{prop}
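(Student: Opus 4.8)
The statement to establish is that $\|\mathcal{L}(x)\|_* = \|x\|$ for every $x \in \mathbb{R}^n$. The plan is to dispose first of the trivial case $x = 0$ (where $\mathcal{L}(0) = 0$, so both sides vanish) and then, for a fixed nonzero $x$, to prove the two inequalities separately. Throughout I would write $u := x/\|x\|$ for the unit vector in the direction of $x$, so that $u \in \partial B \subseteq B$.

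First I would handle the lower bound by testing the supremum defining the dual norm against the admissible vector $u$. Using the linearity of $\mathcal{L}(x)$ together with property (b) of the Legendre transform, one gets $\mathcal{L}(x)\cdot u = \tfrac{1}{\|x\|}\,\mathcal{L}(x)\cdot x = \tfrac{\|x\|^2}{\|x\|} = \|x\|$, and since $u \in B$ this immediately yields $\|\mathcal{L}(x)\|_* \geq \|x\|$.

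The substantive part is the reverse inequality, and here the plan is to convert property (a) into a statement about supporting hyperplanes of $B$. Writing $\mathbf{h} := \ker\mathcal{L}(x)$, property (a) says $x \dashv_B \mathbf{h}$, and by homogeneity of Birkhoff orthogonality also $u \dashv_B z$ for every $z \in \mathbf{h}$. By the definition of Birkhoff left-orthogonality this means $\|u + tz\| \geq \|u\| = 1$ for all $t \in \mathbb{R}$ and all $z \in \mathbf{h}$; that is, every point of the affine hyperplane $u + \mathbf{h}$ has norm at least $1$, so $u + \mathbf{h}$ supports $B$ at $u$. Now $\mathcal{L}(x)$ is constant on this hyperplane, with value $\mathcal{L}(x)\cdot u = \|x\|$, while it takes the strictly smaller value $0$ at the interior point $0 \in \mathrm{int}(B)$. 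Hence $B$ lies entirely in the half-space $\{y : \mathcal{L}(x)\cdot y \leq \|x\|\}$, which gives $\mathcal{L}(x)\cdot y \leq \|x\|$ for every $y \in B$ and therefore $\|\mathcal{L}(x)\|_* \leq \|x\|$. Combining the two bounds yields $\|\mathcal{L}(x)\|_* = \|x\|$.

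I expect the main obstacle to be precisely the translation carried out in the third paragraph: justifying that the algebraic condition $x \dashv_B \ker\mathcal{L}(x)$ forces the level hyperplane $\{y : \mathcal{L}(x)\cdot y = \|x\|\}$ to support the unit ball at $u$. Once this geometric fact is in place, the bound on the dual norm is immediate, and the remaining steps are routine computations with the defining properties of $\mathcal{L}$.
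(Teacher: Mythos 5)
Your proposal is correct and takes essentially the same approach as the paper: the lower bound comes from evaluating $\mathcal{L}(x)$ at $x/\|x\|$ via property (b), and the upper bound from turning property (a), $x \dashv_B \ker\mathcal{L}(x)$, into the statement that the hyperplane through the unit vector $u$ parallel to $\ker\mathcal{L}(x)$ supports $B$. The only cosmetic difference is in how that support fact is cashed in: the paper parametrizes the unit vectors on the positive side of the kernel as $(x+z)/\|x+z\|$ with $z \in \ker\mathcal{L}(x)$ and computes $\mathcal{L}(x)\cdot y = \|x\|^2/\|x+z\| \leq \|x\|$ directly, whereas you conclude by noting that $B$ must lie in the closed half-space $\{y : \mathcal{L}(x)\cdot y \leq \|x\|\}$ containing the interior point $0$; both arguments hinge on exactly the same inequality $\|x+z\| \geq \|x\|$.
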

\begin{proof} From the homogeneity, it suffices to prove that $||\mathcal{L}(x)||_* = 1$ whenever $||x|| = 1$. Let $x$ be a unit vector, and assume that $\mathbf{h}$ is the hyperplane such that $x \dashv_B \mathbf{h}$. It is easy to see from the definition that $\mathcal{L}(x)\cdot y$ is positive if and only if $y$ lies in the open half-space determined by $\mathbf{h}$ which contains $x$ ($\mathbf{h}^+$, say). Also, $\mathbf{h}$ supports the unit ball at $x$, and consequently any vector $y \in B\cap\mathbf{h}^+$ can he written as
\begin{align*} y= \frac{x+z}{||x+z||}
\end{align*}
for some $z \in \mathbf{h}$. Since $||x+z|| \geq ||x||$, we get
\begin{align*} \mathcal{L}(x)\cdot y = \mathcal{L}(x)\cdot\frac{x+z}{||x+z||} = \frac{\mathcal{L}(x)\cdot x}{||x+z||} = \frac{||x||^2}{||x+z||} \leq ||x||^2  = 1,
\end{align*}
and hence $||\mathcal{L}(x)||_* = \sup\{\mathcal{L}(x)\cdot v:v\in B\} \leq 1$ is obtained. On the other hand, we have that $x \in B$ and $\mathcal{L}(x)\cdot x = ||x||^2 = 1$, and this yields $||\mathcal{L}(x)||_* = 1 = ||x||$. 

\end{proof}

Our definition of the Legendre transform may be a little bit intrincated to work with, but it has the advantage of not demanding any differentiability properties of the norm (only the geometric features of being smooth and strictly convex). Actually, later our approach will give an easy proof of the fact that norms with these geometric properties must be of type $C^1$. Even without assuming \emph{a priori} differentiability properties, we can guarantee that the Legendre transform is continuous. 

\begin{prop} Let $||\cdot||$ be a smooth and strictly convex norm on $\mathbb{R}^n$. Then the associated Legendre transform $\mathcal{L}\colon(\mathbb{R}^n,||\cdot||)\rightarrow((\mathbb{R}^n)^*,||\cdot||_*)$ is continuous.
\end{prop}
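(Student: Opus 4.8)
The plan is to argue by sequential continuity, which suffices since all spaces involved are finite-dimensional. The case $x=0$ is immediate from the preceding proposition: if $x_n\to 0$, then $||\mathcal{L}(x_n)||_* = ||x_n||\to 0$, so $\mathcal{L}(x_n)\to 0 = \mathcal{L}(0)$. The substance lies in proving continuity at a nonzero point $x$, and here I would take an arbitrary sequence $x_n\to x$ and invoke the standard subsequence principle: it is enough to show that every subsequence of $(\mathcal{L}(x_n))$ has a further subsequence converging to $\mathcal{L}(x)$. Because $\mathcal{L}$ is norm-preserving, the dual norms $||\mathcal{L}(x_n)||_* = ||x_n||$ stay bounded, so any subsequence has a convergent sub-subsequence, say $\mathcal{L}(x_{n_k})\to\phi$ in $((\mathbb{R}^n)^*,||\cdot||_*)$. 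The entire problem then reduces to identifying the limit functional $\phi$ as $\mathcal{L}(x)$.

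To carry out this identification, I would recover the two defining conditions of $\mathcal{L}(x)$ for $\phi$, but phrased through scalar relations rather than directly through Birkhoff orthogonality. First, continuity of the dual pairing applied to $\mathcal{L}(x_{n_k})\cdot x_{n_k} = ||x_{n_k}||^2$ yields, in the limit, $\phi\cdot x = ||x||^2$. Second, continuity of the dual norm together with the norm-preserving property gives $||\phi||_* = \lim_k ||x_{n_k}|| = ||x||$. These two relations combine to say that $\phi\cdot x = ||x||^2 = ||\phi||_*\,||x||$, so $\phi$ attains its maximum over $B(0,||x||)$ at the point $x$; equivalently, the hyperplane $\{\phi = ||x||^2\}$ supports $B(0,||x||)$ at $x$, which is exactly the statement $x\dashv_B\ker\phi$.

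At this point I would close the argument by uniqueness. Since $\mathcal{L}(x)$ satisfies these same two relations, both $\phi$ and $\mathcal{L}(x)$ determine supporting hyperplanes of $B(0,||x||)$ at $x$; by smoothness of the norm this supporting hyperplane is unique, and as it does not pass through the origin (because $||x||^2>0$) it determines the functional uniquely. Hence $\phi = \mathcal{L}(x)$, completing the subsequence argument and thereby establishing continuity at $x$.

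I expect the genuine obstacle to be condition \textbf{(a)} in the definition of $\mathcal{L}$, namely recovering the Birkhoff-orthogonality of $\ker\phi$ to $x$ for the limit functional: Birkhoff orthogonality is not visibly stable under the limit $x_{n_k}\to x$, so passing it directly to the limit is awkward. The device that bypasses this difficulty is to deduce the orthogonality from the two robust scalar limits $\phi\cdot x = ||x||^2$ and $||\phi||_* = ||x||$, with smoothness supplying exactly the uniqueness of the supporting hyperplane that pins $\phi$ down.
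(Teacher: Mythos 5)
Your proof is correct, but it takes a genuinely different route from the paper's. The paper works in three steps: it first proves \emph{pointwise} convergence $\mathcal{L}(x_n)\cdot v\to\mathcal{L}(x)\cdot v$ by decomposing $v=\alpha_n x_n+z_n$ along the supporting hyperplanes and showing $\alpha_n\to\alpha$ (this is where smoothness and the continuity of Birkhoff orthogonality enter there); it then upgrades to dual-norm convergence of a subsequence via the Arzel\`{a}--Ascoli theorem applied to the restrictions $\mathcal{L}(x_n)|_B$; and finally it gets convergence of the full sequence by a contradiction argument. You instead get subsequential limits for free from boundedness and finite-dimensionality (Bolzano--Weierstrass, which indeed makes Arzel\`{a}--Ascoli unnecessary here), and you shift all the real work into identifying the limit $\phi$: the two scalar relations $\phi\cdot x=\|x\|^2$ and $\|\phi\|_*=\|x\|$ pass to the limit robustly, they force $x\dashv_B\ker\phi$, and smoothness then pins down $\ker\phi$ as the unique hyperplane Birkhoff right-orthogonal to $x$, whence $\phi=\mathcal{L}(x)$. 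This is exactly the device the paper itself uses later (in proving $\mathcal{L}^*=J\circ\mathcal{L}^{-1}$, see the remark that the kernel of a non-zero functional is the hyperplane supporting the unit ball where its dual norm is attained), so your argument is very much in the spirit of the paper while being shorter and more elementary; what the paper's heavier route buys is the explicit pointwise-convergence step, which is of independent use and would survive in settings where one cannot invoke finite-dimensional compactness of the dual ball. Your final subsequence-principle step and the paper's third step are essentially the same device, so the genuine divergence is Arzel\`{a}--Ascoli plus decomposition versus Bolzano--Weierstrass plus the scalar identification.
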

\begin{proof} First, observe that continuity at the origin comes immediately from the fact that the Legendre transform is norm-preserving. Also, since the Legendre transform is homogenous of degree $1$ it suffices to show that for any sequence $(x_n)_{n\in\mathbb{N}}$ of unit vectors converging to a (unit) vector $x$ we have that $\mathcal{L}(x_n)\rightarrow\mathcal{L}(x)$ in the dual norm as $n\rightarrow\infty$. We will divide the proof in steps.\\

\noindent\emph{First step.} We prove that for each fixed $v \in \mathbb{R}^n$ we have convergence $\mathcal{L}(x_n)\cdot v\rightarrow\mathcal{L}(x)\cdot v$ as $n\rightarrow \infty$ (pointwise convergence). Denote by $\mathbf{h}_n$ the supporting hyperplane of $B$ at $x_n$, and by $\mathbf{h}$ the supporting hyperplane of $B$ at $x$. For each $n\in\mathbb{N}$, we can write
\begin{align*} v = \alpha_nx_n + z_n
\end{align*}
for some $\alpha_n \in \mathbb{R}$ and $z_n \in \mathbf{h}_n$. Similarly, we write $v = \alpha x + z$, with $z \in \mathbf{h}$. We claim that, as $n\rightarrow \infty$, we have $\alpha_n\rightarrow \alpha$ (and, as a consequence, $z_n\rightarrow z$). First, observe that
\begin{align*} ||v|| = ||\alpha_nx_n+z_n|| = |\alpha_n|\cdot\left|\left|x_n+\frac{z_n}{\alpha_n}\right|\right| \geq |\alpha_n|\cdot||x_n|| = |\alpha_n|
\end{align*}
whenever $\alpha_n \neq 0$, where the last inequality comes from $x_n\dashv_Bz_n$. This shows that $(\alpha_n)$ is bounded. Suppose that some subsequence $\alpha_{n_k}\rightarrow\beta \neq \alpha$. Then $z_{n_k}\rightarrow v-\beta x$, and since Birkhoff orthogonality is a continuous relation, it follows that $x\dashv_B (v-\beta x)$. Therefore, we have two distinct decompositions of $v$ in the sum $\mathbf{h}\oplus\mathrm{span}\{v\}$, which is in contradiction to the fact that the norm is smooth. Consequently, every converging subsequence of $(\alpha_n)$ goes to $\alpha$. This, together with the fact that $(\alpha_n)$ is bounded, yields that $\alpha_n\rightarrow\alpha$.

Thus, we estimate
\begin{align*} |\mathcal{L}(x_n)\cdot v - \mathcal{L}(x)\cdot v| = |\mathcal{L}(x_n)\cdot(\alpha_nx_n+z_n) - \mathcal{L}(x)\cdot(\alpha x+z)| = |\alpha_n - \alpha|,
\end{align*}
and the latter goes to $0$ as $n\rightarrow \infty$. This concludes the first step. \\

\noindent\emph{Second step.} We show that $\mathcal{L}(x_n)\rightarrow\mathcal{L}(x)$ in the dual norm up to a subsequence. Consider the sequence of the (continuous) functions $\mathcal{F} = (\mathcal{L}(x_n)|_B)_{n\in\mathbb{N}}$, where $|_B$ means that we are restricting  the domain to $B$. Since $||\mathcal{L}(x_n)||_* = ||x_n|| = 1$ for every $n\in\mathbb{N}$, we have that this family is uniformly bounded. We also have that $\mathcal{F}$ is equicontinuous, because
\begin{align*} |\mathcal{L}(x_n)\cdot v - \mathcal{L}(x_n)\cdot w| \leq ||\mathcal{L}(x_n)||_*||v-w|| = ||v-w||,
\end{align*}
for any $n \in \mathbb{N}$ and every $v,w \in B$. Noticing that $B$ is compact, the Arzela-Ascoli theorem gives that there exists a subsequence $(\mathcal{L}(x_{n_k})|_B)_{k\in\mathbb{N}}$ such that $\mathcal{L}(x_{n_k})|_B\rightarrow \phi$ in the dual norm for some continuous function $\phi\colon B\rightarrow\mathbb{R}$. Since the convergence in the dual norm implies pointwise convergence, it follows from the first step of the proof that $\phi = \mathcal{L}(x)|_B$. Hence $\mathcal{L}(x_{n_k})\rightarrow\mathcal{L}(x)$ in the dual norm. \\

\noindent\emph{Third step.} We prove that $\mathcal{L}(x_n)\rightarrow\mathcal{L}(x)$ with respect to $||\cdot||_*$ (that is, we can guarantee the convergence of the original sequence, without needing to pass to a subsequence). We proceed by contradiction. If this convergence does not hold, then there exist a number $\varepsilon > 0$ and a subsequence $(x_{n_j})_{j\in\mathbb{N}}$ such that $||\mathcal{L}(x_{n_j}) - \mathcal{L}(x)||_* > 2\varepsilon$ for every $j\in\mathbb{N}$. Hence there exists a sequence $(v_{n_j})$ of vectors of $B$ such that
\begin{align*} |\mathcal{L}(x_{n_j})\cdot v_{n_j} - \mathcal{L}(x)\cdot v_{n_j}| > \varepsilon.
\end{align*}
Passing to a subsequence if necessary, we can assume that $v_{n_j}\rightarrow v$ for some vector $v \in B$ (recall that $B$ is compact). However, the left-hand side of the inequality above can be made arbitrarily small as $j \rightarrow \infty$. This comes from the estimate
\begin{align*} |\mathcal{L}(x_{n_j})\cdot v_{n_j} - \mathcal{L}(x)\cdot v_{n_j}| \leq \\ \leq |\mathcal{L}(x_{n_j})\cdot v_{n_j} - \mathcal{L}(x_{n_j})\cdot v| + |\mathcal{L}(x_{n_j})\cdot v - \mathcal{L}(x)\cdot v| + |\mathcal{L}(x)\cdot v - \mathcal{L}(x)\cdot v_{n_j}| \leq \\ \leq ||\mathcal{L}(x_{n_j})||_*||v_{n_j}-v|| + |\mathcal{L}(x_{n_j})\cdot v - \mathcal{L}(x)\cdot v| + ||\mathcal{L}(x)||_*||v-v_{n_j}|| = \\ = 2||v-v_{n_j}|| + |\mathcal{L}(x_{n_j})\cdot v - \mathcal{L}(x)\cdot v|,
\end{align*}
where the reader may observe that, as a consequence of the first step of the proof, the last term converges to $0$ as $j\rightarrow \infty$. 

\end{proof}

Let $((\mathbb{R}^n)^{**},||\cdot||_{**})$ be the bi-dual of $\mathbb{R}^n$, which is the dual space of $(\mathbb{R}^n)^*$, endowed with the norm $||\cdot||_{**}:=(||\cdot||_*)_*$, that is, the dual norm of $||\cdot||_*$. Let $J\colon\mathbb{R}^n\rightarrow(\mathbb{R}^n)^{**}$ be the canonical identification given by $J(x)\cdot \phi = \phi(x)$, for any $x \in \mathbb{R}^n$ and every $\phi \in (\mathbb{R}^n)^*$. It is well-known that $J$ is a norm-preserving isomorphism. Next we discuss the duality of the Legendre transform.

\begin{prop} Denote by $\mathcal{L}^*$ the Legendre transform of $((\mathbb{R}^n)^*,||\cdot||_*)$ onto $(\mathbb{R}^n)^{**}$, and let $\phi \in (\mathbb{R}^n)^*$. Then $\mathcal{L}^*(\phi) = J(x)$ if and only if $\phi = \mathcal{L}(x)$. In other words, $\mathcal{L}^* = J\circ\mathcal{L}^{-1}$.
\end{prop}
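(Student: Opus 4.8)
The plan is to prove the identity $\mathcal{L}^* = J\circ\mathcal{L}^{-1}$ directly: I would fix a nonzero vector $x \in \mathbb{R}^n$, set $\phi := \mathcal{L}(x)$, and verify that $J(x)$ satisfies the two defining conditions of the Legendre transform $\mathcal{L}^*(\phi)$ associated with the dual norm. The equivalence and the identity then follow from the uniqueness built into the definition of $\mathcal{L}^*$, together with the bijectivity of the Legendre transforms. The case $x = 0$ is trivial, since all three maps send $0$ to $0$, so I assume $x \neq 0$. Recall that $\mathcal{L}^*(\phi)$ is characterized as the unique element of $(\mathbb{R}^n)^{**}$ whose kernel is the hyperplane in $(\mathbb{R}^n)^*$ that is Birkhoff right-orthogonal to $\phi$ (with respect to $||\cdot||_*$) and which satisfies $\mathcal{L}^*(\phi)\cdot\phi = ||\phi||_*^2$. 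Hence it suffices to check these two properties for $J(x)$.

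The normalization condition is routine. I would compute $J(x)\cdot\phi = \phi(x) = \mathcal{L}(x)\cdot x = ||x||^2$ using property (b) of $\mathcal{L}$ and the definition of $J$, and then invoke the earlier proposition that $\mathcal{L}$ is norm-preserving to obtain $||\phi||_* = ||\mathcal{L}(x)||_* = ||x||$. Together these give $||\phi||_*^2 = ||x||^2 = J(x)\cdot\phi$, as required.

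The geometric content lies in the kernel condition. First I would note that $\ker J(x) = \{\psi \in (\mathbb{R}^n)^* : \psi(x) = 0\}$ is the annihilator of $x$, hence a hyperplane in $(\mathbb{R}^n)^*$. To show that $\phi$ is Birkhoff left-orthogonal to it, I would take an arbitrary $\psi$ with $\psi(x) = 0$ and test the supremum defining $||\cdot||_*$ against the single unit vector $x/||x|| \in B$, obtaining for every $t \in \mathbb{R}$
\begin{align*} ||\phi + t\psi||_* \geq (\phi + t\psi)\!\left(\frac{x}{||x||}\right) = \frac{||x||^2 + 0}{||x||} = ||x|| = ||\phi||_*. \end{align*}
This yields $\phi \dashv_B \psi$, and since $\psi$ was an arbitrary element of $\ker J(x)$, it gives $\phi \dashv_B \ker J(x)$. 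At this stage $\ker J(x)$ is established as \emph{a} hyperplane right-orthogonal to $\phi$.

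The main obstacle is the passage from ``a'' to ``the'' right-orthogonal hyperplane: I must know that the Birkhoff right-orthogonal hyperplane to $\phi$ is unique, which is precisely the smoothness of the dual norm $||\cdot||_*$. This rests on the standard duality fact that the dual of a strictly convex norm is smooth (and the dual of a smooth norm is strictly convex), which is also what guarantees that $\mathcal{L}^*$ is well-defined; I would state this explicitly. Granting it, uniqueness forces $\ker J(x) = \ker\mathcal{L}^*(\phi)$, so that both defining properties hold and $\mathcal{L}^*(\phi) = J(x)$. Reading this as $\mathcal{L}^*\circ\mathcal{L} = J$ and composing on the right with the bijection $\mathcal{L}^{-1}$ gives $\mathcal{L}^* = J\circ\mathcal{L}^{-1}$, while the injectivity of $\mathcal{L}^*$ upgrades the one implication to the claimed equivalence.
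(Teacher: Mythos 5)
Your proposal is correct, and its geometric core is the same computation the paper uses for the implication $\phi = \mathcal{L}(x) \Rightarrow \mathcal{L}^*(\phi) = J(x)$: the normalization $J(x)\cdot\phi = \phi(x) = ||x||^2 = ||\phi||_*^2$ via the norm-preservation of $\mathcal{L}$, and the verification that every $\psi$ with $\psi(x) = 0$ satisfies $||\phi + t\psi||_* \geq ||\phi||_*$, so that $\mathrm{ker}(J(x))$ is Birkhoff right-orthogonal to $\phi$. The structural difference is in the converse. The paper proves $\mathcal{L}^*(\phi) = J(x) \Rightarrow \phi = \mathcal{L}(x)$ by a second, independent geometric argument: from $\mathcal{L}^*(\phi)\cdot\phi = ||\phi||_*^2$ it deduces that the dual norm of $\phi$ is attained at $x$, shows that $\mathrm{ker}(\phi)$ is contained in the supporting hyperplane of $B$ at $x$, and concludes by equality of dimensions. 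You instead obtain the converse formally, from the identity $\mathcal{L}^*\circ\mathcal{L} = J$ together with injectivity of $\mathcal{L}^*$ (equivalently, of $J$) and bijectivity of $\mathcal{L}$; this is shorter and isolates the geometric input in one place, at the cost of leaning on the bijectivity of Legendre transforms, which the paper has already asserted, so it is legitimate. Two smaller points of comparison: first, you make explicit that the well-definedness of $\mathcal{L}^*$, and your passage from ``a'' right-orthogonal hyperplane to ``the'' right-orthogonal hyperplane, require smoothness of the dual norm, i.e., the standard duality between strict convexity and smoothness; the paper relies on the same fact silently (its step asserting that $\psi$ lies in the supporting hyperplane of $B^*$ at $\phi$, hence in $\mathrm{ker}(\mathcal{L}^*(\phi))$, uses uniqueness of that hyperplane), so your explicitness is an improvement, though that duality fact should be cited or proved rather than merely ``granted''. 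Second, where you invoke that uniqueness to identify $\mathrm{ker}(J(x))$ with $\mathrm{ker}(\mathcal{L}^*(\phi))$, the paper uses the inclusion $\mathrm{ker}(J(x)) \subseteq \mathrm{ker}(\mathcal{L}^*(\phi))$ plus a dimension count; these two finishing moves are interchangeable.
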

\begin{proof} Assume first that $\mathcal{L}^*(\phi) = J(x)$. Since $J$ is linear, and the Legendre transform is always homogeneous of degree one, we may assume that $||x|| = 1$ (observe also that the case $x = 0$ is trivial). Noticing that $\mathcal{L}^*$ and $J$ are norm-preserving, we have
\begin{align*}\phi(x) = J(x)\cdot\phi = \mathcal{L}^*(\phi)\cdot\phi = ||\phi||_*^2 = ||\mathcal{L}^*(\phi)||_{**}^2 = ||J(x)||^2_{**} = ||x||^2 = 1.
\end{align*}
It follows that 
\begin{align*} 1 = \phi(x) = ||\phi||_* = \sup\{\phi(y):y \in B\}, 
\end{align*}
that is, the dual norm of $\phi$ is attained at $x$. In particular, we have that $\mathcal{L}(x)\cdot x = \phi(x)$. For any $z \in \mathrm{ker}(\phi)$ the inequality
\begin{align*} 1 \geq \phi\!\left(\frac{x+tz}{||x+tz||}\right) = \frac{1}{||x+tz||}
\end{align*}
holds for any $t \in \mathbb{R}$. Thus, $||x+tz|| \geq 1 = ||x||$ for every $t \in \mathbb{R}$, meaning that $x \dashv_B z$. Denoting by $\mathbf{h}$ the hyperplane which supports $B$ at $x$, it follows that $\mathrm{ker}(\phi) \subseteq \mathbf{h}$. Since both are $(n-1)$-dimensional vector subspaces, we get that $\mathrm{ker}(\phi) = \mathbf{h}$, and this proves that $\phi = \mathcal{L}(x)$. 

Now assume that $\mathcal{L}(x) = \phi$, and still consider that $||x|| = 1$. Since the Legendre transform is norm-preserving, we have $||\phi||_* = ||\mathcal{L}(x)||_* = ||x|| = 1$. Also,
\begin{align*} J(x)\cdot\phi = \phi(x) = \mathcal{L}(x)\cdot x = ||x||^2 = 1.
\end{align*}
On the other hand,
\begin{align*} \mathcal{L}^*(\phi)\cdot\phi = ||\phi||^2_* = 1,
\end{align*}
from where we get that $J(x)\cdot\phi = \mathcal{L}^*(\phi)\cdot\phi$. It remains to prove that $J(x)$ and $\mathcal{L}^*(\phi)$ have the same kernel. If $\psi \in \mathrm{ker}(J(x))$, then $0 = J(x)\cdot\psi = \psi(x)$, and this leads to
\begin{align*} ||\phi+t\psi||_* \geq \phi(x)+t\psi(x) = \phi(x) = 1 = ||\phi||_*
\end{align*}
for any $t \in \mathbb{R}$. Thus, $\psi$ is a vector of the supporting hyperplane of the dual unit ball $B^*$ at $\phi$, and hence $\psi \in \mathrm{ker}(\mathcal{L}^*(\phi))$. It follows that $\mathrm{ker}(J(x))\subseteq\mathrm{ker}(\mathcal{L}^*(\phi))$, and using again the fact that both are $(n-1)$-dimensional linear subspaces, we obtain that the equality holds. Therefore, $\mathcal{L}^*(\phi) = J(x)$. 

\end{proof}

\begin{remark} What we have proved is that, up to the canonical identification between $\mathbb{R}^n$ and $(\mathbb{R}^n)^{**}$, the Legendre transform is self-dual. Notice that the proof relies heavily on the fact that the kernel of a non-zero functional is precisely the hyperplane which supports the unit ball in the boundary point where its dual norm is attained.
\end{remark}

\begin{coro} The inverse of the Legendre transform of a smooth and strictly convex normed space is continuous.
\end{coro}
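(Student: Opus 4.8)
The plan is to deduce the continuity of $\mathcal{L}^{-1}$ directly from the two results already established: the previous proposition, which identifies $\mathcal{L}^* = J \circ \mathcal{L}^{-1}$, and the proposition proving continuity of the Legendre transform of an arbitrary smooth and strictly convex normed space. The key observation is that the continuity proof for the Legendre transform was stated for \emph{any} smooth and strictly convex norm on $\mathbb{R}^n$, so it applies verbatim to the dual space $((\mathbb{R}^n)^*, ||\cdot||_*)$, provided we know that the dual norm is itself smooth and strictly convex.

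Concretely, I would first recall that for a finite-dimensional normed space, the dual unit ball $B^*$ is smooth if and only if $B$ is strictly convex, and $B^*$ is strictly convex if and only if $B$ is smooth. Since our ambient norm $||\cdot||$ is assumed both smooth and strictly convex, the dual norm $||\cdot||_*$ is also both smooth and strictly convex. Therefore the continuity proposition applies to $((\mathbb{R}^n)^*, ||\cdot||_*)$, giving that its Legendre transform $\mathcal{L}^*\colon (\mathbb{R}^n)^* \to (\mathbb{R}^n)^{**}$ is continuous.

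Next I would invoke the identity $\mathcal{L}^* = J \circ \mathcal{L}^{-1}$ from the preceding proposition. Since $J$ is a norm-preserving isomorphism (hence a homeomorphism), its inverse $J^{-1}$ is continuous, and we may write $\mathcal{L}^{-1} = J^{-1} \circ \mathcal{L}^*$. As a composition of two continuous maps, $\mathcal{L}^{-1}$ is continuous, which is exactly the claim.

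The only step that requires genuine care is the duality relationship between smoothness and strict convexity of $B$ and $B^*$; everything else is a formal composition argument. In finite dimensions this duality is classical and follows from the fact, emphasized in the remark after the previous proposition, that the kernel of a norming functional is precisely the supporting hyperplane of the unit ball at the point where the dual norm is attained: a smooth boundary point of $B$ corresponds to a unique norming functional (an extreme point of $B^*$, giving strict convexity of $B^*$), and an extreme point of $B$ corresponds to a unique supporting functional direction (smoothness of $B^*$). I expect this to be the main obstacle only in the sense that one must state it cleanly rather than rederive it; alternatively, one could bypass it entirely by adapting the three-step Arzelà–Ascoli argument of the continuity proposition directly to $\mathcal{L}^{-1}$, but the composition route is shorter and reuses the machinery already in place.
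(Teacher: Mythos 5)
Your proof is correct and is essentially the paper's own argument: the paper likewise writes $\mathcal{L}^{-1} = J^{-1}\circ\mathcal{L}^*$ and concludes by continuity of $\mathcal{L}^*$ (as the Legendre transform of the dual space) composed with the continuous map $J^{-1}$. You are in fact more careful than the paper, which invokes ``$\mathcal{L}^*$ is continuous because it is a Legendre transform'' without noting that this requires the dual norm $||\cdot||_*$ to be smooth and strictly convex --- the classical duality fact you state explicitly.
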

\begin{proof} Simply observe that $\mathcal{L}^{-1} = J^{-1}\circ\mathcal{L}^*$, and the latter is the composition of continuous maps ($\mathcal{L}^*$ is continuous because it is a Legendre transform).

\end{proof}

Next we give a characterization of the norm gradient of a differentiable convex function. 

\begin{teo}\label{legendregrad} Let $f\colon\mathbb{R}^n\rightarrow\mathbb{R}$ be a convex function differentiable at a point $x \in \mathbb{R}^n$. Then
\begin{align}\label{graddefiequiv} f(y) - f(x) \geq \mathcal{L}(\nabla f(x))\cdot(y-x)
\end{align}
for any $y \in \mathbb{R}^n$. The converse is also true: the norm gradient $\nabla f(x)$ is the unique vector for which this inequality holds for every $y \in \mathbb{R}^n$.
\end{teo}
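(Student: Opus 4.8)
The plan is to reduce the entire statement to the single functional identity $df_x = \mathcal{L}(\nabla f(x))$ in $(\mathbb{R}^n)^*$; once this is established, both the inequality and its converse follow almost immediately from standard facts about convex functions together with the bijectivity of $\mathcal{L}$.

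First I would record the classical supporting-plane inequality for differentiable convex functions. From convexity, for $t \in (0,1)$ one has $f(x + t(y-x)) \leq (1-t)f(x) + tf(y)$, hence $\tfrac{f(x+t(y-x)) - f(x)}{t} \leq f(y) - f(x)$; letting $t \to 0^+$ and using $f'_+(x, y-x) = df_x(y-x)$ (differentiability, via Proposition \ref{diffconvexchar}) yields $f(y) - f(x) \geq df_x(y-x)$ for every $y$. This is exactly the desired inequality, except with $df_x$ in place of $\mathcal{L}(\nabla f(x))$.

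Second — the technical heart — I would prove $df_x = \mathcal{L}(\nabla f(x))$. If $c = f(x)$ is a global minimum, then $\nabla f(x) = 0$ by definition and $df_x = 0$ (an interior minimizer of a differentiable function has vanishing differential), so both sides are the null functional. If $c$ is not a global minimum, let $\mathbf{h}$ be the unique hyperplane supporting $\{f \leq c\}$ at $x$ (existence and uniqueness are given by the proposition following Proposition \ref{convexsublevel}). The proof of that proposition shows $df_x$ vanishes on $\mathbf{h}$, and since $df_x \neq 0$ in this case (else $x$ would be a minimizer), we get $\ker df_x = \mathbf{h}$. On the other hand, property (a) in the definition of the norm gradient states $\nabla f(x) \dashv_B \mathbf{h}$, which is precisely the defining condition $\ker \mathcal{L}(\nabla f(x)) = \mathbf{h}$. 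Thus $df_x$ and $\mathcal{L}(\nabla f(x))$ share the kernel $\mathbf{h}$ and are proportional. Evaluating both at $\nabla f(x)$ gives $df_x(\nabla f(x)) = ||\nabla f(x)||^2 = \mathcal{L}(\nabla f(x))\cdot \nabla f(x)$, using property (b) of the norm gradient and property (b) of the Legendre transform; since this common value is positive we have $\nabla f(x) \notin \mathbf{h}$, so the proportionality constant equals $1$ and $df_x = \mathcal{L}(\nabla f(x))$. Substituting into the inequality from the first step proves (\ref{graddefiequiv}).

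For the converse, suppose $v \in \mathbb{R}^n$ satisfies $f(y) - f(x) \geq \mathcal{L}(v)\cdot(y-x)$ for all $y$. Taking $y = x + tu$ with $t > 0$, dividing by $t$, and letting $t \to 0^+$ gives $df_x(u) \geq \mathcal{L}(v)\cdot u$; applying this also to $-u$ forces $df_x(u) = \mathcal{L}(v)\cdot u$ for every $u$, i.e. $\mathcal{L}(v) = df_x = \mathcal{L}(\nabla f(x))$, and since $\mathcal{L}$ is a bijection we conclude $v = \nabla f(x)$. The main obstacle I anticipate is the middle step: correctly identifying $\ker df_x$ with the supporting hyperplane $\mathbf{h}$ of the sub-level set appearing in the definition of the norm gradient, and then pinning the normalization constant to $1$. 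Everything hinges on the fact that $df_x$ annihilates exactly the supporting directions of $\{f \leq c\}$, together with the matching normalizations $df_x(\nabla f(x)) = ||\nabla f(x)||^2$ and $\mathcal{L}(\nabla f(x))\cdot \nabla f(x) = ||\nabla f(x)||^2$; the global-minimum case must be handled separately, since there $\nabla f(x)$ is set to $0$ by convention.
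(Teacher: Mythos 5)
Your proof is correct, and it is organized differently from the paper's. You reduce everything to the single functional identity $df_x = \mathcal{L}(\nabla f(x))$, proved by linear algebra: both functionals kill the direction space of the unique supporting hyperplane $\mathbf{h}$ of $\{f \leq c\}$ at $x$ (using the unnumbered proposition after Proposition \ref{convexsublevel} and the definition of $\mathcal{L}$), hence they are proportional, and the matching normalizations $df_x(\nabla f(x)) = ||\nabla f(x)||^2 = \mathcal{L}(\nabla f(x))\cdot\nabla f(x)$ pin the constant to $1$. The paper never states this identity in the proof; instead it argues computationally, decomposing $y - x = \lambda v + z$ with $z \in \mathbf{h}$, estimating $f'_+(x,\lambda v + z) \geq \lambda||v||^2$ (it only needs $f'_+(x,z) \geq 0$, not $= 0$) and computing $\mathcal{L}(v)\cdot(y-x) = \lambda||v||^2$. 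The difference is sharpest in the converse: the paper re-verifies that any $v$ satisfying the inequality has the two geometric defining properties of the norm gradient (its right-orthogonal hyperplane supports the sub-level set, and $df_x(v) = ||v||^2$), with a separate argument for the global-minimum case; you instead extract $\mathcal{L}(v) = df_x$ from two-sided difference quotients and finish by injectivity of $\mathcal{L}$, which handles both cases uniformly. Your route is shorter and in effect obtains the formula of Corollary \ref{singletondiff} ($df_x(u) = \mathcal{L}(\nabla f(x))\cdot u$) without any sub-differential machinery; the paper's more hands-on converse has the advantage that its method (supporting hyperplane plus normalization estimates) is exactly the template reused later for non-differentiable $f$ in Theorems \ref{normsubgradgeom} and \ref{subgradientchar}, where your kernel argument would not be available.
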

\begin{proof} We have to use some machinery from the theory of convex functions. First of all, we state and prove an inequality which ``captures" the convexity in terms of one-sided derivatives. We have
\begin{align}\label{ineqconvex1} f(x+w) - f(x) \geq f'_+(x,w)
\end{align}
for any $x,w \in \mathbb{R}^n$. To prove this inequality, observe that for any $\varepsilon > 0$ the inequality
\begin{align*} f(x+\varepsilon w) -f(x) = f(\varepsilon(x+w)+(1-\varepsilon)x) -f(x) \leq \varepsilon f(x+w) + (1-\varepsilon)f(x) -f(x) = \\ = \varepsilon (f(x+w)-f(x))
\end{align*}
holds, yielding
\begin{align*} f(x+w)-f(x) \geq \frac{f(x+\varepsilon w)-f(x)}{\varepsilon}.
\end{align*}
Hence, letting $\varepsilon \rightarrow 0^+$ we get (\ref{ineqconvex1}).

For simplicity of notation, we write $v = \nabla f(x)$ and $c = f(x)$. First assume that $c$ is not a global minimum of $f$. Then $v \neq 0$, and there is a unique hyperplane $\mathbf{h}$ supporting $\{f\leq c\}$ at $x$. Consequently, we have the direct sum

\begin{align*} \mathbb{R}^n = \mathbf{h} \oplus \mathrm{span}\{v\},
\end{align*}
and since any translation is bijective, we get that any point $y \in \mathbb{R}^n$ can be written in the form
\begin{align*} y = \lambda v + x + z,
\end{align*}
for some $\lambda \in \mathbb{R}$ and some $z \in \mathbf{h}$. From inequality (\ref{ineqconvex1}) and the linearity of $f'_+(x,\cdot)$ we get
\begin{align}\label{ineqconvex2} f(y) - f(x) \geq f'_+(x,\lambda v+z) = \lambda f'_+(x,v) + f'_+(x,z).
\end{align}
Since $f'_+(x,\cdot)$ is the Fr\'{e}chet derivative of $f$ at $x$, we get from the definition of the norm gradient that
\begin{align*} f'_+(x,v) = df_x(v) = ||v||^2.
\end{align*}
On the other hand, since $z \in \mathbf{h}$ we have that, for any $\varepsilon \in \mathbb{R}$, the point $x+\varepsilon z$ lies in the hyperplane which supports $\{f\leq c\}$ at $x$. It follows that $f(x+\varepsilon z) \geq c = f(x)$ for any $\varepsilon > 0$, and this leads to
\begin{align*} f'_+(x,z) = \lim_{\varepsilon\rightarrow 0^+}\frac{f(x+\varepsilon z)-f(x)}{\varepsilon} \geq 0.
\end{align*}
Plugging this inequality in (\ref{ineqconvex2}), we get
\begin{align}\label{ineqconvex3} f(y) - f(x) \geq \lambda f'_+(x,v) = \lambda||v||^2.
\end{align}
Now we observe that
\begin{align*} \mathcal{L}(v)\cdot(y-x) = \mathcal{L}(v)\cdot(\lambda v + z) = \lambda\mathcal{L}(v)\cdot v + \mathcal{L}(v)\cdot z\,.
\end{align*}
From the definition of the Legendre transform we have that $\mathcal{L}(v)\cdot z = 0$, since $z$ is a vector of the hyperplane which is Birkhoff right-orthogonal, and $\mathcal{L}(v)\cdot v = ||v||^2$. Consequently, $\mathcal{L}(v)\cdot(y-x) = \lambda||v||^2$. Together with (\ref{ineqconvex3}) this gives the desired inequality.

Now we assume that $c = f(x)$ is a global minimum of $f$, and we have $\nabla f(x) = 0$ by the definition. As a consequence, we get that $f(y) - f(x) \geq 0 = \mathcal{L}(\nabla f(x))\cdot (y-x)$ for any $y \in \mathbb{R}^n$.

For the converse, suppose first that $c = f(x)$ is not the global minimum of $f$, and assume that $v \in \mathbb{R}^n$ is such that $f(y) - f(x) \geq \mathcal{L}(v)\cdot(y-x)$. Notice that we must have $v \neq 0$ (otherwise $c$ is a global minimum), and let $\mathbf{h}$ be the hyperplane which is Birkhoff right-orthogonal to $v$, translated to pass through $x$. We claim that $\mathbf{h}$ supports the sub-level set $\{f \leq c\}$ at $x$. Indeed, if this is not true, then we can take a point $y \in \mathbf{h}\cap\mathrm{int}\{f\leq c\} = \mathbf{h}\cap\{f < c\}$. It follows that $v \dashv_B y-x$, which means that
\begin{align*} \mathcal{L}(v)\cdot(y-x) = 0.
\end{align*}
Thus,
\begin{align*} f(y) - f(x) < c - f(x) = c-c = 0 = \mathcal{L}(v)\cdot(y-x),
\end{align*}
and this contradiction shows that $v \dashv_B \mathbf{h}$. It still remains to prove that $df_x(v) = ||v||^2$. First we notice that for any $\lambda > 0$ we have
\begin{align*} f(x+\lambda x) - f(x) \geq \mathcal{L}(v)\cdot(\lambda v) = \lambda ||v||^2,
\end{align*}
from which we get
\begin{align*} df_x(v) = f'_+(x,v) = \lim_{\lambda \rightarrow 0^+}\frac{f(x+\lambda v)-f(x)}{\lambda} \geq ||v||^2.
\end{align*}
For the reverse inequality we observe that with $\lambda > 0$ also
\begin{align*} f(x-\lambda v) - f(x) \geq \mathcal{L}(v)\cdot(-\lambda v) = -\lambda||v||^2,
\end{align*}
holds, and hence
\begin{align*} ||v||^2 \geq \lim_{\lambda \rightarrow 0^+}\frac{f(x-\lambda v)-f(x)}{-\lambda} = df_x(v).
\end{align*}

Finally, assume that $c = f(x)$ is a global minimum of $f$, and let $v \in \mathbb{R}^n$ be a vector such that $f(y) - f(x) \geq \mathcal{L}(v)\cdot(y-x)$ for every $y \in \mathbb{R}^n$. Since $c$ is a global minimum, we have that $df_x = 0$, and then, in particular, $df_x(v) = 0$. For any $\lambda > 0$ we have
\begin{align*} f(x+\lambda v) - f(x) \geq \mathcal{L}(v)\cdot(\lambda v) = \lambda||v||^2,
\end{align*}
and, similarly to what we have done before, we obtain
\begin{align*} ||v||^2\leq \lim_{\lambda\rightarrow 0^+}\frac{f(x+\lambda v)-f(x)}{\lambda} = f'_+(x,v) = df_x(v) = 0,
\end{align*}
implying also that $v = 0$. The proof is complete.

\end{proof}

Notice carefully that inequality (\ref{graddefiequiv}) provides an equivalent definition for the norm gradient of a convex function $f$ in a point $x$ where $f$ is differentiable. Inspired by that, we extend this notion for a convex function $f\colon \mathbb{R}^n\rightarrow \mathbb{R}$ which is not necessarily differentiable. We say that $v \in \mathbb{R}^n$ is a \emph{norm sub-gradient} of $f$ at $x$ if
\begin{align}\label{normsubgradientdef} f(y) - f(x) \geq \mathcal{L}(v)\cdot(y-x)
\end{align}
for any $y \in \mathbb{R}^n$. For each $x \in \mathbb{R}^n$, the set $\partial f(x)$ of the norm sub-gradients of $f$ at $x$ is called the \emph{norm sub-differential} of $f$ at $x$. It is immediate to check that $\partial f(x)$ is always closed. It is also easy to see that $f(x) = c$ is a global minimum of $f$ if and only if $0 \in \partial f(x)$. At this point, we warn the reader that other generalizations of the concept of sub-gradient were studied, also in view of differentiability properties (see, e.g., \cite{borwein2} and \cite{clarke}). As in the Euclidean case, we have a characterization of the norm sub-gradients in terms of one-sided derivatives (see \cite{borwein}, for example).

\begin{lemma}\label{maxlemma} Let $f\colon\mathbb{R}^n\rightarrow\mathbb{R}$ be a convex function, and let $x \in \mathbb{R}^n$. We have that $v \in \partial f(x)$ if and only if
\begin{align*} f'_+(x,u) \geq \mathcal{L}(v)\cdot u
\end{align*}
for every $u \in \mathbb{R}^n$.
\end{lemma}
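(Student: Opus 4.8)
The plan is to prove the two implications separately, in each case reducing everything to the defining inequality (\ref{normsubgradientdef}) of a norm sub-gradient and to the single observation that, although the Legendre transform $\mathcal{L}$ itself is nonlinear, the object $\mathcal{L}(v)$ is a \emph{fixed linear} functional, so that $\mathcal{L}(v)\cdot(tu) = t\,\mathcal{L}(v)\cdot u$ and $\mathcal{L}(v)\cdot(y-x)$ is linear in $y$.

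For the implication ``$v \in \partial f(x) \Rightarrow f'_+(x,u) \geq \mathcal{L}(v)\cdot u$'', I would fix an arbitrary direction $u$ and specialize (\ref{normsubgradientdef}) to the points $y = x + tu$ with $t > 0$. Using linearity of $\mathcal{L}(v)$ this gives $f(x+tu) - f(x) \geq t\,\mathcal{L}(v)\cdot u$. Dividing by $t > 0$ and letting $t \to 0^+$ (the limit exists because $f$ is convex, as recalled in Section \ref{convex}) produces exactly $f'_+(x,u) \geq \mathcal{L}(v)\cdot u$.

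For the converse I would assume $f'_+(x,u) \geq \mathcal{L}(v)\cdot u$ for every $u$ and recover (\ref{normsubgradientdef}). Given an arbitrary $y$, set $w = y-x$. Inequality (\ref{ineqconvex1}), established within the proof of Theorem \ref{legendregrad}, yields $f(y) - f(x) = f(x+w) - f(x) \geq f'_+(x,w)$, while the hypothesis gives $f'_+(x,w) \geq \mathcal{L}(v)\cdot w = \mathcal{L}(v)\cdot(y-x)$. Chaining these two inequalities shows $f(y)-f(x)\geq \mathcal{L}(v)\cdot(y-x)$ for all $y$, i.e. $v \in \partial f(x)$.

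I do not expect a genuine obstacle here, since the lemma is essentially the normed-space translation of the classical Euclidean characterization; the only points requiring care are the guaranteed existence of the one-sided derivative $f'_+(x,u)$ for convex $f$ and the (easily overlooked) fact that passing from the difference quotient to the limit preserves the inequality because the division is by a positive $t$.
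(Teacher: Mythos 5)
Your proof is correct and follows essentially the same route as the paper's own argument: the forward direction specializes the sub-gradient inequality (\ref{normsubgradientdef}) to $y = x+tu$, divides by $t>0$, and passes to the limit, while the converse chains inequality (\ref{ineqconvex1}) with the hypothesis applied to $u = y-x$. No gaps; your explicit remark that $\mathcal{L}(v)$ is a fixed linear functional (so that $\mathcal{L}(v)\cdot(tu) = t\,\mathcal{L}(v)\cdot u$) is exactly the homogeneity step the paper uses implicitly.
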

\begin{proof} Assume first that $v \in \partial f(x)$. Then for each $u \in \mathbb{R}^n$ and any $\lambda > 0$ we have that
\begin{align*} f(x+\lambda u) - f(x) \geq \mathcal{L}(v)\cdot\lambda u = \lambda \mathcal{L}(v)\cdot u.
\end{align*}
It follows that
\begin{align*} \frac{f(x+\lambda u)-f(x)}{\lambda} \geq \mathcal{L}(v)\cdot u,
\end{align*}
for any $\lambda > 0$. Letting $\lambda \rightarrow 0^+$, we have the desired inequality.

Now assume that $v \in \mathbb{R}^n$ is a vector such that $f'_+(x,u) \geq \mathcal{L}(v)\cdot u$. If $y \in\mathbb{R}^n$, then from inequality (\ref{ineqconvex1}) we get
\begin{align*} f(y) - f(x) \geq f'_+(x,y-x) \geq \mathcal{L}(v)\cdot(y-x),
\end{align*}
and this concludes the proof.

\end{proof}

\begin{teo}\label{maxtheorem} If $f\colon\mathbb{R}^n\rightarrow\mathbb{R}$ is a convex function, then for any $x \in \mathbb{R}^n$ we have that $\partial f(x) \neq \emptyset$ and
\begin{align*} f'_+(x,u) = \max\{\mathcal{L}(w)\cdot u:w \in \partial f(x)\},
\end{align*}
for each $u \in \mathbb{R}^n$. 
\end{teo}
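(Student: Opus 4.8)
The plan is to reduce the statement to the classical duality between a sublinear function and the linear functionals it dominates, transporting everything through the Legendre transform. Writing $g := f'_+(x,\cdot)$, recall that $g$ is sublinear. By Lemma \ref{maxlemma}, a vector $w$ belongs to $\partial f(x)$ precisely when $\mathcal{L}(w)\cdot u \leq g(u)$ for every $u \in \mathbb{R}^n$; since $\mathcal{L}$ is a bijection onto $(\mathbb{R}^n)^*$, this means that the image $\mathcal{L}(\partial f(x))$ coincides with the set
\begin{align*} A := \{\phi \in (\mathbb{R}^n)^* : \phi(u) \leq g(u) \text{ for every } u \in \mathbb{R}^n\}
\end{align*}
of linear functionals dominated by $g$. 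Under this translation, the claim $\partial f(x) \neq \emptyset$ becomes $A \neq \emptyset$, and the identity to be proved becomes $g(u) = \max\{\phi(u) : \phi \in A\}$.

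Since $\phi(u) \leq g(u)$ holds for every $\phi \in A$ by the very definition of $A$, I would next observe that it suffices to produce, for each fixed direction $u_0$, a single functional $\phi \in A$ with $\phi(u_0) = g(u_0)$: such a $\phi$ simultaneously witnesses that $A$ is non-empty and that the supremum defining the right-hand side is attained and equals $g(u_0)$. To construct it, I would first define a linear functional $\phi_0$ on the line $\mathrm{span}(u_0)$ by $\phi_0(tu_0) = t\,g(u_0)$ and check that $\phi_0 \leq g$ there: for $t \geq 0$ this is positive homogeneity, and for $t < 0$ it follows from subadditivity together with $g(0) = 0$, which give $g(u_0) + g(-u_0) \geq 0$.

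The main step is then the Hahn--Banach theorem: because $g$ is sublinear and $\phi_0$ is dominated by $g$ on $\mathrm{span}(u_0)$, its dominated-extension form yields a linear extension $\phi$ of $\phi_0$ to all of $\mathbb{R}^n$ satisfying $\phi \leq g$ everywhere. This $\phi$ lies in $A$ and obeys $\phi(u_0) = g(u_0)$, which closes the argument. Transporting back through $\mathcal{L}^{-1}$, the vector $w = \mathcal{L}^{-1}(\phi)$ is the sought norm sub-gradient at which $\mathcal{L}(w)\cdot u_0 = f'_+(x,u_0)$. The only genuine point requiring care is the verification that $g$ satisfies the sublinearity hypotheses of Hahn--Banach and that $\phi_0$ is correctly dominated; everything else is a bookkeeping translation through the bijection $\mathcal{L}$, whose nonlinearity is harmless here since it is used only to identify each $w$ with the fixed linear functional $\mathcal{L}(w)$.
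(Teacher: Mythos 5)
Your proof is correct, but it takes a genuinely different route from the paper. The paper follows the constructive argument of Borwein--Lewis: starting from $g_0 = f'_+(x,\cdot)$ and a basis $\{e_1,\ldots,e_n\}$ with $e_1 = u$, it iterates one-sided directional differentiation, $g_m := (g_{m-1})'_+(e_m,\cdot)$, and uses Lemma \ref{sublinearproperties} to show that each step enlarges the linearity space, so that $g_n$ is linear, dominated by $g_0$, and agrees with $g_0$ at $u$; the subgradient is then $\mathcal{L}^{-1}(g_n)$. You instead reduce everything, exactly as the paper does via Lemma \ref{maxlemma} and the bijectivity of $\mathcal{L}$, to the statement that a sublinear $g$ is the pointwise maximum of the linear functionals it dominates, and then invoke the dominated-extension form of Hahn--Banach applied to $\phi_0(tu_0) = t\,g(u_0)$ on $\mathrm{span}(u_0)$ (your verification of domination for $t<0$ via $g(u_0)+g(-u_0)\geq g(0)=0$ is the right check, and the case $u_0 = 0$ is trivial once $A \neq \emptyset$ is known). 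What each approach buys: yours is shorter and more conceptual, making transparent that the theorem is classical sublinear--linear duality transported through $\mathcal{L}$, at the cost of importing Hahn--Banach as a black box; the paper's is self-contained within its own toolkit (Lemma \ref{sublinearproperties}) and fully constructive. In fact the two are close relatives: each differentiation step $g_{m-1} \mapsto g_m$ adds one dimension to the linearity space while preserving domination and the value at $u$, which is precisely the one-dimension-at-a-time extension step in the finite-dimensional induction proof of Hahn--Banach, so the paper's construction can be read as an explicit unwinding of the very theorem you cite.
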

\begin{proof} By the previous lemma, we already have that the inequality
\begin{align*} \max\{\mathcal{L}(w)\cdot u:w \in \partial f(x)\} \leq f'_+(x,u)
\end{align*}
holds for any $u \in \mathbb{R}^n$. Hence we only have to show that for an arbitrarily given vector $u \in \mathbb{R}^n$ there exists a vector $w \in \partial f(x)$ such that $\mathcal{L}(w)\cdot u = f'_+(x,u)$. 

To prove that, we first assume that $u \neq 0$. Choose a basis $\{e_1,\ldots,e_n\}$ of $\mathbb{R}^n$ with the property that $e_1 = u$. Let $g_0(\cdot):=f'_+(x,\cdot)$, and define recursively functions $g_1,\ldots,g_n$ by setting $g_m(\cdot) := (g_{m-1})'_+(e_m,\cdot)$. Each of these functions (including $g_0$) is sub-linear, and from property \textbf{(d)} of Lemma \ref{sublinearproperties} we have that
\begin{align*} \mathrm{lin}(g_m) \supseteq \mathrm{lin}(g_{m-1}) + \mathrm{span}(e_m)
\end{align*}
for each $m = 1,\ldots,n$. From property \textbf{(e)} of Lemma \ref{sublinearproperties} we have that $g_n$ is a linear functional. Since the Legendre transform is a bijection, there exists $w \in \mathbb{R}^n$ such that $g_n = \mathcal{L}(w)$. We claim that $w$ is a norm sub-gradient such that $\mathcal{L}(w)\cdot u = f'_+(x,u)$. To check that, we first observe that by property \textbf{(b)} of Lemma \ref{sublinearproperties} the inequalities $g_0 \geq g_1\geq\ldots \geq g_n$ hold. This yields
\begin{align*} \mathcal{L}(w)\cdot(y-x) = g_n(y-x) \leq g_0(y-x) = f'_+(x,y-x) \leq f(y) - f(x),
\end{align*}
for any $x,y \in \mathbb{R}^n$, where the last inequality comes from (\ref{ineqconvex1}). This shows that $w \in \partial f(x)$ and, in particular, we also get that $\partial f(x) \neq \emptyset$ (notice that this construction holds true for any $u \neq 0$). Finally, from properties \textbf{(a)} and \textbf{(b}) of Lemma \ref{sublinearproperties} it follows that
\begin{align*} g_n(u) \leq g_0(u) = -(g_0)' _+(u,-u) = -(g_0)'_+(e_1,-u) = -g_1(-u) \leq -g_n(-u) = g_n(u),
\end{align*}
from where we get that $\mathcal{L}(w)\cdot u = g_n(u) = g_0(u) = f'_+(x,u)$. This concludes the proof of the case $u \neq 0$. If $u = 0$, then any $w \in \partial f(x)$ satisfies $\mathcal{L}(w)\cdot u = 0 = f'_+(x,u)$. Since we already have that $\partial f(x) \neq \emptyset$, the proof is finished. 

\end{proof}

\begin{remark}\label{ident} This proof appears for the Euclidean sub-case in \cite[Theorem 3.1.8]{borwein}. The reader may notice that the proof constructs a linear functional rather than a sub-gradient. Only after that, we identify this linear functional with a vector. In the classical theory, this is made via an inner product, and in our case we use the Legendre transform. 
\end{remark}

The next and important corollary is also an immediate analogue of the Euclidean sub-case. It states that we can ``detect" differentiability of a convex point at a given interior point of its domain looking at the norm sub-differential at this point.

\begin{coro}\label{singletondiff} A convex function $f\colon\mathbb{R}^n\rightarrow\mathbb{R}$ is differentiable at $x\in \mathbb{R}^n$ if and only if $\partial f(x)$ is a singleton. In this case, the unique norm sub-gradient is the norm gradient, and we have
\begin{align*} df_x(u) = \mathcal{L}(\nabla f(x))\cdot u
\end{align*}
for every $u \in \mathbb{R}^n$.
\end{coro}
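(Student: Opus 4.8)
The plan is to prove both directions using the characterization of $f'_+(x,\cdot)$ as a support function of the norm sub-differential established in Theorem \ref{maxtheorem}, together with the differentiability criterion of Proposition \ref{diffconvexchar} (differentiability is equivalent to linearity of $f'_+(x,\cdot)$).

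First I would prove the easier direction: if $f$ is differentiable at $x$, then $\partial f(x)$ is a singleton. Suppose $v,w \in \partial f(x)$. By Lemma \ref{maxlemma}, both satisfy $f'_+(x,u) \geq \mathcal{L}(v)\cdot u$ and $f'_+(x,u)\geq \mathcal{L}(w)\cdot u$ for every $u$. Since $f$ is differentiable, Proposition \ref{diffconvexchar} gives $f'_+(x,u) = df_x(u)$, a linear functional; in particular $f'_+(x,-u) = -f'_+(x,u)$. Applying the sub-gradient inequality to both $u$ and $-u$ yields $\mathcal{L}(v)\cdot u \leq df_x(u)$ and $-\mathcal{L}(v)\cdot u = \mathcal{L}(v)\cdot(-u)\leq df_x(-u) = -df_x(u)$, hence $\mathcal{L}(v)\cdot u = df_x(u)$ for all $u$, and likewise $\mathcal{L}(w)\cdot u = df_x(u)$. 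Thus $\mathcal{L}(v) = \mathcal{L}(w)$ as functionals, and since the Legendre transform is a bijection, $v = w$. This simultaneously identifies the unique element: $\mathcal{L}(v)\cdot u = df_x(u)$, and comparing with Theorem \ref{legendregrad} (where $\nabla f(x)$ satisfies exactly $f(y)-f(x)\geq \mathcal{L}(\nabla f(x))\cdot(y-x)$) shows the unique sub-gradient is $\nabla f(x)$, giving the displayed formula.

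For the converse, assume $\partial f(x) = \{w\}$ is a singleton; I want to conclude $f$ is differentiable. By Theorem \ref{maxtheorem}, $f'_+(x,u) = \max\{\mathcal{L}(v)\cdot u : v \in \partial f(x)\} = \mathcal{L}(w)\cdot u$ for every $u$, because the maximum is taken over the single point $w$. But $u \mapsto \mathcal{L}(w)\cdot u$ is a linear functional, so $f'_+(x,\cdot)$ is linear. By the equivalence (i)$\Leftrightarrow$(ii) of Proposition \ref{diffconvexchar}, this is exactly the condition for $f$ to be Fréchet differentiable at $x$, with $df_x = f'_+(x,\cdot) = \mathcal{L}(w)\cdot(\,\cdot\,)$.

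The main obstacle — though it is modest here — is keeping the two notions of ``gradient'' aligned: the sub-gradient $w$ is a vector in $\mathbb{R}^n$, while its action on directions is through $\mathcal{L}(w) \in (\mathbb{R}^n)^*$, and differentiability is a statement about the linear functional $df_x$. The key that makes everything click is that Theorem \ref{maxtheorem} delivers $f'_+(x,\cdot)$ as the support function of $\mathcal{L}(\partial f(x))$, so singleton-ness of the sub-differential is precisely linearity of this support function, which via Proposition \ref{diffconvexchar} is precisely differentiability; the bijectivity of $\mathcal{L}$ then guarantees that the unique sub-gradient vector is well-defined and coincides with $\nabla f(x)$ from Theorem \ref{legendregrad}.
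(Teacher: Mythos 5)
Your proof is correct and takes essentially the same route as the paper: the converse direction uses Theorem \ref{maxtheorem} to write $f'_+(x,\cdot) = \mathcal{L}(w)\cdot(\,\cdot\,)$ and then Proposition \ref{diffconvexchar} to conclude differentiability, exactly as the paper does, and the displayed formula falls out the same way. The only cosmetic difference is in the forward direction, where you re-derive uniqueness from Lemma \ref{maxlemma} together with injectivity of $\mathcal{L}$, while the paper simply invokes the uniqueness clause of Theorem \ref{legendregrad}; both arguments rest on the same underlying facts.
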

\begin{proof} The uniqueness part of Theorem \ref{legendregrad} already gives that if $f$ is differentiable at $x$, then $\nabla f(x)$ is the unique norm sub-gradient of $f$ at $x$. Hence it remains to prove the converse.

Assume that $\partial f(x) = \{v\}$. Then for any $u \in \mathbb{R}^n$ we have
\begin{align*} f'_+(x,-u) = \mathcal{L}(v)\cdot (-u) = -\mathcal{L}(w)\cdot u = -f'_+(x,u),  
\end{align*}
meaning that every $u \in \mathbb{R}^n$ is a linearity direction of $f'_+(x,\cdot)$, which is, therefore, a linear map. It follows from Proposition \ref{diffconvexchar} that $f$ is differentiable at $x$. 

To finish the proof, just notice that if $f$ is differentiable at $x\in\mathbb{R}^n$, then $\partial f(x) = \{\nabla f(x)\}$, and consequently
\begin{align*}df_x(u) = f'_+(x,u) = \max\{\mathcal{L}(w)\cdot u:w \in \partial f(x)\} = \mathcal{L}(\nabla f(x))\cdot u
\end{align*}
for each $u \in \mathbb{R}^n$. 

\end{proof}

\begin{remark}\label{continuousequiv} We have defined functions of class $C^1$ as differentiable functions whose respective Euclidean gradients are continuous. This is clearly the same as demanding that their respective differential maps are continuous as maps of $\mathbb{R}^n$ onto $(\mathbb{R}^n)^*$. Namely, this follows easily from the fact that the identification between $\mathbb{R}^n$ and $(\mathbb{R}^n)^*$ given by the standard inner product is linear (and hence continuous). Since the Legendre transform and its inverse are also continuous, it follows from the last corollary that a convex differentiable function is of class $C^1$ if and only if its norm gradient is continuous for any norm. 
\end{remark}

For the sake of completeness, we state and prove two other properties of the norm sub-differential which are completely analogous to the usual sub-differential. One of them is a way to ``detect" convexity via the norm sub-differential, and the other one is a version of Rockafellar's theorem. 

\begin{prop} A function $f\colon(\mathbb{R}^n,||\cdot||)\rightarrow\mathbb{R}$ is convex if and only if $\partial f(x) \neq \emptyset$ for every $x\in\mathbb{R}^n$. 
\end{prop}
\begin{proof} We already know that if $f$ is convex, then $\partial f(x)$ is non-empty for every $x \in \mathbb{R}^n$ (from Theorem \ref{maxtheorem}). Hence we prove the converse. Let $x,y \in \mathbb{R}^n$ and $\lambda \in [0,1]$, and observe that we may take a vector $w \in \partial f((1-\lambda)x+\lambda y)$. Therefore,
\begin{align*} f(x) - f((1-\lambda)x + \lambda y) \geq \mathcal{L}(w)\cdot(x-(1-\lambda)x-\lambda y) = \lambda\mathcal{L}(w)\cdot(x-y)
\end{align*}
and
\begin{align*} f(y) - f((1-\lambda x)+\lambda y) \geq \mathcal{L}(w)\cdot(y - (1-\lambda)x - \lambda y) = (1-\lambda)\mathcal{L}(w)\cdot(y-x).
\end{align*}
Multiplying the first inequality by $(1-\lambda)$, the second by $\lambda$, and adding both, we get that
\begin{align*} f((1-\lambda)x+\lambda y) \leq (1-\lambda)f(x) + \lambda f(y),
\end{align*}
and this proves that $f$ is convex. 

\end{proof}

In what follows, we define the \emph{norm sub-differential} of a convex function $f$ as the set $\partial f\subseteq \mathbb{R}^n\times\mathbb{R}^n$ given by
\begin{align*} \partial f := \{(x,w) \in \mathbb{R}^n\times\mathbb{R}^n:w \in \partial f(x)\}. 
\end{align*}
A subset $S\subseteq \mathbb{R}^n\times\mathbb{R}^n$ is said to be \emph{norm cyclically monotonic} if for every $m \in \mathbb{N}$ and any subset $\{(x_0,w_0),\ldots,(x_m,w_m)\} \in S$ the number
\begin{align*} \mathcal{L}(w_0)\cdot(x_1-x_0) + \mathcal{L}(w_1)\cdot(x_2-x_1)+\ldots+ \mathcal{L}(w_{m-1})\cdot(x_m-x_{m-1}) + \mathcal{L}(w_m)\cdot(x_0-x_m)
\end{align*}
is non-positive. If $f$ is a convex function, then it is clear that any finite ordered subset $\{(x_0,w_0)\ldots,(x_m,w_m)\}$ of $\partial f$ is norm cyclically monotonic, because
\begin{align*}\mathcal{L}(w_0)\cdot(x_1-x_0) + \mathcal{L}(w_1)\cdot(x_2-x_1)+\ldots+ \mathcal{L}(w_{m-1})\cdot(x_m-x_{m-1}) + \mathcal{L}(w_m)\cdot(x_0-x_m) \leq \\ \leq f(x_1) - f(x_0) + f(x_2) - f(x_1) + \ldots +f(x_{m}) - f(x_{m-1})+ f(x_0) - f(x_m) = 0.
\end{align*}

The version of Rockafellar's theorem that we will prove next states, roughly speaking, that any norm cyclically monotonic set is contained in the norm sub-differential of some convex function. But for that sake we need to extend (in the natural way) the definition of convex functions to functions which take values on the extended real line $(-\infty,+\infty]$. A convex function $f\colon\mathbb{R}^n\rightarrow(-\infty,+\infty]$ is said to be \emph{proper} if $\{f = \infty\} \neq \mathbb{R}^n$. 

\begin{teo} A non-empty set $S\subseteq\mathbb{R}^n\times\mathbb{R}^n$ is norm cyclically monotonic if and only if there exists a proper convex function $f\colon\mathbb{R}^n\rightarrow(-\infty,+\infty]$ such that $S\subseteq \partial f$.
\end{teo}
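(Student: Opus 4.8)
The plan is to prove the two implications separately, with the forward direction being essentially free and the converse carrying all the content. For the forward direction, suppose $S\subseteq\partial f$ for a proper convex $f$. This is already established in the discussion preceding the statement: given $\{(x_0,w_0),\ldots,(x_m,w_m)\}\subseteq\partial f$, I would apply the defining inequality (\ref{normsubgradientdef}) to each consecutive pair and telescope, obtaining that the cyclic sum is bounded above by $f(x_1)-f(x_0)+\cdots+f(x_0)-f(x_m)=0$. The only adjustment needed for the extended-valued setting is the observation that membership $w_i\in\partial f(x_i)$ forces $f(x_i)<\infty$ at every point occurring in the chain, so the telescoping computation remains meaningful and unchanged.

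For the converse I would imitate Rockafellar's classical construction, replacing the inner product by the pairing $\mathcal{L}(w)\cdot v$. Fix once and for all a base point $(x_0,w_0)\in S$, which exists because $S$ is non-empty, and define $f\colon\mathbb{R}^n\rightarrow(-\infty,+\infty]$ by
\begin{align*} f(x)=\sup\Big\{\mathcal{L}(w_m)\cdot(x-x_m)+\mathcal{L}(w_{m-1})\cdot(x_m-x_{m-1})+\cdots+\mathcal{L}(w_0)\cdot(x_1-x_0)\Big\}, \end{align*}
where the supremum ranges over all $m\geq 0$ and all finite chains $(x_1,w_1),\ldots,(x_m,w_m)\in S$, the case $m=0$ contributing the single term $\mathcal{L}(w_0)\cdot(x-x_0)$.

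Next I would verify the three required properties in order. First, $f$ is convex (indeed lower semicontinuous), because each term inside the supremum is an \emph{affine} function of $x$: for fixed $w_m$ the map $x\mapsto\mathcal{L}(w_m)\cdot x$ is linear, the remaining summands being constants, so $f$ is a pointwise supremum of affine functions. This is precisely where the Legendre transform plays the role of the inner product: although $\mathcal{L}$ is not linear in its argument, the vectors $w_i$ are fixed data drawn from $S$, so each $\mathcal{L}(w_i)$ is simply a fixed linear functional and never interacts with the running variable $x$. Second, $f$ is proper: the $m=0$ term already gives $f(x)\geq\mathcal{L}(w_0)\cdot(x-x_0)>-\infty$ everywhere, while evaluating at $x=x_0$ and invoking the norm cyclic monotonicity of $S$ shows that every chain contributes a nonpositive value there, whence $f(x_0)=0<\infty$.

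Finally comes the decisive step, namely $S\subseteq\partial f$. Given $(\bar x,\bar w)\in S$ and an arbitrary $y\in\mathbb{R}^n$, I would take any chain whose value at $\bar x$ nearly realizes $f(\bar x)$ and \emph{extend} it by appending $(\bar x,\bar w)$ as a new final link, evaluated at $y$. This extended chain is admissible in the supremum defining $f(y)$, and its value is exactly $\mathcal{L}(\bar w)\cdot(y-\bar x)$ added to the value of the original chain at $\bar x$; letting the original chain approach $f(\bar x)$ then yields $f(y)\geq\mathcal{L}(\bar w)\cdot(y-\bar x)+f(\bar x)$, which is the norm sub-gradient inequality (\ref{normsubgradientdef}), so $\bar w\in\partial f(\bar x)$. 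I expect the main obstacle to be conceptual rather than technical: one must recognize that the entire classical argument survives the substitution of $\mathcal{L}(w)\cdot(\cdot)$ for $\langle w,\cdot\rangle$ precisely because the nonlinearity of the Legendre transform is confined to the fixed data $w_i$. Once this is seen, convexity, properness, and the sub-gradient inequality follow by the same bookkeeping as in the Euclidean case, with no genuinely new estimate required.
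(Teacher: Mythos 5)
Your proposal is correct and follows essentially the same route as the paper: the forward direction by telescoping the sub-gradient inequalities, and the converse via the Rockafellar-type construction $f(x)=\sup\{\mathcal{L}(w_m)\cdot(x-x_m)+\cdots+\mathcal{L}(w_0)\cdot(x_1-x_0)\}$ over chains in $S$, with convexity from the supremum of affine functions, properness from $f(x_0)=0$, and the sub-gradient inequality obtained by appending $(\bar x,\bar w)$ to a chain nearly realizing $f(\bar x)$ (the paper phrases this last step with an auxiliary $\alpha<f(x)$ rather than a limit, which is the same argument).
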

\begin{proof} We already know that if $S\subseteq\partial f$ for some proper convex function $f$, then $S$ is norm cyclically monotonic. Thus, we prove the converse. Let $S\subseteq \mathbb{R}^n\times\mathbb{R}^n$ be norm cyclically monotonic. We fix $(x_0,w_0) \in S$ and define $f\colon\mathbb{R}^n\rightarrow(-\infty,+\infty]$ by
\begin{align*} f(x) = \sup\{\mathcal{L}(w_m)\cdot(x-x_m) +\mathcal{L}(w_{m-1})\cdot(x_m-x_{m-1}) + \ldots + \mathcal{L}(w_0)\cdot(x_1-x_0)\},
\end{align*}
where the supremum is taken over all values of $m\in\mathbb{N}$ and every possible choice of points $(x_j,w_j) \in S$ for $j = 1,\ldots,m$. The function $f$ defined this way is the supremum of affine functions, and hence it is a convex function (see \cite[Chapter 1]{schneider}). Moreover, since $S$ is norm cyclically montonic, it follows that $f(x_0) = 0$, and then $f$ is proper. 

Now let $(x,w) \in S$. We have to prove that $w$ is a norm sub-gradient of $f$ at $x$. For this purpose, choose a number $\alpha < f(x)$. Hence there exist pairs $(x_1,w_1),\ldots,(x_m,w_m)$ such that
\begin{align*} \alpha < \mathcal{L}(w_m)\cdot(x-x_m) + \mathcal{L}(w_{m-1})\cdot(x_m-x_{m-1}) + \ldots + \mathcal{L}(w_0)\cdot(x_1-x_0).
\end{align*}
Putting $(x,w) = (x_{m+1},w_{m+1})$, it also comes from the definition of $f$ that
\begin{align*} f(y) \geq \mathcal{L}(w_{m+1})\cdot(y-x_{m+1}) + \mathcal{L}(w_m)\cdot(x_{m+1}-x_m) + \ldots + \mathcal{L}(w_0)\cdot(x_1-x_0),
\end{align*}
for every $y \in \mathbb{R}^n$. These two inequalities yield immediately that
\begin{align*} f(y) > \alpha + \mathcal{L}(w_{m+1})\cdot(y-x_{m+1}) = \alpha + \mathcal{L}(w)\cdot(y-x).
\end{align*}
Since this holds for any $\alpha < f(x)$ and every $y \in \mathbb{R}^n$, we have that
\begin{align*} f(y) \geq f(x) + \mathcal{L}(w)\cdot(y-x)
\end{align*} 
for each $y \in \mathbb{R}^n$. It follows that $w \in \partial f(x)$, and the proof is done. 

\end{proof}

\begin{remark} The proof of the Euclidean case is given in \cite[Theorem 1.5.16]{schneider}. 
\end{remark}

Here a disclaimer is due, namely on reasons why all of that ``works so well" when we only have a (smooth and strictly convex) norm to work with. The idea behind sub-gradients is to ``control" the one-sided derivatives using linear functionals. As it is pointed out in Remark \ref{ident}, in the classical theory these functionals are simply identified with vectors via the standard inner product of $\mathbb{R}^n$. When we have a norm, we still can identify vectors with linear functionals in a way which is coherent with the norm of the domain by using the Legendre transform. 

Things get more interesting when we adopt the geometric point of view. In the classical theory, sub-differentials are related to normal cones of sub-level sets. The natural question that arises is whether this is also true in our case, when one replaces the standard normality notion (combined with the  
standard inner product) by Birkhoff orthogonality. The next theorems are devoted to answer this question. 

\begin{teo}\label{normsubgradgeom} Let $x \in \mathbb{R}^n$ and $c = f(x)$. Assume that $c$ is not a global minimum of $f$, and let $v \in\mathbb{R}^n$ be a norm sub-gradient of $f$ at $x$. Then the hyperplane $\mathbf{h}$ which is Birkhoff right-orthogonal to $v$ supports $\{f\leq c\}$ at $x$. Moreover, we have the estimates
\begin{align}\label{estimatesub}  \sup\{f'_-(x,v+z): z\in\mathbf{h}\} \leq ||v||^2 \leq \inf\{f'_+(x,v+z): z\in\mathbf{h}\}.
\end{align}
\end{teo}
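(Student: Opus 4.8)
The plan is to deduce both assertions from Lemma \ref{maxlemma}, which reformulates membership in $\partial f(x)$ as the inequality $f'_+(x,u) \geq \mathcal{L}(v)\cdot u$ valid for every $u \in \mathbb{R}^n$, together with the two defining properties of the Legendre transform. First I would record that, because $c = f(x)$ is not a global minimum, we have $v \neq 0$ (recall that $0 \in \partial f(x)$ exactly when $c$ is a global minimum); hence the hyperplane $\mathbf{h}$ which is Birkhoff right-orthogonal to $v$ is well defined, and by property (a) of the Legendre transform it coincides with $\mathrm{ker}\,\mathcal{L}(v)$. For the support claim I would reproduce the argument from the converse part of the proof of Theorem \ref{legendregrad}: if $\mathbf{h}$ (translated to pass through $x$) did not support $\{f \leq c\}$, it would meet $\mathrm{int}\{f\leq c\} = \{f < c\}$ (the equality of these sets being guaranteed by Proposition \ref{convexsublevel}, since $c$ is not a minimum); but any such point $y$ satisfies $\mathcal{L}(v)\cdot(y-x) = 0$ while $f(y) - f(x) < 0$, contradicting the sub-gradient inequality (\ref{normsubgradientdef}).

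The estimates then rest on a single computation: for every $z \in \mathbf{h}$ we have $\mathcal{L}(v)\cdot(v+z) = \mathcal{L}(v)\cdot v + \mathcal{L}(v)\cdot z = ||v||^2$, because $\mathcal{L}(v)\cdot v = ||v||^2$ by property (b) of the Legendre transform and $\mathcal{L}(v)\cdot z = 0$ since $z \in \mathrm{ker}\,\mathcal{L}(v) = \mathbf{h}$. For the right-hand inequality I would apply Lemma \ref{maxlemma} with $u = v+z$, obtaining $f'_+(x,v+z) \geq \mathcal{L}(v)\cdot(v+z) = ||v||^2$; taking the infimum over $z \in \mathbf{h}$ gives $||v||^2 \leq \inf\{f'_+(x,v+z): z \in \mathbf{h}\}$.

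For the left-hand inequality I would exploit the identity $f'_-(x,u) = -f'_+(x,-u)$ recorded in Section \ref{convex}. Writing $f'_-(x,v+z) = -f'_+(x,-v-z)$ and applying Lemma \ref{maxlemma} with $u = -v-z$, I get $f'_+(x,-v-z) \geq \mathcal{L}(v)\cdot(-v-z) = -||v||^2$, so that $f'_-(x,v+z) \leq ||v||^2$; the supremum over $z \in \mathbf{h}$ then yields the claimed bound. I do not foresee a serious obstacle once Lemma \ref{maxlemma} is available: the whole statement reduces to careful bookkeeping of the two defining values of $\mathcal{L}(v)$ (namely $\mathcal{L}(v)\cdot v = ||v||^2$ and its vanishing on $\mathbf{h}$) and of the sign flip relating $f'_-$ to $f'_+$. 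The one point requiring genuine attention is the handedness convention, i.e.\ checking that $\mathrm{ker}\,\mathcal{L}(v)$ is indeed the hyperplane \emph{right}-orthogonal to $v$, which is what simultaneously legitimizes the support statement and the cancellation $\mathcal{L}(v)\cdot z = 0$.
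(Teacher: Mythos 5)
Your proposal is correct and takes essentially the same route as the paper's own proof: the support claim by contradiction via a point $y \in \mathbf{h}\cap\{f<c\}$ (where $\mathcal{L}(v)\cdot(y-x)=0$ clashes with $f(y)-f(x)<0$), and both estimates by applying Lemma \ref{maxlemma} to $u = v+z$ and $u = -v-z$ together with $\mathcal{L}(v)\cdot v = ||v||^2$ and the vanishing of $\mathcal{L}(v)$ on $\mathbf{h}$. Your added observations (that $v \neq 0$, and the appeal to Proposition \ref{convexsublevel} for $\mathrm{int}\{f\leq c\} = \{f<c\}$) are points the paper leaves implicit, not a genuine departure.
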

\begin{remark}\label{nonzerov} If $v \neq 0$ and \textbf{(i)} holds, then one can easily see that $f'_-(x,v)$ and $f'_+(x,v)$ are positive numbers.
\end{remark}
\begin{proof} By abuse of notation, we assume that $\mathbf{h}$ is translated to pass through $x$. If $\mathbf{h}$ does not support $\{f\leq c\}$ at $x$, then one may take a point $y \in \mathbf{h}\cap\{f < c\}$. Since $y - x \in \mathbf{h}$ and $v \dashv_B \mathbf{h}$, we have that $\mathcal{L}(v)\cdot(y-x) = 0$. Once $f(y) < c = f(x)$, we get $f(y) - f(x) < 0 = \mathcal{L}(v)\cdot(y-x)$, which contradicts the fact that $v$ is a sub-gradient of $f$ at $x$. This proves that $\mathbf{h}$ supports $\{f\leq c\}$ at $x$.

The estimates come from Lemma \ref{maxlemma}. For any $z \in \mathbf{h}$ we have that
\begin{align*} f'_+(x,v+z) \geq \mathcal{L}(v)\cdot(v+z) = \mathcal{L}(v)\cdot v = ||v||^2 \ \ \mathrm{and}\\
f'_-(x,v+z) = -f'_+(x,-v-z) \leq -\mathcal{L}(v)\cdot(-v-z) = ||v||^2,
\end{align*}
and this concludes the proof.

\end{proof}

The natural question that arises here is whether the converse holds true, namely the following implication: if a vector $v$ is Birkhoff left-orthogonal to some hyperplane which supports $\{f \leq c\}$ at $x$, and if (\ref{estimatesub}) also holds, is $v$ then a norm sub-gradient of $f$? To answer that question (positively), we first need to check that the estimates (\ref{estimatesub}) really ``make sense".

\begin{prop}\label{regular} Let $f\colon\mathbb{R}^n\rightarrow\mathbb{R}$ be a convex function, and assume that $f(x) = c$ is not a global minimum. If $\mathbf{h}$ is a supporting hyperplane of $\{f\leq c\}$ at $x$, and if $v \dashv_B \mathbf{h}$, then
\begin{align*} f'_-(x,v+z_1) \leq f'_+(x,v+z_2)
\end{align*}
for any $z_1,z_2 \in \mathbf{h}$. As a consequence, we have that
\begin{align*} \sup\{f'_-(x,v+z): z\in\mathbf{h}\} \leq \inf\{f'_+(x,v+z): z\in\mathbf{h}\}.
\end{align*}
\end{prop}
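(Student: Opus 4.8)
The plan is to reduce the claimed inequality to two ingredients already available to us: the subadditivity of the sublinear one-sided derivative $f'_+(x,\cdot)$, and a sign condition $f'_+(x,z)\geq 0$ for every direction $z\in\mathbf{h}$ parallel to the supporting hyperplane. Throughout I use the identity $f'_-(x,w)=-f'_+(x,-w)$ recorded in Section \ref{convex}.

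First I would establish the sign condition. Since $c=f(x)$ is not a global minimum, Proposition \ref{convexsublevel} gives that $\{f<c\}$ is a nonempty open set equal to $\mathrm{int}\{f\leq c\}$, and that $x\in\partial\{f\leq c\}$. As $\mathbf{h}$ supports $\{f\leq c\}$ at $x$, we may assume $\mathbf{h}$ is translated to pass through $x$, and $\{f\leq c\}$ lies in one of the closed half-spaces, say $\mathbf{h}^-$. For any $z\in\mathbf{h}$ (viewed as the subspace through the origin) and any $t>0$, the point $x+tz$ lies on the hyperplane $\mathbf{h}$, hence cannot be an interior point of $\{f\leq c\}$; therefore $f(x+tz)\geq c=f(x)$. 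Dividing by $t$ and letting $t\to 0^+$ yields
\begin{align*} f'_+(x,z)\geq 0\quad\text{for every }z\in\mathbf{h}. \end{align*}

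With this in hand, the main inequality is immediate. Fix $z_1,z_2\in\mathbf{h}$ and note that $z_2-z_1\in\mathbf{h}$. Using the identity for $f'_-$ and then subadditivity of $f'_+(x,\cdot)$, I would compute
\begin{align*} f'_+(x,v+z_2)-f'_-(x,v+z_1) &= f'_+(x,v+z_2)+f'_+(x,-v-z_1) \\ &\geq f'_+\big(x,(v+z_2)+(-v-z_1)\big) = f'_+(x,z_2-z_1)\geq 0, \end{align*}
where the last inequality is the sign condition just proved. This is exactly $f'_-(x,v+z_1)\leq f'_+(x,v+z_2)$. The ``consequence'' then follows by fixing $z_2$ and taking the supremum over $z_1$, and afterwards taking the infimum over $z_2$.

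The only real subtlety is the sign condition, and this is precisely where the hypothesis that $c$ is not a global minimum is used: it is what lets Proposition \ref{convexsublevel} identify $\{f<c\}$ with the interior of the sub-level set, so that a point of $\mathbf{h}$ cannot satisfy $f<c$. It is worth remarking that the Birkhoff orthogonality hypothesis $v\dashv_B\mathbf{h}$ plays no role in the argument: the vector $v$ cancels in the subadditivity step, so the estimate holds for an arbitrary fixed $v\in\mathbb{R}^n$ together with any supporting hyperplane $\mathbf{h}$ of $\{f\leq c\}$ at $x$.
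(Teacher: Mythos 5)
Your proof is correct and is essentially the paper's own argument: the paper performs the identical computation---subadditivity of $f'_+(x,\cdot)$ applied to the decomposition $(v+z_2)+(-v-z_1)=z_2-z_1$, combined with the sign condition $f'_+(x,z)\geq 0$ for $z\in\mathbf{h}$---only phrased as a proof by contradiction rather than directly. Your extra care in justifying the sign condition via Proposition \ref{convexsublevel}, and your observation that the hypothesis $v\dashv_B\mathbf{h}$ is never actually used, are both sound but do not change the substance of the argument.
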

\begin{proof} Suppose that there exist $z_1,z_2 \in \mathbf{h}$ such that
\begin{align*} f'_-(x,v+z_1) > f'_+(x,v+z_2).
\end{align*}
This yields
\begin{align*} f'_+(x,z_2-z_1) \leq f'_+(x,v+z_2) + f'_+(x,-v-z_1) = f'_+(x,v+z_2) - f'_-(x,v+z_1) < 0,
\end{align*}
and this is a contradiction because $z_2 - z_1 \in \mathbf{h}$. Indeed, since $\mathbf{h}$ supports $\{f\leq c\}$ at $x$, we have that, at $x$, $f$ is non-decreasing in the direction of $z_2-z_1$. 

\end{proof}

It follows from Proposition \ref{regular} that if $\mathbf{h}$ supports $\{f\leq c\}$ at $x$, then we always can choose a vector $v\dashv_B \mathbf{h}$ for which (\ref{estimatesub}) holds. Next we will prove that such a vector is a norm sub-gradient. This guarantees the existence of norm sub-gradients pointing in all Birkhoff outer normal directions of $\{f\leq c\}$ at $x$.

\begin{teo}\label{subgradientchar} Let $f\colon\mathbb{R}^n\rightarrow\mathbb{R}$ be a convex function. As usual, assume that $f(x) = c$ is not a global minimum, and that the hyperplane $\mathbf{h}$ supports $\{f\leq c\}$ at $x$. If $v \in\mathbb{R}^n$ is a vector such that\\

\noindent\textbf{\emph{(i)}} $v \dashv_B \mathbf{h}$ and\\

\noindent\textbf{\emph{(ii)}} $\sup\{f'_-(x,v+z): z\in\mathbf{h}\} \leq ||v||^2 \leq \inf\{f'_+(x,v+z): z\in\mathbf{h}\}$,\\

\noindent then $v$ is a norm sub-gradient of $f$ at $x$. 
\end{teo}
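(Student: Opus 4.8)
The plan is to reduce everything to the sub-gradient criterion of Lemma \ref{maxlemma}: it suffices to prove that
\begin{align*} f'_+(x,u) \geq \mathcal{L}(v)\cdot u \end{align*}
for every $u \in \mathbb{R}^n$. First I would observe that condition \textbf{(i)} forces $v \neq 0$, since Birkhoff orthogonality is a relation between directions; hence $\mathcal{L}(v)\cdot v = ||v||^2 \neq 0$, so $v \notin \mathbf{h}$ and we obtain the direct-sum decomposition $\mathbb{R}^n = \mathbf{h}\oplus\mathrm{span}\{v\}$. Writing an arbitrary $u$ as $u = \lambda v + z$ with $\lambda \in \mathbb{R}$ and $z \in \mathbf{h}$, the defining properties of the Legendre transform ($\mathcal{L}(v)\cdot v = ||v||^2$ and $\mathcal{L}(v)\cdot z = 0$ for $z \in \mathbf{h}$) give $\mathcal{L}(v)\cdot u = \lambda||v||^2$, so the target inequality becomes $f'_+(x,\lambda v + z) \geq \lambda||v||^2$.

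I would then split into three cases according to the sign of $\lambda$. For $\lambda > 0$, factoring out $\lambda$ by positive homogeneity of $f'_+(x,\cdot)$ and using that $z/\lambda \in \mathbf{h}$ reduces the claim to the right-hand estimate in \textbf{(ii)}. For $\lambda = 0$, the inequality reads $f'_+(x,z) \geq 0$ for $z \in \mathbf{h}$; this follows from the hypothesis that $\mathbf{h}$ supports $\{f\leq c\}$ at $x$, because then $f(x+tz)\geq c = f(x)$ for all $t$ (points of the supporting hyperplane cannot lie in the interior $\{f<c\}$ of the sub-level set, by the remark after Proposition \ref{convexsublevel}), whence the one-sided derivative is non-negative. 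For $\lambda < 0$, I would rewrite $\lambda v + z = |\lambda|\bigl(-(v + z'')\bigr)$ with $z'' = -z/|\lambda| \in \mathbf{h}$, apply positive homogeneity together with the identity $f'_+(x,-w) = -f'_-(x,w)$ to obtain $f'_+(x,\lambda v + z) = -|\lambda|\,f'_-(x,v+z'')$, and then invoke the left-hand estimate $f'_-(x,v+z'')\leq||v||^2$ from \textbf{(ii)} to conclude $f'_+(x,\lambda v+z)\geq -|\lambda|\,||v||^2 = \lambda||v||^2$.

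Assembling the three cases gives the required inequality for all $u$, and Lemma \ref{maxlemma} then yields $v \in \partial f(x)$. The only genuinely delicate points are bookkeeping ones: one must respect the \emph{positive} homogeneity of $f'_+(x,\cdot)$, pulling out only non-negative scalars, which is precisely why the case $\lambda<0$ has to be routed through $f'_-$ via $f'_-(x,w)=-f'_+(x,-w)$ and the lower estimate in \textbf{(ii)}; and one must notice that the case $\lambda=0$ is \emph{not} covered by \textbf{(ii)} at all, but rather by the supporting hypothesis on $\mathbf{h}$. I expect no serious obstacle beyond these sign conventions, since the substance is entirely contained in Lemma \ref{maxlemma} and the Legendre-transform identities, with Proposition \ref{regular} guaranteeing in the background that the two-sided estimate in \textbf{(ii)} is consistent.
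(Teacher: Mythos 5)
Your proposal is correct and follows essentially the same route as the paper: the same decomposition $u=\lambda v+z$ relative to $\mathbf{h}\oplus\mathrm{span}\{v\}$, the same three-case analysis on the sign of $\lambda$ (right-hand estimate of \textbf{(ii)} for $\lambda>0$, left-hand estimate via $f'_-$ for $\lambda<0$, and the supporting-hyperplane property for $\lambda=0$), and the same Legendre-transform identities. The only cosmetic difference is that you invoke Lemma \ref{maxlemma} as a black box, whereas the paper works directly with inequality (\ref{ineqconvex1}) applied to $f(y)-f(x)$ --- which is exactly how that direction of Lemma \ref{maxlemma} is proved, so the two arguments coincide.
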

\begin{proof} First observe that, by Remark \ref{nonzerov}, $v$ is non-zero. We have to prove that $f(y) - f(x) \geq \mathcal{L}(v)\cdot(y-x)$. Any $y \in \mathbb{R}^n$ can be written as $y = x + z + \lambda v$ for some $\lambda \in \mathbb{R}$ and some $z \in \mathbf{h}$. Assuming first that $\lambda > 0$, we have
\begin{align*} f(y) - f(x) = f(x+z+\lambda v) - f(x) \geq f'_+(x,z+\lambda v) = \lambda f'_+\Big(x,v+\frac{z}{\lambda}\Big) \geq \lambda||v||^2 =\\ = \mathcal{L}(v)\cdot(\lambda v) = \mathcal{L}(v)\cdot(z+\lambda v) = \mathcal{L}(v)\cdot(y-x),
\end{align*}
where the first inequality comes from (\ref{ineqconvex1}). We also used the definition of the Legendre transform. If $\lambda < 0$, we have
\begin{align*} f(y) - f(x) \geq f'_+(x,z+\lambda v) = -\lambda f'_+\Big(x,-v-\frac{z}{\lambda}\Big) = \lambda f'_-\Big(x,v+\frac{z}{\lambda}\Big) \geq \lambda ||v||^2 = \\ = \mathcal{L}(v)\cdot(z+\lambda v) = \mathcal{L}(v)\cdot(y-x).
\end{align*}
It only remains to prove the case $\lambda = 0$. To do so, we recall that $f'_+(x,z) \geq 0$, because $z$ is a vector of a supporting hyperplane of $\{f\leq c\}$ at $x$. Hence
\begin{align*} f(y) - f(x) = f(x+z) - f(x) \geq f'_+(x,z) \geq 0 = \mathcal{L}(v)\cdot z = \mathcal{L}(v)\cdot(y-x),
\end{align*}
and the proof is complete. 

\end{proof}

\begin{coro} Under the same conditions as in the previous theorem, let $u$ be a unit outward pointing vector Birkhoff orthogonal to a supporting hyperplane $\mathbf{h}$ of $\{f\leq c\}$ at $x$. If
\begin{align*} \sup\{f'_-(x,u+z):z\in\mathbf{h}\} \leq \lambda \leq \inf\{f'_+(x,u+z):z\in\mathbf{h}\},
\end{align*}
then $\lambda u \in \partial f(x)$. 
\end{coro}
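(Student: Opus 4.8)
The plan is to recognize this corollary as a direct rescaling of Theorem \ref{subgradientchar}: the natural candidate for the norm sub-gradient is $v := \lambda u$, and I would verify that this $v$ satisfies hypotheses \textbf{(i)} and \textbf{(ii)} of that theorem. The only genuinely new ingredient is matching the \emph{linear} bounds on $\lambda$ given here against the \emph{quadratic} bound $\|v\|^2$ appearing in Theorem \ref{subgradientchar}, and this is where the correct scaling has to be inserted.

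First I would establish that $\lambda > 0$. Since $u$ is a nonzero (unit) vector with $u \dashv_B \mathbf{h}$ and $c$ is not a global minimum, Remark \ref{nonzerov} gives $f'_-(x,u) > 0$. Taking $z = 0$ in the supremum on the left of the hypothesis yields $\lambda \geq f'_-(x,u) > 0$, so $\lambda$ is strictly positive and $v = \lambda u \neq 0$. Condition \textbf{(i)} of Theorem \ref{subgradientchar} is then immediate, because Birkhoff left-orthogonality is homogeneous in its first argument: from $u \dashv_B \mathbf{h}$ and $\lambda > 0$ we get $\lambda u \dashv_B \mathbf{h}$.

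For condition \textbf{(ii)} I would exploit the positive homogeneity of the one-sided derivatives. Since $\mathbf{h}$ is a subspace, as $z$ ranges over $\mathbf{h}$ the vector $z' := z/\lambda$ also ranges over $\mathbf{h}$, so every $v + z = \lambda u + z$ can be written as $\lambda(u + z')$. Because $\lambda > 0$, positive homogeneity gives $f'_+(x,\lambda(u+z')) = \lambda\, f'_+(x,u+z')$, and likewise, using $f'_-(x,w) = -f'_+(x,-w)$, one gets $f'_-(x,\lambda(u+z')) = \lambda\, f'_-(x,u+z')$. Therefore
\begin{align*}
\sup\{f'_-(x,\lambda u+z):z\in\mathbf{h}\} = \lambda\sup\{f'_-(x,u+z'):z'\in\mathbf{h}\} \leq \lambda\cdot\lambda = \lambda^2 = \|\lambda u\|^2,
\end{align*}
where the inequality is exactly the left half of the hypothesis; symmetrically, the right half gives $\|\lambda u\|^2 = \lambda^2 \leq \lambda\inf\{f'_+(x,u+z'):z'\in\mathbf{h}\} = \inf\{f'_+(x,\lambda u+z):z\in\mathbf{h}\}$. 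This is precisely condition \textbf{(ii)} for $v = \lambda u$.

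With both hypotheses verified, Theorem \ref{subgradientchar} concludes that $v = \lambda u$ is a norm sub-gradient of $f$ at $x$, i.e.\ $\lambda u \in \partial f(x)$. I do not expect any serious obstacle here; the one point requiring care is the scaling $z = \lambda z'$ that turns the unit-normal bounds on $\lambda$ into the $\|\cdot\|^2$-bounds demanded by Theorem \ref{subgradientchar}, together with the appeal to Remark \ref{nonzerov} to guarantee $\lambda > 0$ so that positive homogeneity (rather than the reversed inequalities one would get for negative factors) applies.
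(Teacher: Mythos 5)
Your proof is correct and follows essentially the same route as the paper's: verify conditions \textbf{(i)} and \textbf{(ii)} of Theorem \ref{subgradientchar} for $v=\lambda u$, using homogeneity of Birkhoff orthogonality, Remark \ref{nonzerov} (via $z=0$) to get $\lambda>0$, and positive homogeneity of the one-sided derivatives together with the substitution $z=\lambda z'$ to convert the linear bounds on $\lambda$ into the $\|\lambda u\|^2$ bounds. In fact your write-up spells out the scaling step and the positivity of $\lambda$ in more detail than the paper does.
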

\begin{proof} We have to prove that conditions \textbf{(i)} and \textbf{(ii)} of Theorem \ref{subgradientchar} hold. The first is obvious, because Birkhoff orthogonality is homogeneous. For the second, recall that $\lambda > 0$ (see Remark \ref{nonzerov}), notice that 
\begin{align*} \lambda\cdot\sup\{f'_-(x,u+z):z\in\mathbf{h}\} = \sup\{f'_-(x,\lambda u+z):z\in\mathbf{h}\},
\end{align*}
and that the same holds for the infimum. Then
\begin{align*} \sup\{f'_-(x,\lambda v+z):z\in\mathbf{h}\} \leq \lambda^2 = ||\lambda u||^2 \leq \inf\{f'_+(x,\lambda u+z):z\in\mathbf{h}\},
\end{align*}
which is condition \textbf{(ii)} of Theorem \ref{subgradientchar}. 

\end{proof}

As we shall see later, Theorem \ref{normsubgradgeom} is a key ingredient in proving that a distance function to a convex body in a (smooth) normed space is differentiable outside the body. But this can also be used to give an easy proof to a well-known result regarding the regularity of norms.

\begin{teo} Let $\rho\colon\mathbb{R}^n\rightarrow\mathbb{R}$ be a norm. If its unit ball $B$ is smooth, then $\rho$ is $C^1$ on $\mathbb{R}^n\setminus\{0\}$. Moreover, its norm gradient is given by
\begin{align*} \nabla\rho(x) = \frac{x}{\rho(x)},
\end{align*}
for each $x \in \mathbb{R}^n\setminus\{0\}$. 
\end{teo}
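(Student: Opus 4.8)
The plan is to fix $x\in\mathbb{R}^n\setminus\{0\}$ and work with the sub-level set $\{\rho\le c\}$, where $c:=\rho(x)>0$. Since $\rho$ is positively homogeneous of degree one, this sub-level set is precisely the scaled ball $cB$, and $c$ is not the global minimum of $\rho$ (which equals $0$ and is attained only at the origin), so we are in case \textbf{(i)} of Proposition \ref{convexsublevel}. By smoothness of $B$, the body $cB$ has a \emph{unique} supporting hyperplane $\mathbf{h}$ at $x$, and this hyperplane is Birkhoff right-orthogonal to $x$; this is the single geometric input on which the whole argument rests.

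First I would produce the candidate gradient explicitly and verify that it is a norm sub-gradient. Set $v_0:=x/\rho(x)$. Using homogeneity of the Legendre transform, $\mathcal{L}(v_0)=\mathcal{L}(x)/\rho(x)$, while the norm-preserving property gives $\|\mathcal{L}(x)\|_*=\rho(x)$. Hence for every $y$ one has $\mathcal{L}(x)\cdot y\le\|\mathcal{L}(x)\|_*\,\rho(y)=\rho(x)\rho(y)$, which, after dividing by $\rho(x)$ and using $\mathcal{L}(x)\cdot x=\rho(x)^2$, rearranges to $\rho(y)-\rho(x)\ge\mathcal{L}(v_0)\cdot(y-x)$. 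By the defining inequality (\ref{normsubgradientdef}) this is exactly the statement $v_0\in\partial\rho(x)$.

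The key step is to show that $v_0$ is the \emph{only} sub-gradient, equivalently that $\rho'_+(x,\cdot)$ is linear. Let $w\in\partial\rho(x)$ be arbitrary. By Theorem \ref{normsubgradgeom} the hyperplane Birkhoff right-orthogonal to $w$ supports $\{\rho\le c\}=cB$ at $x$, and smoothness forces it to coincide with $\mathbf{h}$. Thus $\mathcal{L}(w)$ and $\mathcal{L}(x)$ share the kernel $\mathbf{h}$, so $\mathcal{L}(w)=t\,\mathcal{L}(x)$ for some scalar $t$; evaluating the sub-gradient inequality at $y=0$ and $y=2x$ pins down $\mathcal{L}(w)\cdot x=\rho(x)$, whence $t=1/\rho(x)$ and $\mathcal{L}(w)=\mathcal{L}(v_0)$. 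Consequently every sub-gradient yields the same functional, and Theorem \ref{maxtheorem} gives $\rho'_+(x,u)=\max\{\mathcal{L}(w)\cdot u:w\in\partial\rho(x)\}=\mathcal{L}(v_0)\cdot u$, a linear map. By Proposition \ref{diffconvexchar} this means $\rho$ is differentiable at $x$, and Corollary \ref{singletondiff} identifies the unique sub-gradient with the norm gradient, so $\nabla\rho(x)=v_0=x/\rho(x)$.

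Finally, to upgrade to $C^1$ I would note that $x\mapsto x/\rho(x)$ is continuous on $\mathbb{R}^n\setminus\{0\}$ (as $\rho$ is continuous and strictly positive there) and invoke Remark \ref{continuousequiv}, by which continuity of the norm gradient is equivalent to $\rho$ being of class $C^1$. As a consistency check, the formula $\nabla\rho(x)=x/\rho(x)$ matches the definition of the norm gradient directly: condition \textbf{(a)} holds because $x\dashv_B\mathbf{h}$ and Birkhoff orthogonality is homogeneous, while condition \textbf{(b)} reduces to Euler's identity $d\rho_x(x)=\rho(x)$ (obtained by differentiating $\rho(tx)=t\rho(x)$ in $t$), which fixes the normalizing scalar. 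I expect the main obstacle to be the uniqueness step: everything hinges on turning smoothness of $B$ into the assertion that all sub-gradients at $x$ carry the \emph{same} Birkhoff right-orthogonal hyperplane, and hence the same Legendre functional; once that is secured, linearity of $\rho'_+(x,\cdot)$ and the explicit gradient follow mechanically.
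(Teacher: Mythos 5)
Your proof is correct, and its skeleton matches the paper's: identify $\{\rho\le c\}$ with $cB$, use smoothness to get a unique supporting hyperplane $\mathbf{h}$ at $x$, constrain every element of $\partial\rho(x)$ via Theorem \ref{normsubgradgeom}, conclude that the norm sub-differential is a singleton, and finish with Corollary \ref{singletondiff} and Remark \ref{continuousequiv}. However, you execute the two middle steps differently, and the differences are worth recording. For existence, you exhibit the candidate $v_0=x/\rho(x)$ and verify the sub-gradient inequality directly from the dual-norm estimate $\mathcal{L}(x)\cdot y\le\|\mathcal{L}(x)\|_*\,\rho(y)=\rho(x)\rho(y)$ together with homogeneity and norm-preservation of $\mathcal{L}$; the paper never exhibits a sub-gradient explicitly and instead gets $\partial\rho(x)\neq\emptyset$ abstractly from Theorem \ref{maxtheorem}. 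For uniqueness, the paper argues on the primal side: $v\dashv_B\mathbf{h}$ forces $v=\beta x$ (this step uses strict convexity, i.e.\ uniqueness of the Birkhoff left-orthogonal direction, which is a standing assumption of the section though absent from the theorem's hypotheses), and then $\beta=1/\rho(x)$ is pinned via Lemma \ref{maxlemma} and the derivatives $\rho'_+(x,\pm x)=\pm\rho(x)$. You argue on the dual side: $\mathcal{L}(w)$ and $\mathcal{L}(x)$ share the kernel $\mathbf{h}$, hence are proportional, and the constant is pinned by evaluating the sub-gradient inequality at $y=0$ and $y=2x$ --- no derivatives, and no appeal to strict convexity at that point; moreover, your route to differentiability (all sub-gradients have the same Legendre image, hence $\rho'_+(x,\cdot)$ is linear by Theorem \ref{maxtheorem} and Proposition \ref{diffconvexchar}) avoids invoking injectivity of $\mathcal{L}$, which would actually fail for smooth but non-strictly-convex norms. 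What each buys: the paper's primal argument is shorter given the machinery already in place; yours is more self-contained and separates more cleanly the roles of smoothness and strict convexity, which is pleasant since the statement hypothesizes only smoothness --- though both proofs ultimately live inside the framework of Section \ref{gradsubgrad}, where strict convexity remains a standing assumption (e.g.\ for the bijectivity of $\mathcal{L}$ used in Theorem \ref{maxtheorem} and in Corollary \ref{singletondiff}). One cosmetic remark: when you speak of ``the hyperplane Birkhoff right-orthogonal to $w$'' you implicitly assume $w\neq 0$; this is harmless, since $0\in\partial\rho(x)$ would force $c$ to be the global minimum, contradicting $c>0$, and in any case your evaluation at $y=0$ already rules out $w=0$.
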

\begin{proof} First notice that $\rho$ is a sub-linear function, and hence it is convex. Thus, to show that $\rho$ is differentiable at a given point $x$, it suffices to prove that the norm sub-differential of $\rho$ at $x$ contains a unique element. Let $x \in \mathbb{R}^n\setminus\{0\}$ be such that $\rho(x) = c$ ($> 0$). The sub-level set $\{\rho \leq c\}$ is the ball $cB$, which by the smoothness hypothesis is supported at $x$ by a unique hyperplane ($\mathbf{h}$, say). Thus, given $v \in \partial\rho(x)$ we have $v \dashv_B \mathbf{h}$, meaning that $v = \beta x$ for some $\beta \in \mathbb{R}$. Since
\begin{align*} \left.\frac{d}{dt}\rho(x\pm tx)\right|_{t=0} = \pm\rho(x),
\end{align*}
we have that $\rho'_+(x,x) = \rho(x) = -\rho'_+(x,-x)$. With Lemma \ref{maxlemma} this yields
\begin{align*} \rho(x) = \rho'_+(x,x) \geq \mathcal{L}(v)\cdot x = \mathcal{L}(\beta x)\cdot x = \beta\rho(x)^2
\end{align*}
and
\begin{align*} -\rho(x) = \rho'_+(x,-x) \geq \mathcal{L}(v)\cdot(-x) = -\mathcal{L}(\beta x)\cdot x = -\beta\rho(x)^2.
\end{align*}
These inequalities give that $\beta = 1/\rho(x)$. It follows that $\partial\rho(x)$ is a singleton, and hence $\rho$ is differentiable. The unique element of $\partial\rho(x)$ is the norm gradient
\begin{align*} \nabla\rho(x) = \beta x = \frac{x}{\rho(x)},
\end{align*}
which is clearly continuous. It follows that $\rho$ is $C^1$ on $\mathbb{R}^n\setminus\{0\}$. 

\end{proof}

\begin{remark} Notice that by the continuity of the Legendre transform we have that if the norm gradient is continuous (as a map onto $\mathbb{R}^n$), then the Euclidean gradient is also continuous. Indeed, the Euclidean gradient is the image of the norm gradient under the composition of the Legendre transform with the isomorphism between $\mathbb{R}^n$ and $(\mathbb{R}^n)^*$ given by the standard inner product. 
\end{remark}

In some fields (such as Finsler geometry, for example) it is more common to define the Legendre transform by means of the derivative of the norm. However, in the beginning we did not assume  
differentiability of the norm, but only strict convexity and smoothness. The last theorem states that, under these hypotheses, the norm is indeed differentiable, and hence we can characterize now the Legendre transform by means of the derivative of the norm. 

\begin{coro} Let $\rho$ be a smooth norm on $\mathbb{R}^n$. The associated Legendre transform can be written as
\begin{align*} \mathcal{L}(x) = \rho(x)\cdot d\rho_x(\cdot),
\end{align*}
for each $x \in \mathbb{R}^n$.
\end{coro}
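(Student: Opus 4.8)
The plan is to obtain the identity as an immediate consequence of the preceding theorem together with Corollary \ref{singletondiff}. The preceding theorem supplies two facts: that a smooth norm $\rho$ is differentiable on $\mathbb{R}^n\setminus\{0\}$, and that its norm gradient there is $\nabla\rho(x)=x/\rho(x)$. Corollary \ref{singletondiff}, in turn, expresses the Fr\'echet derivative of any differentiable convex function through the Legendre transform of its norm gradient. Since $\rho$ is sub-linear and hence convex, and is differentiable away from the origin, both tools apply directly, and the proof reduces to a substitution followed by an appeal to the homogeneity of $\mathcal{L}$.

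First I would fix $x\neq 0$ and recall from the preceding theorem that $\nabla\rho(x)=x/\rho(x)$. Applying Corollary \ref{singletondiff} to the convex differentiable function $\rho$ yields
\begin{align*}
d\rho_x(u) = \mathcal{L}(\nabla\rho(x))\cdot u = \mathcal{L}\!\left(\frac{x}{\rho(x)}\right)\cdot u
\end{align*}
for every $u\in\mathbb{R}^n$. Next I would invoke the degree-one homogeneity of the Legendre transform (established in the homogeneity lemma) to rewrite $\mathcal{L}(x/\rho(x)) = \mathcal{L}(x)/\rho(x)$, so that $d\rho_x(u) = \tfrac{1}{\rho(x)}\,\mathcal{L}(x)\cdot u$. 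Multiplying through by $\rho(x)$ gives $\mathcal{L}(x)\cdot u = \rho(x)\,d\rho_x(u)$ for all $u$, which is exactly the claimed identity $\mathcal{L}(x)=\rho(x)\cdot d\rho_x(\cdot)$.

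I do not expect any serious obstacle here, since the argument is essentially a direct substitution into two previously proved results. The only point requiring a word of care is the behavior at the origin: the right-hand side $\rho(x)\cdot d\rho_x(\cdot)$ is genuinely meaningful only for $x\neq 0$, where the preceding theorem guarantees differentiability, while the case $x=0$ is handled trivially by the convention $\mathcal{L}(0)=0$ from the definition of the Legendre transform.
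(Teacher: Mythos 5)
Your proof is correct, but it takes a genuinely different route from the paper's. You compose three previously established results: the gradient formula $\nabla\rho(x)=x/\rho(x)$ from the preceding theorem, the identity $df_x(\cdot)=\mathcal{L}(\nabla f(x))\cdot(\cdot)$ from Corollary \ref{singletondiff}, and the degree-one homogeneity of $\mathcal{L}$; the identity then falls out by substitution. The paper instead argues directly: it regards $\mathcal{L}(x)$ and $\rho(x)\cdot d\rho_x(\cdot)$ as two linear functionals and checks that they agree on the decomposition $\mathbb{R}^n=\mathbf{h}\oplus\mathrm{span}\{x\}$, where $\mathbf{h}$ is the hyperplane with $x\dashv_B\mathbf{h}$ --- on $\mathbf{h}$ both vanish (for $d\rho_x$ this is seen by differentiating $\rho$ along a curve lying in the level set $c\cdot\partial B$ through $x$), and at $x$ both evaluate to $\rho(x)^2$. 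From the preceding theorem the paper only borrows the differentiability of $\rho$, not the gradient formula. Your argument buys brevity and makes transparent that the corollary is just the gradient formula read through Corollary \ref{singletondiff}; the paper's buys a self-contained geometric verification that matches term by term the two defining properties of $\mathcal{L}(x)$ (vanishing on the Birkhoff right-orthogonal hyperplane, and taking the value $||x||^2$ at $x$). Your handling of $x=0$ agrees with the paper's ("nothing to prove"), and your implicit reliance on the section's standing assumption that the norm is also strictly convex (needed for Corollary \ref{singletondiff} and for the gradient formula itself) is the same reliance the paper makes.
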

\begin{proof} If $x = 0$, then there is nothing to prove. Hence we assume that $x \neq 0$ and put $c = \rho(x)$. With $B$ as unit ball of $\rho$, let $\mathbf{h}$ be the hyperplane such that $x \dashv_B\mathbf{h}$. If $z \in \mathbf{h}$, then we may take a differentiable curve $\gamma(t)\colon I\subseteq\mathbb{R}\rightarrow c\cdot\partial B$ such that $\gamma(0) = x$ and $\gamma'(0) = z$. Thus,
\begin{align*} \rho(x)\cdot d\rho_x(z) = \rho(x)\cdot\left.\frac{d}{dt}\rho\circ\gamma(t)\right|_{t=0} = 0 = \mathcal{L}(x)\cdot z,
\end{align*}
because $\rho\circ\gamma(t) = c$ for every $t$. Finally, we calculate
\begin{align*} \rho(x)\cdot d\rho_x(x) = \rho(x)\cdot\left.\frac{d}{dt}\rho(x+tx)\right|_{t=0} = \rho(x)^2 = \mathcal{L}(x)\cdot x.
\end{align*}

\end{proof}

\begin{remark} Having the differentiability of the norm (except at the origin, of course) \emph{a priori}, the Legendre transform can be equivalently defined as
\begin{align*} \mathcal{L}(x)\cdot v := \frac{1}{2}\left.\frac{d}{dt}\rho(x+tv)^2\right|_{t=0},
\end{align*}
for any $x,v \in \mathbb{R}^n$. This approach was taken in \cite{alvarez}, for example. Even more, in \cite{horvath} the authors study the map $[v,x] := \mathcal{L}(x)\cdot v$, which they call a \emph{semi-inner product}. 
\end{remark}

To finish this section, we relate norm sub-differentials with normal cones of sub-level sets at boundary points. Let $K$ be a convex body, and $x$ be a boundary point. The (\emph{Birkhoff}) \emph{normal cone} of $K$ at $x$, denoted by $\mathrm{NC}(K,x)$, is the set of all outward pointing vectors which are Birkhoff left-orthogonal to some hyperplane that supports $K$ at $x$, together with the zero vector (for other normal cones appearing in the geometry of Banach spaces we refer the reader to \cite{sain}). Since an outward pointing unit Birkhoff left-orthogonal vector to some supporting hyperplane of $K$ at $x$ is called an outer normal, and since Birkhoff orthogonality is homogeneous, we have
\begin{align*} \mathrm{NC}(K,x) = \mathbb{R}^+\cdot\{\mathrm{outer \ normals \ of} \ K \ \mathrm{at} \ x\},
\end{align*}
where $\mathbb{R}^+\cdot A := \{\lambda a:\lambda \geq 0 \ \mathrm{and} \ a \in A\}$. A consequence of Theorem \ref{normsubgradgeom} is that the inclusion
\begin{align*} \mathbb{R}^+\cdot \partial f(x)\subseteq \mathrm{NC}(\{f\leq c\},x)
\end{align*}
holds whenever $f\colon\mathbb{R}^n\rightarrow\mathbb{R}$ is a convex function and $f(x) = c$ is not a global minimum. Under these same hypotheses, Theorem \ref{subgradientchar} gives the reverse inclusion:
\begin{align*} \mathrm{NC}(\{f\leq c\},x) \subseteq \mathbb{R}^+\cdot \partial f(x).
\end{align*}
As it was mentioned before, the norm sub-differential is the pull-back of a (convex) set of linear functionals in $(\mathbb{R}^n)^*$ by the Legendre transform:
\begin{align*} \partial f(x) = \mathcal{L}^{-1}\Big(\{\phi \in (\mathbb{R}^n)^*:f(y) - f(x) \geq \phi(y-x) \ \ \forall \ y \in \mathbb{R}^n\}\Big),
\end{align*} 
and the same holds for the Birkhoff normal cone, as the next lemma shows.

\begin{lemma}\label{functionalcone} Let $K\subseteq\mathbb{R}^n$ be a convex body, and let $x \in \partial K$ be a boundary point. Then
\begin{align*} \mathrm{NC}(K,x) = \mathcal{L}^{-1}\Big(\{\phi \in (\mathbb{R}^n)^* : \phi(y-x) \leq 0 \ \ \forall \ y \in K\}\Big). 
\end{align*}
\end{lemma}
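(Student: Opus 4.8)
The plan is to exploit the way the Legendre transform converts Birkhoff left-orthogonality of a nonzero vector $v$ to a hyperplane $\mathbf{h}$ into the statement that $\ker\mathcal{L}(v)$ equals $\mathbf{h}$ (translated through the origin), while the normalization $\mathcal{L}(v)\cdot v = ||v||^2 > 0$ fixes the orientation. First I would observe that both sides of the claimed identity are cones containing the origin, so that it suffices to argue for nonzero vectors. The right-hand side is closed under nonnegative scaling because $\mathcal{L}$ is homogeneous of degree one and $\lambda\phi$ is nonpositive on $K-x$ whenever $\phi$ is and $\lambda \geq 0$; it contains $\mathcal{L}^{-1}(0) = 0$ upon taking $\phi = 0$. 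The left-hand side is a cone containing $0$ directly from the definition of $\mathrm{NC}(K,x)$ together with the homogeneity of Birkhoff orthogonality.

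For the inclusion $\subseteq$, I would take a nonzero $v \in \mathrm{NC}(K,x)$, so that $v$ is outward pointing and $v \dashv_B \mathbf{h}$ for some hyperplane $\mathbf{h}$ supporting $K$ at $x$. Setting $\phi := \mathcal{L}(v)$, the defining property of the Legendre transform together with the smoothness assumption (which makes the left-orthogonal hyperplane unique) gives that $\ker\phi$ is exactly $\mathbf{h}$ through the origin, so $x + \ker\phi$ is the supporting hyperplane at $x$. Since $\phi\cdot v = ||v||^2 > 0$, the functional $\phi$ is positive along $v$; because $v$ points away from $K$, the body lies in the closed half-space $\{y : \phi(y-x) \leq 0\}$, which is precisely the membership condition for the right-hand side.

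For the reverse inclusion $\supseteq$, I would start with a nonzero $\phi$ satisfying $\phi(y-x) \leq 0$ for all $y \in K$ and put $v := \mathcal{L}^{-1}(\phi) \neq 0$. The half-space inequality shows that the affine hyperplane $\mathbf{h} := x + \ker\phi$ supports $K$ at $x$, and since $\ker\phi = \ker\mathcal{L}(v)$ is the hyperplane Birkhoff right-orthogonal to $v$, we obtain $v \dashv_B \mathbf{h}$. Finally $\phi\cdot v = ||v||^2 > 0$ places $v$ strictly on the side of $\mathbf{h}$ opposite to $K$, so $v$ is outward pointing; hence $v \in \mathrm{NC}(K,x)$.

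The only delicate point is the orientation bookkeeping, namely checking that \emph{outward pointing} corresponds to the sign $\phi \leq 0$ on $K-x$ rather than $\phi \geq 0$, and that the zero vector is correctly accounted for on both sides. This is exactly where the normalization $\mathcal{L}(v)\cdot v = ||v||^2$ does the work, since it forces $\phi$ to be positive along $v$ and thereby places $v$ and $K$ on opposite sides of the common supporting hyperplane. Once this is settled, the argument reduces to the elementary duality between a supporting half-space at $x$ and the outward direction Birkhoff left-orthogonal to its boundary.
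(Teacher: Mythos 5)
Your proof is correct and takes essentially the same approach as the paper: both inclusions rest on the facts that $\ker\mathcal{L}(v)$ is the (unique, by smoothness) hyperplane Birkhoff right-orthogonal to $v$ and that the normalization $\mathcal{L}(v)\cdot v = ||v||^2 > 0$ fixes the orientation, so that the supporting half-space condition $\phi \leq 0$ on $K - x$ matches the outward direction. The only cosmetic difference is in the reverse inclusion, where you verify directly that $v := \mathcal{L}^{-1}(\phi)$ lies in $\mathrm{NC}(K,x)$, while the paper starts from a unit outward vector $v \dashv_B \ker(\phi)$ and identifies $\phi = \mathcal{L}(\lambda v)$ with $\lambda > 0$; the content is identical.
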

\begin{proof} Let $v \in \mathrm{NC}(K,x)$. Of course, there is nothing to prove for the case $v = 0$, and hence we may assume that $v$ is a non-zero vector. If $\mathbf{h}$ is the hyperplane such that $v \dashv_B \mathbf{h}$, then $\mathbf{h}$ supports $K$ at $x$. Denote $\phi = \mathcal{L}(v)$. If $y \in K$, then $y$ does not lie in the same half-space determined by $\mathbf{h}$ as $v$ (recall that $v$ is an outer normal), and hence we may write
\begin{align*} y = \alpha v + z,
\end{align*} 
for some $z \in \mathbf{h}$ and some $\alpha \leq 0$. Thus,
\begin{align*} \phi(y-x) = \mathcal{L}(v)\cdot(y-x) = \mathcal{L}(v)\cdot(\alpha v+z) = \alpha||v||^2 \leq 0,
\end{align*}
and this shows the inclusion ``$\subseteq$". Now assume that $\phi$ is a non-zero functional and observe that if $\phi(y-x) \leq 0$ for every $y \in K$, then $\mathbf{h}:=\mathrm{ker}(\phi)$ supports $K$ at $x$. Then, if $v$ is a unit and outward pointing vector such that $v \dashv_B \mathbf{h}$, it follows that $\phi = \mathcal{L}(\lambda v)$ for some $\lambda > 0$. This gives the reverse inclusion. 

\end{proof}

\begin{remark} The ``functional versions" of normal cones and sub-differentials can be used to investigate the case where the norms are not smooth or strictly convex. In this direction, we refer the reader to \cite{penot}.  
\end{remark}

\section{Differentiability of distance functions}\label{distdiff}

Throughout this section we will \textbf{always} assume that the involved norms are smooth and strictly convex. Let $K\subseteq(\mathbb{R}^n,||\cdot||)$ be a convex body which is not necessarily smooth or strictly convex. For a given $x \in \mathbb{R}^n\setminus K$ the metric projection $p_K(x) \in \partial K$ is unique, and hence the outer normal $\eta_K(x)$ defined as
\begin{align*} \eta_K(x) := \frac{x-p_K(x)}{||x-p_K(x)||}
\end{align*}
is unique. Denote by $d_K$ the distance function to $K$, defined as
\begin{align*} d_K(x) := \mathrm{dist}(x,K).
\end{align*}
In the next theorem we discuss the differentiability of $d_K$.

\begin{teo} The function $d_K$ is differentiable in $\mathbb{R}^n\setminus K$. Moreover, we have
\begin{align*} \nabla d_K(x) = \eta_K(x)
\end{align*}
for any $x \in \mathbb{R}^n\setminus K$.
\end{teo}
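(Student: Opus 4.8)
The plan is to show that the distance function $d_K$ has a single norm sub-gradient at each exterior point $x$, namely $\eta_K(x)$, and then invoke Corollary \ref{singletondiff} to conclude both differentiability and the identity $\nabla d_K(x) = \eta_K(x)$. We already know from Proposition \ref{distanceconvex} that $d_K$ is convex, so the machinery of norm sub-differentials applies. The value $c := d_K(x) = ||x - p_K(x)|| > 0$ is not a global minimum (the minimum, zero, is attained on $K$), so we are in the non-trivial regime where Theorems \ref{normsubgradgeom} and \ref{subgradientchar} characterize sub-gradients geometrically.

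First I would identify the relevant sub-level set and its supporting hyperplane. The sub-level set $\{d_K \leq c\}$ is exactly the parallel body $K + cB$, and since $x = p_K(x) + c\,\eta_K(x)$, Proposition \ref{birkhoffparallelset} tells us that $\eta_K(x)$ is a Birkhoff normal vector of $K + cB$ at $x$. Because the norm is smooth, this parallel body is smooth, so it has a \emph{unique} supporting hyperplane $\mathbf{h}$ at $x$, and $\eta_K(x) \dashv_B \mathbf{h}$. By Theorem \ref{normsubgradgeom}, every norm sub-gradient $v \in \partial d_K(x)$ must be Birkhoff left-orthogonal to this unique hyperplane; by strict convexity of the norm, the Birkhoff left-orthogonal direction to $\mathbf{h}$ is unique, so every sub-gradient is a non-negative multiple of $\eta_K(x)$. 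This pins down the direction; it remains to pin down the magnitude.

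To fix the magnitude, I would compute the one-sided directional derivative of $d_K$ in the direction $\eta_K(x)$ and apply the estimates \eqref{estimatesub} of Theorem \ref{normsubgradgeom}. The key computation is that moving from $x$ radially outward along $\eta_K(x)$ increases the distance at unit rate: for small $t > 0$, Proposition \ref{propsun} (the ``sun'' property) guarantees $p_K(x + t\,\eta_K(x)) = p_K(x)$, whence
\begin{align*}
d_K(x + t\,\eta_K(x)) = ||x + t\,\eta_K(x) - p_K(x)|| = ||(c+t)\,\eta_K(x)|| = c + t,
\end{align*}
using $||\eta_K(x)|| = 1$. This gives $(d_K)'_+(x, \eta_K(x)) = 1$, and together with the contraction property $|d_K(y) - d_K(x)| \leq ||x - y||$ it forces any sub-gradient $v = \lambda\,\eta_K(x)$ to satisfy $||v||^2 = \lambda^2 = 1$ via the characterization in Theorem \ref{subgradientchar} (or Lemma \ref{maxlemma}), so $\lambda = 1$. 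Hence $\partial d_K(x) = \{\eta_K(x)\}$ is a singleton.

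The \textbf{main obstacle} I anticipate is verifying the estimates \eqref{estimatesub} cleanly in the tangential directions $z \in \mathbf{h}$, i.e.\ confirming that $||\,\eta_K(x)\,||^2 = 1$ genuinely lies between the supremum of the left derivatives and the infimum of the right derivatives of $d_K$ along $\eta_K(x) + z$. The radial computation handles the direction $\eta_K(x)$ itself, but controlling the behavior in tangent directions requires the weak-contraction bound and the smoothness of the parallel body to ensure the one-sided derivatives do not overshoot $1$; this is where the hypotheses on the norm are essential. Once the singleton sub-differential is established, Corollary \ref{singletondiff} delivers differentiability and the formula $\nabla d_K(x) = \eta_K(x)$ immediately, and the continuity of $x \mapsto \eta_K(x)$ (which follows from Proposition \ref{metricprojectioncontinuous}, since $p_K$ is continuous) would further upgrade this to a $C^1$ statement via Remark \ref{continuousequiv}.
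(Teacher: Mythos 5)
Your overall strategy is exactly the paper's: identify $\{d_K \leq c\}$ with the parallel body $K + cB$, use smoothness of $K+cB$ together with Theorem \ref{normsubgradgeom} and Propositions \ref{outernormal} and \ref{birkhoffparallelset} (plus strict convexity) to force every sub-gradient into $\mathrm{span}\{\eta_K(x)\}$, compute the outward radial derivative via the sun property, and finish with Corollary \ref{singletondiff}. However, the step that pins the magnitude is wrong as stated. Write $v = \lambda\,\eta_K(x) \in \partial d_K(x)$. Lemma \ref{maxlemma} with $u = \eta_K(x)$ and your computation $(d_K)'_+(x,\eta_K(x)) = 1$ give $\lambda \leq 1$. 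The contraction property adds nothing here: for a $1$-Lipschitz function every norm sub-gradient automatically satisfies $\mathcal{L}(v)\cdot u \leq d_K(x+u)-d_K(x) \leq ||u||$ for all $u$, hence $||v|| = ||\mathcal{L}(v)||_* \leq 1$ --- an upper bound again, in the same direction. So ``radial derivative $=1$ together with contraction'' cannot force $\lambda^2 = 1$; nothing you wrote rules out, say, $\lambda = 1/2$.

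What is missing is the lower bound $\lambda \geq 1$, and it comes from the \emph{inward} radial direction, not from the Lipschitz property. Either observe that for $0 < t < c$ the point $x - t\,\eta_K(x) = p_K(x) + (c-t)\,\eta_K(x)$ still lies on the ray $p_K(x) + \mathbb{R}^+\eta_K(x)$, so Proposition \ref{propsun} gives $d_K(x - t\,\eta_K(x)) = c - t$ and hence $(d_K)'_+(x,-\eta_K(x)) = -1$; then Lemma \ref{maxlemma} with $u = -\eta_K(x)$ yields $-1 \geq \mathcal{L}(\lambda\,\eta_K(x))\cdot(-\eta_K(x)) = -\lambda$, i.e.\ $\lambda \geq 1$. (This is precisely what the paper does.) Or, even more cheaply, plug $y = p_K(x)$ into the sub-gradient inequality: $-c = d_K(p_K(x)) - d_K(x) \geq \mathcal{L}(\lambda\,\eta_K(x))\cdot(-c\,\eta_K(x)) = -\lambda c$, again giving $\lambda \geq 1$. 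With this repaired the argument closes. Note also that your anticipated ``main obstacle'' --- verifying the tangential estimates \eqref{estimatesub} along $\eta_K(x)+z$, $z \in \mathbf{h}$ --- never arises: you only need that every sub-gradient equals $\eta_K(x)$, because non-emptiness of $\partial d_K(x)$ is already guaranteed by Theorem \ref{maxtheorem}; the sufficient condition of Theorem \ref{subgradientchar} is never invoked.
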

\begin{proof} First observe that, for each $c > 0$, the sub-level set $\{d_K \leq c\}$ is precisely the convex body $K + cB$. Assume that $d_K(x) = c$, and let $\mathbf{h}$ be the (unique) supporting hyperplane of $K+cB$ at $x$. Assume that $v \in \partial d_K(x)$. Due to Theorem \ref{normsubgradgeom}, the hyperplane which is right-orthogonal to $v$ supports $K+cB$ at $x$, and hence $v \dashv_B \mathbf{h}$. From Propositions \ref{outernormal} and \ref{birkhoffparallelset} we get that $v$ is a multiple of $\eta_K(x)$. It follows that $\partial d_K(x) \subseteq \mathrm{span}\{\eta_K(x)\}$. From Proposition \ref{propsun} we have that 
\begin{align*} d_K(x+t\eta_K(x)) = d_K(x) + t,
\end{align*}
for $t \in \mathbb{R}$ small enough. From that equality we get immediately that
\begin{align*} (d_K)'_+(x,\eta_K(x)) = 1 = -(d_K)'_+(x,-\eta_K(x)).
\end{align*}
Now let $\alpha\eta_K(x) \in \partial d_K(x)$. Lemma \ref{maxlemma} implies that
\begin{align*} 1 = (d_K)'_+(x,\eta_K(x)) \geq \mathcal{L}(\alpha\eta_K(x))\cdot\eta_K(x) = \alpha||\eta_K(x)||^2 = \alpha.
\end{align*}
Finally, we have
\begin{align*} -1 = (d_K)'_+(x,-\eta_K(x)) \geq \mathcal{L}(\alpha\eta_K(x))\cdot(-\eta_K(x)) = -\alpha||\eta_K(x)||^2 = -\alpha.
\end{align*}
It follows that $\alpha = 1$. Hence $\partial d_K(x) = \{\eta_K(x)\}$, that is, the norm sub-differential of $d_K$ at $x$ is a singleton. From Corollary \ref{singletondiff} we get that $d_K$ is differentiable at $x$, and $\nabla d_K(x) = \eta_K(x)$. 

\end{proof}

\begin{coro} The norm gradient $\nabla d_K$ of the distance function $d_K$ is continuous in $\mathbb{R}^n\setminus K$. In particular, $d_K$ is a function of class $C^1$. 
\end{coro}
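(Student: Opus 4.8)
The plan is to read off the formula $\nabla d_K(x) = \eta_K(x)$ just established in the preceding theorem and to verify that the right-hand side, viewed as a map of $x$, is continuous on the open set $\mathbb{R}^n\setminus K$. Recall that
\begin{align*}
\eta_K(x) = \frac{x - p_K(x)}{||x - p_K(x)||}\,,
\end{align*}
so the whole question reduces to the continuity of the metric projection $p_K$ together with the fact that the denominator does not vanish away from $K$.

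First I would note that the standing assumption of this section --- that the norm is smooth and strictly convex --- guarantees, via the corollary to the uniqueness proposition, that the metric projection onto $K$ is unique at every point; that is, $p_K$ is well-defined. Proposition~\ref{metricprojectioncontinuous} then yields that $p_K\colon\mathbb{R}^n\rightarrow K$ is continuous. Consequently the numerator $x\mapsto x-p_K(x)$ is continuous, and since $x\notin K$ forces $||x-p_K(x)|| = d_K(x) > 0$, the denominator is a continuous, strictly positive function on $\mathbb{R}^n\setminus K$. A quotient of continuous maps with non-vanishing denominator is continuous, so $\eta_K$, and hence $\nabla d_K$, is continuous on $\mathbb{R}^n\setminus K$.

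To pass from continuity of the norm gradient to the class-$C^1$ conclusion, I would appeal to Remark~\ref{continuousequiv}: by Corollary~\ref{singletondiff} the differential of $d_K$ at a point of differentiability is $df_x = \mathcal{L}(\nabla d_K(x))$, and since both the Legendre transform and its inverse are continuous, continuity of $\nabla d_K$ on $\mathbb{R}^n\setminus K$ is equivalent to continuity of $x\mapsto df_x$ there, which is exactly the statement that $d_K$ is of class $C^1$ on $\mathbb{R}^n\setminus K$. I do not anticipate any genuine obstacle here: all of the analytic weight has already been carried by the uniqueness of the projection (strict convexity) and by Proposition~\ref{metricprojectioncontinuous}, so the argument amounts to combining these inputs with the explicit formula for $\nabla d_K$ and with the continuity of the Legendre transform. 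The only point meriting care is to record that the conclusion is $C^1$-regularity on the \emph{open} set $\mathbb{R}^n\setminus K$, where $d_K$ was shown to be differentiable, rather than on all of $\mathbb{R}^n$ (the function typically fails to be differentiable on $\partial K$).
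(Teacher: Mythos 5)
Your proposal is correct and follows essentially the same route as the paper: both express $\eta_K(x)$ via the identity $x = p_K(x) + \mathrm{dist}(x,K)\cdot\eta_K(x)$, invoke Proposition~\ref{metricprojectioncontinuous} for the continuity of $p_K$ (together with that of $d_K$), and then pass to the $C^1$ conclusion through Remark~\ref{continuousequiv}. Your added remarks --- that strict convexity guarantees $p_K$ is well-defined, and that the denominator $d_K(x)>0$ off $K$ --- are points the paper leaves implicit, but they do not change the argument.
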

\begin{proof} This follows immediately from the equality
\begin{align*} x = p_K(x) + \mathrm{dist}(x,K)\cdot\eta_K(x),
\end{align*}
which holds for any $x \in\mathbb{R}^n\setminus K$. Since the distance function $\mathrm{dist}(\cdot,K)$ and the metric projection $p_K$ are continuous functions in $\mathbb{R}^n\setminus K$ (see Proposition \ref{metricprojectioncontinuous}) we get that $\eta_K(x)$ is continuous in $\mathbb{R}^n\setminus K$. Consequently, $\nabla d_K$ is continuous in $\mathbb{R}^n\setminus K$, and from Remark \ref{continuousequiv} we get that $d_K$ is a function of class $C^1$ in $\mathbb{R}^n\setminus K$. 
 
\end{proof}

Observe that in the interior of $K$ we clearly have that $d_K$ is differentiable, and $\nabla d_K(x) = \{0\}$. In fact, one just has to recall that $d_K$ is constant in $K$. Next we investigate what happens on the boundary of $K$. We show that $d_K$ is not differentiable on $\partial K$, and characterize its norm sub-differential at these points. 

\begin{teo} For each $x \in \partial K$ we have
\begin{align*} \partial d_K(x) = \mathrm{NC}(K,x).
\end{align*}
In particular, $d_K$ is not differentiable at a boundary point of $K$.
\end{teo}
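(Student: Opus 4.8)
The plan is to prove the set equality by two inclusions, exploiting that for $x\in\partial K$ we have $d_K(x)=0$, which is the global minimum of $d_K$ (since $d_K\ge 0$ everywhere); in particular $0\in\partial d_K(x)$, matching the zero vector of the cone. For the inclusion $\partial d_K(x)\subseteq\mathrm{NC}(K,x)$ I would take $v\in\partial d_K(x)$ and test the defining inequality of the norm sub-gradient against points of $K$: for every $y\in K$ one has $d_K(y)=0$, so $0=d_K(y)-d_K(x)\ge\mathcal{L}(v)\cdot(y-x)$, i.e. $\mathcal{L}(v)\cdot(y-x)\le 0$ for all $y\in K$. By Lemma \ref{functionalcone} this is precisely the condition defining $\mathrm{NC}(K,x)$, so $v\in\mathrm{NC}(K,x)$.

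For the reverse inclusion $\mathrm{NC}(K,x)\subseteq\partial d_K(x)$ I would argue via the metric projection. The case $v=0$ is immediate, so let $v\in\mathrm{NC}(K,x)$ be nonzero and fix an arbitrary $y\in\mathbb{R}^n$ with metric projection $p=p_K(y)$. Splitting
\begin{align*} \mathcal{L}(v)\cdot(y-x)=\mathcal{L}(v)\cdot(y-p)+\mathcal{L}(v)\cdot(p-x), \end{align*}
the second summand is $\le 0$ by Lemma \ref{functionalcone} (as $p\in K$), while the first is controlled using that $\mathcal{L}$ is norm-preserving: $\mathcal{L}(v)\cdot(y-p)\le\|\mathcal{L}(v)\|_*\,\|y-p\|=\|v\|\,d_K(y)$. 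Hence $\mathcal{L}(v)\cdot(y-x)\le\|v\|\,d_K(y)$, and the required sub-gradient inequality $d_K(y)\ge\mathcal{L}(v)\cdot(y-x)$ follows once the factor $\|v\|$ is controlled.

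The main obstacle is precisely this control of the magnitude $\|v\|$ in the reverse inclusion. The projection estimate only yields $\mathcal{L}(v)\cdot(y-x)\le\|v\|\,d_K(y)$, so to close the argument one must bring in the weak-contraction property $|d_K(y)-d_K(x)|\le\|y-x\|$: testing the sub-gradient inequality along short segments $y=x+tu$ with $t\to 0^+$ forces $\mathcal{L}(v)\cdot u\le\|u\|$ for every $u$, hence $\|v\|\le 1$. The heart of the proof is therefore the interplay between the norm-preserving property of $\mathcal{L}$, the weak-contraction bound on $d_K$, and the homogeneity of $\mathrm{NC}(K,x)$, and reconciling the admissible magnitudes $\|v\|$ with this Lipschitz constraint is the delicate point where I would concentrate the effort.

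Finally, the non-differentiability assertion is a direct consequence of Corollary \ref{singletondiff}. Since $x\in\partial K$ admits a supporting hyperplane, $\mathrm{NC}(K,x)$ contains the zero vector together with a nonzero outer normal of $K$ at $x$; hence $\partial d_K(x)$ is not a singleton, and by Corollary \ref{singletondiff} the function $d_K$ cannot be differentiable at $x$.
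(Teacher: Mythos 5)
Your inclusion $\partial d_K(x)\subseteq\mathrm{NC}(K,x)$ is correct and coincides with the paper's argument. The genuine gap is in the reverse inclusion, and it cannot be repaired, because the step where you propose to ``control $\|v\|$'' is circular: testing the sub-gradient inequality along $y=x+tu$ presupposes that $v$ already satisfies that inequality, i.e.\ that $v\in\partial d_K(x)$, whereas $v$ is an arbitrary element of $\mathrm{NC}(K,x)$ --- a cone, hence containing vectors of every norm. What that test actually proves is a constraint on sub-gradients: since $d_K$ is a weak contraction and $\mathcal{L}$ is norm-preserving, every $v\in\partial d_K(x)$ satisfies $\|v\|\le 1$. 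This constraint shows that the reverse inclusion, and hence the stated equality, is false: no $v\in\mathrm{NC}(K,x)$ with $\|v\|>1$ can be a norm sub-gradient. Concretely, for the Euclidean norm in the plane, $K$ the unit disk, $x=(1,0)$ and $v=(2,0)\in\mathrm{NC}(K,x)$, taking $y=(2,0)$ gives $d_K(y)-d_K(x)=1<2=\mathcal{L}(v)\cdot(y-x)$.

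On the positive side, your projection-splitting argument is sound and, for $\|v\|\le 1$, already closes the argument: $\mathcal{L}(v)\cdot(y-x)\le\|v\|\,d_K(y)\le d_K(y)-d_K(x)$, with no further control needed. Together with the Lipschitz constraint above, your work therefore proves the corrected statement $\partial d_K(x)=\{v\in\mathrm{NC}(K,x):\|v\|\le 1\}$, i.e.\ the normal cone truncated by the unit ball. This also exposes an error in the paper's own proof of the reverse inclusion: writing $y-x=\lambda\theta v+z$, the paper computes $\mathcal{L}(\theta v)\cdot(y-x)=\lambda\theta$, but homogeneity of $\mathcal{L}$ gives $\lambda\theta^2$, so its claim that $\theta v\in\partial d_K(x)$ for all $\theta>0$ is justified only for $\theta\le 1$ and fails for $\theta>1$. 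Your non-differentiability conclusion survives intact: $0$ and a unit outer normal both lie in $\partial d_K(x)$, so the sub-differential is not a singleton and Corollary \ref{singletondiff} applies.
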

\begin{proof} The fact that $0 \in \partial d_K(x)$ follows from the property that $d_K(x) = 0$ is the global minimum of $d_K$. Now let $v$ be a (unit) outer normal of $K$ at $x$, and assume that $\mathbf{h}$ is the supporting hyperplane of $K$ at $x$ such that $v \dashv_B\mathbf{h}$. Since $d_K(x+tv) = t$ for $t \geq 0$, we have that $(d_K)'_+(x,v) = 1$. If $\theta \in (0,+\infty)$, then we will prove that $\theta v \in \partial d_K(x)$. Let $y \in \mathbb{R}^n$, and write $y = \lambda\theta v + z+ x$ for some $z \in \mathbf{h}$ and some $\lambda \in \mathbb{R}$. First, assume that $\lambda > 0$. From (\ref{ineqconvex1}) we have
\begin{align*} d_K(\lambda\theta v+z+x) - d_K(x) \geq (d_K)'_+(x,\lambda\theta v+z).
\end{align*}
Also, we claim that $(d_K)'_+(x,\lambda\theta v + z) \geq (d_K)'_+(x,\lambda\theta v)$. Indeed, observe that  $d_K(x+ tv) = t$ for any $t > 0$, and that Proposition \ref{birkhoffparallelset} implies that $z$ is a supporting direction of $\{d_K \leq t\}$ at $x+tv$. Hence $d_K(x+\alpha v+\beta z) \geq d_K(x+\alpha v)$ for any $\alpha > 0$ and $\beta \in \mathbb{R}$. Consequently,
\begin{align*} (d_K)'_+(x,\lambda\theta v + z) = \lim_{\varepsilon\rightarrow 0^+}\frac{d_K(x+\varepsilon(\lambda\theta v+z))-d_K(x)}{\varepsilon} \geq \lim_{\varepsilon\rightarrow 0^+}\frac{d_K(x+\varepsilon\lambda\theta v)-d_K(x)}{\varepsilon} = \\ = (d_K)'_+(x,\lambda\theta v).
\end{align*}
Since $(d_K)'_+(x,\lambda\theta v) = \lambda\theta(d_K)'_+(x,v) = \lambda\theta$, we get
\begin{align*} d_K(y) - d_K(x) \geq (d_K)'_+(x,\lambda\theta v+z) \geq \lambda\theta.
\end{align*} 
On the other hand, we have that
\begin{align*} \mathcal{L}(\theta v)\cdot(y-x) = \mathcal{L}(\theta v)\cdot(\lambda v+z) = \lambda\theta||v||^2 = \lambda\theta,
\end{align*}
and this concludes the case $\lambda > 0$. If $\lambda \leq 0$, then we recall again that $d_K(x)= 0$ is the global minimum of $d_K$, and write
\begin{align*} \mathcal{L}(\theta v)\cdot(y-x) = \mathcal{L}(\theta v)\cdot(\lambda v + z) = \lambda\theta||v||^2 = \lambda\theta \leq 0 \leq d_K(y) - d_K(x). 
\end{align*}
This shows that $\theta v \in \partial d_K(x)$. Hence we have the inclusion $\partial d_K(x)\supseteq\mathrm{NC}(K,x)$. Now assume that $v \in \partial d_K(x)$. For any $y \in K$ we have
\begin{align*} 0 = d_K(y)-d_K(x) \geq \mathcal{L}(v)\cdot(y-x).
\end{align*}
Consequently, we get from Lemma \ref{functionalcone} that $v \in \mathrm{NC}(K,x)$. This gives the remaining inclusion $\partial d_K(x) \subseteq \mathrm{NC}(K,x)$. 

\end{proof}

\end{document}